\tikzset{>=stealth}
\def\BState{\State\hskip-\ALG@thistlm}
\theoremstyle{plain}
\newtheorem{theorem}{Theorem}
\newtheorem{lemma}{Lemma}
\newtheorem{corollary}{Corollary}
\newtheorem{proposition}{Proposition}
\newtheorem{conjecture}{Conjecture}
\theoremstyle{remark}
\newtheorem{remark}{Remark}
\theoremstyle{definition}
\newtheorem{definition}{Definition}
\newtheorem{example}{Example}
\newcommand{\reals}{\mathbb{R}}
\newcommand{\vect}[1]{\mbox{\boldmath$#1$}}
\newcommand{\fBeta}{\mathcal{F}_\beta}
\newcommand{\hBeta}{\mathcal{H}_\beta}
\newcommand{\whBeta}{\mathcal{WH}_\beta}
\newcommand{\supportgraph}{\mathcal{S}}
\newcommand{\ini}{\mathcal{N}^{-}(i)}
\newcommand{\outi}{\mathcal{N}^{+}(i)}
\newcommand{\inone}{\mathcal{N}^{-}(1)}
\newcommand{\outone}{\mathcal{N}^{+}(1)}
\newcommand{\nodeset}{\ensuremath{V}}
\newcommand{\arcset}{\ensuremath{E}}
\DeclareMathOperator{\rank}{rank}
\DeclareMathOperator{\indeg}{deg^{in}}
\DeclareMathOperator{\outdeg}{deg^{out}}
\newcommand{\expect}[1]{\textbf{E}\left(#1\right)}
\let\leq\leqslant
\let\geq\geqslant
\title{Hamiltonian cycles and subsets of discounted occupational measures}
\author[A. Eshragh]{Ali Eshragh$^1$}
\address{$^{1}$School of Mathematical and Physical Sciences, University of Newcastle, NSW,
  Australia}
\author[J.A. Filar]{Jerzy A. Filar$^2$}
\address{$^2$School of Mathematics and Physics, University of Queensland, QLD, Australia}
\thanks{The second author wishes to acknowledge the support from the Australian Research Council
  under grants DP150100618 and DP160101236.}
\author[T. Kalinowski]{Thomas Kalinowski$^{1,3}$}
\address{$^3$School of Science and Technology, University of New England, NSW, Australia}
\author[S. Mohammadian]{Sogol Mohammadian$^1$}
\email[A.~Eshragh]{\{ali.eshragh,sogol.mohammadian\}@newcastle.edu.au}
\email[J.A.~Filar]{j.filar@uq.edu.au}
\email[T.~Kalinowski]{t.kalinow@une.edu.au}
\date{\today}
\begin{document}

\begin{abstract}
  We study a certain polytope arising from embedding the Hamiltonian cycle problem in a discounted
  Markov decision process. The Hamiltonian cycle problem can be reduced to finding particular
  extreme points of a certain polytope associated with the input graph. This polytope is a subset of
  the space of discounted occupational measures. We characterize the feasible bases of the polytope
  for a general input graph $G$, and determine the expected numbers of different types of feasible
  bases when the underlying graph is random. We utilize these results to demonstrate that augmenting
  certain additional constraints to reduce the polyhedral domain can eliminate a large number of
  feasible bases that do not correspond to Hamiltonian cycles. Finally, we develop a random walk
  algorithm on the feasible bases of the reduced polytope and present some numerical results. We
  conclude with a conjecture on the feasible bases of the reduced polytope.  
\end{abstract}

\keywords{Hamiltonian cycle, discounted occupational measures, feasible basis, random graph, random walk}
\subjclass[2010]{90C40, 90C35; secondary: 05C80, 05C81}

\maketitle

\section{Introduction.}\label{section:intro}
One of the classical problems of combinatorial mathematics is the \emph{Hamiltonian Cycle Problem}
(HCP), named after the Irish mathematician, Sir William Rowan Hamilton. He designed the Icosian
Game. To win this game, a player must visit each of twenty specifically connected cities,
represented by holes on a wooden pegboard, exactly once and return to the starting point. The
Hamiltonian cycle problem is a mathematically generalized version of this game. Given a graph $G$,
the aim is either to find a cycle that passes through every node of $G$ exactly once, or to
determine that no such cycle exists. Cycles that pass through every node of the graph exactly
once are called \emph{Hamiltonian cycles}. If the graph contains at least one Hamiltonian cycle,
then it is called \emph{Hamiltonian}. Otherwise, it is
\emph{non-Hamiltonian}. Figures~\ref{fig:Hamilton_nonHamilton_graph}(\subref{fig:Hamilton_graph})\, and~\ref{fig:Hamilton_nonHamilton_graph}(\subref{fig:nonHamilton_graph}) show
examples of Hamiltonian and non-Hamiltonian graphs on five nodes.

\begin{figure}[htb]
  \begin{subfigure}[b]{.49\textwidth}
    \centering
    \begin{tikzpicture}[xscale=1.5,every node/.style={draw,shape=circle,outer sep=1pt,inner
        sep=1pt,minimum size=.15cm}]
		\node (1) at (0,0) {1};
		\node (2) at (1,0) {2};
		\node (3) at (1.5,-1) {3};
		\node (4) at (.5,-1) {4};
		\node (5) at (-.5,-1) {5};		
		
		\draw[thick,<->] (1) -- (2);
                \draw[thick,<->] (2) -- (3);
                \draw[thick,<->] (3) -- (4);
                \draw[thick,<->] (4) -- (5);
                \draw[thick,<->] (5) -- (1);
                \draw[thick,<->] (1) -- (4);
              \end{tikzpicture}
              \caption{A Hamiltonian graph}\label{fig:Hamilton_graph}
            \end{subfigure}
	\begin{subfigure}[b]{.49\textwidth}
		\centering
		\begin{tikzpicture}[xscale=1.5,every node/.style={draw,shape=circle,outer sep=1pt,inner sep=1pt,minimum size=.15cm}]
		\node (1) at (0,0) {1};
		\node (2) at (1,0) {2};
		\node (3) at (1.5,-1) {3};
		\node (4) at (.5,-1) {4};
		\node (5) at (-.5,-1) {5};

		\draw[thick,<->] (1) -- (2);
		\draw[thick,<->] (2) -- (3);
		\draw[thick,<->] (2) -- (4);
		\draw[thick,<->] (4) -- (5);
		\draw[thick,<->] (5) -- (1);
		\draw[thick,<->] (1) -- (4);		
		\end{tikzpicture}
		\caption{A non-Hamiltonian graph}\label{fig:nonHamilton_graph}
	\end{subfigure}
	\caption{Hamiltonian and non-Hamiltonian graphs}\label{fig:Hamilton_nonHamilton_graph}
\end{figure}
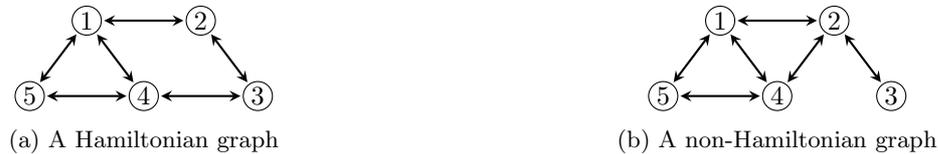

Despite originating in the 1850s, HCP continues to generate a great deal of research interest. The
similarity between HCP and the famous Traveling Salesman Problem (TSP) makes it interesting from a
combinatorial optimization viewpoint. As TSP aims to find a route of minimal distance for a salesman
who starts from a home location, visits every city exactly once and returns to the home location,
HCP can be considered a special case of this problem. To see this, for a given graph $G$, add
artificial arcs $(i,j)$ for every pair $(i,j)$ of distinct nodes which are not connected by an arc in $G$. If we assign distance one to
each original arc and distance two to each artificial arc, the graph $G$ is Hamiltonian if and only if
there is a shortest route in the modified graph with the total distance of $n$.

In 1976, \citet{garey1976planar} showed that HCP is an NP-complete problem.  In particular, no-one
has found a solution algorithm with polynomial worst-case runtime. Its simple appearance, however,
has captivated many researchers, giving rise to a rich literature presenting theoretical and
computational results about this problem. In particular, TSP has been one of the major driving
forces for the development of polyhedral theory for combinatorial optimization. The TSP polytope is
the convex hull of the Hamiltonian cycles of a complete graph, and the great success with solving large
instances of this problem is based on the deep insights into the geometry and combinatorics of this
polytope (see \cite{applegate2011traveling}).

Arguably, one of the central problems in theoretical Operations
Research is TSP. While TSP has received a lot of attention in the
literature, much of its difficulty lies in the ‘visit all nodes
exactly once’ constraint that is captured mathematically in HCP. Since
HCP is known to be NP-complete, probabilistic embedding approaches
such as the line of research continued in the present paper offers a
pathway towards deeper understanding of the essential difficulty of
both HCP and TSP.


In 1994, \citet{filar1994hamiltonian} proposed a new approach to HCP, embedding it into a
\emph{Markov Decision Process} (MDP).  An MDP comprises a state space, an action space, a function
of transition probabilities between states (conditioned on the actions taken by the decision maker)
and a reward function. In the MDP's basic setting, the decision maker takes an action, receives a
reward from the environment, and the environment changes its state. Next, the decision maker
identifies the state of the environment, takes a further action, obtains a reward, and so forth. The
state transitions are probabilistic, and depend solely on the actual state and the action taken by
the decision maker. The reward obtained by the decision maker depends on the action taken, and on
the current state of the environment. The decision maker's actions in each environmental state are
prescribed by a policy.  Markov decision processes are applicable to a wide range of optimization
problems. The model introduced in \citet{filar1994hamiltonian} instigated a new line of research,
which has attracted growing attention (see, for example,
\cite{avrachenkov2016transition,Borkar2011,ejov2004interior,ejov2009refined,ejov2008determinants,Ejov2011,feinberg2000constrained,Filar2015,litvak2009markov}).

In 2000, \citet{feinberg2000constrained} investigated the relationship between HCP and
\emph{discounted} MDPs. In discounted MDPs, a discount factor $\beta \in (0,1)$, which represents
the difference in importance between future and present rewards, is used to discount
rewards. Feinberg showed that HCP can be viewed as a discounted MDP with constraints. Part of his
proof involved the construction of a new polytope corresponding to a given graph $G$, which we shall
refer to as $\fBeta(G)$. The polytope $\fBeta(G)$ is a subset of the space of discounted
occupational measures induced by the constrained discounted MDP. Feinberg showed that if the graph
$G$ is Hamiltonian, the polytope $\fBeta(G)$ has an extreme point, called a \emph{Hamiltonian
  extreme point}, for each of its Hamiltonian cycles. Subsequently, \citet{ejov2009refined}
described some geometric properties of $\fBeta(G)$ and \citet{eshragh2011hybrid} transformed
$\fBeta(G)$ to a polytope $\hBeta(G)$ to improve algorithmic efficiency. In 2011,
\citet{eshragh2011hamiltonian} partitioned all extreme points of $\hBeta(G)$ into five types,
consisting of Hamiltonian extreme points and non-Hamiltonian extreme points of types 1, 2, 3 and 4. They
constructed a new polytope $\whBeta(G)$ by adding new linear constraints, called \emph{wedge
  constraints}, to the polytope $\hBeta(G)$. They showed that, when the discount factor $\beta$ is
sufficiently close to one, the wedge constraints remove the, typically, most abundant
non-Hamiltonian extreme points of types 2,3 and 4, while preserving the Hamiltonian extreme points.

In this paper, we develop geometric properties of the feasible bases of the polytope $\hBeta(G)$ and
characterize them. Moreover, we find the expected number of feasible bases associated with
Hamiltonian extreme points as well as each type of non-Hamiltonian extreme point in the random polytope
$\hBeta(G_{n,p})$ for an input binomial random graph $G_{n,p}$. We show that in expectation, the
feasible bases corresponding to non-Hamiltonian extreme points of Type 4 are the majority. Motivated
by the results of \citet{eshragh2011hamiltonian}, we construct two algorithms based on a simple
random walk on feasible bases of the two polytopes $\hBeta(G)$ and $\whBeta(G)$. We use
these two random walks to compare the numbers of feasible bases associated with Hamiltonian extreme
points in these two polytopes, and explore the efficiency of the wedge constraints. While computational
experiments confirm our analytical results on the feasible bases of the polytope $\hBeta(G)$, they
reveal that the wedge constraints improve algorithmic efficiency for values of the discount
factor $\beta$ sufficiently close to $1$. All these theoretical and computational results support a
new conjecture on the feasible bases of the polytope $\whBeta(G)$ stated at the end of the paper.

The remainder of this paper is organized as follows: In Section~\ref{section:formulation}\,, we
review some preliminary results and introduce the polytope $\hBeta(G)$. In
Section~\ref{section:geometric_proerties}\,, we characterize the feasible bases of $\hBeta(G)$. In
Section~\ref{section:prevelance}\,, we investigate the expected prevalence of feasible bases of
$\hBeta(G_{n,p})$ for an input binomial random graph. More precisely, we find the expected number of each type of 
feasible basis in the random polytope $\hBeta(G_{n,p})$. In
Section~\ref{section:reducing_region}\,, we discuss the results of \cite{eshragh2011hybrid, eshragh2011hamiltonian}
concerning $\whBeta(G)$. In Section~\ref{section:numerical_result}\,, we investigate the
efficiency of this polytope by developing and running two different random walk
algorithms on feasible bases of the polytope $\whBeta(G)$. In
Section~\ref{sec:conclusion}, we conclude with a conjecture on the
feasible bases of the polytope $\whBeta(\bar{G}_{n,p})$, where $\bar{G}_{n,p}$ is a Hamiltonian binomial random graph.
%
%

\section{Formulation of HCP through discounted MDPs.}\label{section:formulation}
Consider a directed graph $G=(\nodeset,\arcset)$, where $\nodeset=\{1,2,\dots,n\}$ is the set of
nodes and $\arcset$ is the set of arcs. Throughout this paper, $G$ refers to such a directed graph
on $n$ nodes, unless otherwise stated. For each node $i\in \nodeset$, we define the in-neighborhood
set, $\ini$, and the out-neighborhood set, $\outi$, of $i$ by
\[\ini=\{j\in \nodeset: (j,i)\in \arcset\}, \hspace*{1cm} \outi=\{j\in \nodeset:(i,j)\in \arcset\}.\]
For a node $i\in\nodeset$, its \emph{in-degree}, denoted by $\indeg(i)$, is the cardinality of
$\ini$, and its \emph{out-degree}, denoted by $\outdeg(i)$, is the cardinality of
$\outi$. The \emph{degree} of $i\in\nodeset$, denoted by $\deg(i)$ is the total number of arcs
incident with $i$, that is, $\deg(i)=\indeg(i)+\outdeg(i)$.

Let us consider a discounted Markov decision process with state space $\{1,2,\dots,n\}$, action
sets $\mathcal{A}(i)$, for each state $i$, and a discount factor $\beta \in (0,1)$. The embedding
of the original graph $G$ in such an MDP is based on a one-to-one correspondence of nodes of $G$
with the states of MDP and actions in state $i$ with arcs emanating from node $i$. That is,
$\mathcal{A}(i) = \outi$ for $i = 1,2,\dots,n$. The Markovian transition probabilities normally
accompanying an MDP are particularly simple in this embedding. Namely, choice of an action
corresponding to arc $(i,j)$ results in the transition from state $i$ to state $j$, with probability
one.

Using the above embedding, \citet{feinberg2000constrained} converted HCP to a constrained
discounted MDP and showed that finding a Hamiltonian cycle is equivalent to finding a structured
extreme point of a certain polytope that we shall call the Feinberg polytope $\fBeta(G)$. This
result is made precise in Theorem~\ref{Fienberg_thm}, below.

\begin{theorem}[\citet{feinberg2000constrained}]\label{Fienberg_thm}
  Consider the embedding of the graph $G=(\nodeset,\arcset)$ in a constrained discounted MDP with a discount factor $\beta$ and the
  polytope $\fBeta(G)$ characterized by
  \begin{align}
    \sum_{j \in \outone} y_{1j} - \beta \sum_{j \in \inone} y_{j1} &= 1,\label{eq_f:flow_conservation_1}\\
    \sum_{j \in \outi} y_{ij} - \beta \sum_{j \in \ini} y_{ji} &= 0 && \textup{for all } i \in \nodeset \setminus \{1\},\label{eq_f:flow_conservation_i}\\
    \sum_{j \in \outone} y_{1j} & = \frac{1}{1 - \beta^n},\label{eq_f:flow_injection}\\
    y_{ij}&\geq 0 &&\textup{for all }(i,j) \in \arcset.\label{eq_f:nonnegativity}
  \end{align} 
  The graph $G$ is Hamiltonian if and only if there exists an extreme point of $\fBeta(G)$ that has
  exactly $n$ positive coordinates tracing out a Hamiltonian cycle in $G$.
\end{theorem}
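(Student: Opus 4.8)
The proof splits into the two implications, with essentially all of the substance in the ``only if'' direction. The ``if'' direction is merely an unpacking of the statement: if $\fBeta(G)$ possesses an extreme point whose positive coordinates---exactly $n$ of them---are the arcs of a Hamiltonian cycle of $G$, then $G$ contains a Hamiltonian cycle and is therefore Hamiltonian.

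For the converse, assume $G$ is Hamiltonian and, starting at node $1$, write a Hamiltonian cycle as $H=(v_0,v_1,\dots,v_{n-1},v_n)$ with $v_0=v_n=1$ and $\{v_0,\dots,v_{n-1}\}=\nodeset$. The natural candidate vertex is the normalized discounted occupational measure of the deterministic stationary policy that follows $H$: set $y^\ast_{v_tv_{t+1}}=\beta^{t}/(1-\beta^{n})$ for $t=0,1,\dots,n-1$ and $y^\ast_{ij}=0$ for every other arc $(i,j)\in\arcset$. I would first verify that $y^\ast\in\fBeta(G)$. Nonnegativity~\eqref{eq_f:nonnegativity} is clear since $\beta\in(0,1)$. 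At a node $v_s$ with $1\le s\le n-1$ the only nonzero terms in the flow-balance relation come from the out-arc $(v_s,v_{s+1})$, of weight $\beta^{s}/(1-\beta^{n})$, and the in-arc $(v_{s-1},v_s)$, of weight $\beta^{s-1}/(1-\beta^{n})$, so the left-hand side of~\eqref{eq_f:flow_conservation_i} is $\beta^{s}/(1-\beta^{n})-\beta\cdot\beta^{s-1}/(1-\beta^{n})=0$; at node $v_0=1$ the out-arc $(v_0,v_1)$ has weight $1/(1-\beta^{n})$ and the in-arc $(v_{n-1},v_0)$ has weight $\beta^{n-1}/(1-\beta^{n})$, so the left-hand side of~\eqref{eq_f:flow_conservation_1} is $1/(1-\beta^{n})-\beta\cdot\beta^{n-1}/(1-\beta^{n})=(1-\beta^{n})/(1-\beta^{n})=1$; and~\eqref{eq_f:flow_injection} holds since the unique out-arc of node $1$ carries weight $1/(1-\beta^{n})$.

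It then remains to show that $y^\ast$ is an extreme point of $\fBeta(G)$. Since $\fBeta(G)$ is presented in standard form, a feasible point is a vertex if and only if the columns of the equality-constraint matrix indexed by its support are linearly independent, so it suffices to check this for the $n$ columns corresponding to the arcs of $H$. In the column of arc $(v_t,v_{t+1})$ the flow-balance row of $v_t$ has entry $+1$ and the flow-balance row of $v_{t+1}$ has entry $-\beta$; positing a linear dependence of these columns with coefficients $c_0,\dots,c_{n-1}$ and reading off the flow-balance row of node $v_s$ therefore gives $c_s-\beta c_{s-1}=0$ for every $s$ (indices modulo $n$). Iterating this relation around the cycle yields $c_s=\beta^{n}c_s$, whence $c_s=0$ for all $s$ because $\beta\in(0,1)$. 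Hence $y^\ast$ is an extreme point with exactly $n$ positive coordinates, and these coordinates, being the arcs of $H$, trace out a Hamiltonian cycle of $G$; this establishes the ``only if'' direction and completes the proof.

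The one genuinely delicate point, I expect, is locating the correct candidate vertex---precisely what the Markov decision process embedding supplies, since $y^\ast$ is the discounted state--action frequency vector of the stationary policy induced by $H$, under which node $1$ recurs exactly at times $0,n,2n,\dots$, pinning the right-hand side of~\eqref{eq_f:flow_injection} to $1/(1-\beta^{n})$. With the candidate in hand the remaining work is routine: the feasibility check is a one-line telescoping computation, and the vertex check reduces to the observation that the cyclic recursion $c_s=\beta c_{s-1}$ has only the trivial solution when $\beta\in(0,1)$; in particular, no analysis of the other extreme points of $\fBeta(G)$ is needed for this statement.
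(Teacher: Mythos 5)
Your proof is correct, but there is nothing in the paper to compare it against: Theorem~\ref{Fienberg_thm} is quoted from \citet{feinberg2000constrained} without proof, the authors remarking only that Feinberg ``exploited MDP properties'' of discounted occupational measures, i.e.\ the original argument runs through the correspondence between extreme points of such polytopes and deterministic stationary policies of the embedded MDP. Your argument replaces that machinery with a direct, self-contained linear-programming verification: the explicit candidate $y^\ast_{v_tv_{t+1}}=\beta^{t}/(1-\beta^{n})$ is checked against \eqref{eq_f:flow_conservation_1}--\eqref{eq_f:nonnegativity} by a telescoping computation, and its extremality is reduced to the linear independence of the $n$ columns supported on the cycle, which follows because the cyclic recursion $c_s=\beta c_{s-1}$ forces $c_s=\beta^{n}c_s$ and hence $c_s=0$ for $\beta\in(0,1)$. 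Both steps are sound, your identification of the ``if'' direction as vacuous is right (the statement builds ``tracing out a Hamiltonian cycle'' into the hypothesis), and the standard vertex criterion you invoke for standard-form polytopes is the correct one. What the MDP route buys in Feinberg's original treatment is the additional structural information that this paper relies on downstream --- e.g.\ how constraint \eqref{eq_f:flow_injection} forces the first return to state $1$ to occur at time exactly $n$, and the classification of \emph{all} extreme points that feeds into Theorem~\ref{thm:Ejov} --- but for the literal biconditional stated here, your more elementary argument suffices.
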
  
In MDP literature the polyhedral domain defined by
constraints~\eqref{eq_f:flow_conservation_1}, \eqref{eq_f:flow_conservation_i} and
\eqref{eq_f:nonnegativity} is called \emph{the space of discounted occupational measures}. These
spaces have been studied extensively (see, for example,
\cite{altman1999constrained,kallenberg1983linear,puterman2014markov}). Indeed,
\citet{feinberg2000constrained} exploited MDP properties of these measures to prove
Theorem~\ref{Fienberg_thm}.

\citet{eshragh2011hybrid} transformed the polytope $\fBeta(G)$ by changing
variables $x_{ij}:= (1-\beta^n)y_{ij}$ for all $(i,j) \in \arcset$ to produce the polytope
$\hBeta(G) \subseteq \reals^{\lvert \arcset \rvert}$ defined by the constraints
\begin{align}
\medskip
\sum_{j\in \outone} x_{1j}-\beta\sum_{j\in \inone}
x_{j1} & = 1-\beta^n,  \label{eq:flow_conservation_1}\\
\sum_{j\in \outi} x_{ij}-\beta\sum_{j\in \ini}
x_{ji} & = 0 &&\mbox{for all}\ i \in \nodeset \setminus\{1\},  \label{eq:flow_conservation_i}\\
\medskip \sum_{j\in \outone}x_{1j} & = 1, \label{eq:flow_injection} \\
x_{ij} & \geq 0 &&\mbox{for all}\ (i,j)\in \arcset. \label{eq:nonnegativity}
\end{align}
Since values of $\beta$ close to one were shown to be important in
\cite{eshragh2011hamiltonian}--\cite{eshragh2011hybrid}, this transformation eliminates numerical
instability in~\eqref{eq_f:flow_injection}. In the remainder of this paper, $A$ and $\vect b$ denote
the constraint matrix and the right-hand side vector of
constraints~\eqref{eq:flow_conservation_1}--\eqref{eq:flow_injection}, respectively. Indeed, $A$
and $\vect b$ depend on the parameter $\beta$. However, for simplicity and because
we do not consider more than one value of $\beta$ at a time, we do not make this dependence on
$\beta$ explicit. The following definition is motivated directly from Theorem~\ref{Fienberg_thm}.
\begin{definition}\label{def:Ham_EP}
	Let $\vect x$ be an extreme point of the polytope $\mathcal{H}_\beta(G)$. If the positive
	coordinates of $\vect x$ trace out a Hamiltonian cycle in the graph $G$, $\vect x$ is called
	a Hamiltonian extreme point. Otherwise, it is called a non-Hamiltonian extreme point.
\end{definition}

As an example, let us construct the polytope $\hBeta(G)$ for the graph given in
Figure~\ref{fig:Hamilton_nonHamilton_graph}(\subref{fig:Hamilton_graph}):
\begin{align*}
x_{12} + x_{14} + x_{15} - \beta (x_{21}+x_{41}+x_{51}) &= 1 - \beta^5\\
x_{21} + x_{23} - \beta (x_{12} + x_{32}) &= 0\\
x_{32} + x_{34} - \beta (x_{23} + x_{43}) &= 0\\
x_{41} + x_{43} + x_{45} - \beta (x_{14} + x_{34} + x_{54}) &= 0\\
x_{51} + x_{54} - \beta (x_{15} + x_{45}) &= 0\\
x_{12} + x_{14} + x_{15} &= 1\\
x_{12},x_{14},x_{15},x_{21},x_{23},x_{32},x_{34},x_{41},x_{43},x_{45},x_{51},x_{54} & \geq 0.
\end{align*}
It can be written in the form $A\vect{x} = \vect{b}$, $\vect{x} \geq\vect 0$ as follows:
\begin{align*}
\begin{pmatrix}
1& 1& 1 &-\beta&0&0&0&-\beta&0&0&-\beta&0 \\
-\beta & 0& 0 & 1 & 1 & -\beta & 0 & 0 & 0  & 0 & 0 & 0\\
0      & 0      & 0 & 0 & -\beta & 1 & 1 & 0 & -\beta & 0  & 0 &0\\
0      & -\beta & 0 & 0 & 0 & 0 & -\beta & 1 & 1  & 1&0&-\beta\\
0      & 0      & -\beta & 0 & 0 & 0 & 0 & 0 & 0  & -\beta&1&1\\
1      & 1      & 1 & 0 & 0 & 0 & 0 & 0 & 0  & 0 & 0 &0    	
\end{pmatrix}
\begin{pmatrix}
x_{12}\\
x_{14}\\
x_{15}\\
x_{21}\\
x_{23}\\
x_{32}\\
x_{34}\\
x_{41}\\
x_{43}\\
x_{45}\\
x_{51}\\
x_{54}
\end{pmatrix}&=
\begin{pmatrix}
1 - \beta ^5\\
0\\
0\\
0\\
0\\
1
\end{pmatrix}\\\medskip
x_{12},x_{14},x_{15},x_{21},x_{23},x_{32},x_{34},x_{41},x_{43},x_{45},x_{51},x_{54} & \geq 0.
\end{align*}
It is easy to see that
\begin{align*}
\begin{cases}
 x_{12} = 1, x_{23} = \beta, x_{34} = \beta^2, x_{45} =\beta^3,x_{51} = \beta^4\\
 x_{14} = x_{15} = x_{21} = x_{32} =x_{41}=x_{43} = x_{54} = 0
\end{cases}
\end{align*}
is an extreme point of $\hBeta(G)$. Furthermore, the set of arcs
$\{ (1,2), (2,3), (3,4), (4,5), (5,1) \}$, corresponding to the non-zero variables, traces out a
Hamiltonian cycle in the graph $G$.

Theorem~\ref{Fienberg_thm} shows that the polytope $\hBeta(G)$ reflects some properties of
the Hamiltonian cycles in $G$ and, thus, can be utilized as the basis for a procedure to search for
Hamiltonian cycles. This procedure is outlined in Algorithm 1.
\begin{algorithm*}
	\caption{Search Algorithm for HCP}
	\begin{algorithmic}[1]
	\State \textbf{Input:} a graph $G$
	\State Search for a Hamiltonian extreme point among the extreme points of $\hBeta(G)$
	\If{search is successful}
	\State output the Hamiltonian extreme point
	\Else
	\State claim that $G$ is not Hamiltonian\label{claim:nonHamiltonian}
	\EndIf	
	\end{algorithmic}\label{alg:HC_search}
\end{algorithm*}

We note that this is only a framework of an algorithm. In order to derive a practical algorithm, it
still needs to be specified how the search in Line 2 of Algorithm~\ref{alg:HC_search} is implemented. Depending on
this implementation, the resulting algorithm can have quite different properties. In particular, if we search by sampling extreme points, we obtain a randomized search algorithm
which, with a certain probability, will claim that the input graph $G$ is non-Hamiltonian, although
in fact it contains a Hamiltonian cycle. Thus, one may try to alleviate this error probability by
designing a clever random search algorithm. The motivation of our work is to better understand the
properties of random walk based sampling methods in this context. Since each extreme point can be
identified with the set of its corresponding feasible bases. Hence, instead of sampling the extreme
points of the polytope $\hBeta(G)$ we can sample its feasible bases. This yields an efficient
algorithm provided two technical conditions are satisfied which can be informally stated as follows.
\begin{enumerate}
\item There are sufficiently many extreme points (or feasible bases) corresponding to Hamiltonian
  cycles, and 
\item The random walk used of sampling converges to the uniform distribution quickly enough.
\end{enumerate}
The first condition ensures that the error probability is small. More precisely, we can bound the
probability that among $t$ uniform samples there is none that corresponds to a Hamiltonian
cycle. The second condition allows us to bound the number of random walk steps to obtain an
approximately uniform sample (see~\cite{jerrum2003counting} for more details and precise
statements).

In order to establish the two conditions listed above for a random walk on the feasible bases of
$\hBeta(G)$, it is crucial to understand the structure of these feasible bases. The study of the
structure of the extreme points of $\hBeta(G)$ has been initiated in
\cite{ejov2009refined,eshragh2011hamiltonian}. However, these references did not consider the
structure of the feasible bases. In Section~\ref{section:geometric_proerties}, we characterize
feasible bases of the polytope $\hBeta(G)$, and describe their structural and geometric properties.


\section{Feasible bases of the polytope $\mathcal{H}_\beta(G)$.}\label{section:geometric_proerties}
In this section, we consider feasible bases of the polytope $\hBeta(G)$. At the outset, we recall
some basic notation and definitions of polytopes, in general.

\begin{definition}
  Let $\mathcal{P} \subseteq \reals^\eta$ be a polytope defined by the constraints
  $M\vect{x} = \vect{v}$, $\vect{x}\geq\vect 0$, where $M$ is a $\kappa\times \eta$ matrix of rank
  $\kappa\leq\eta$, and $\vect v$ is an $\eta\times 1$ column vector. An extreme point (or vertex)
  of $\mathcal{P}$ is a point $\vect{x} \in \mathcal{P}$ with the property that $M$ has a
  nonsingular $\kappa\times \kappa$-submatrix $M_B$ such that the components of $\vect{x}$
  corresponding to columns not in $M_B$ are zero, and those components corresponding to columns in
  $M_B$ are given by $M_B^{-1}\vect{v}$. For a given submatrix $M_B$ and the corresponding extreme
  point $\vect x$, those components of $\vect x$ associated with the columns of $M_B$ are called
  \emph{basic variables}. The set of all $\kappa$ basic variables is called a \emph{feasible
    basis}. Two distinct feasible bases are adjacent if and only if they have exactly $\kappa-1$
  common basic variables. An extreme point is called \emph{degenerate} if it has less than $\kappa$
  non-zero components. Otherwise, that is, if it has exactly $\kappa$ non-zero components, it is
  \emph{non-degenerate}.
\end{definition} 
\begin{remark}
  If all extreme points are non-degenerate, then there exists a one-to-one correspondence between
  feasible bases and extreme points. Otherwise, some extreme points may be associated with more than
  one feasible basis. Indeed, such degeneracy is generic in applications.
\end{remark}

The polytope $\hBeta(G)$, where $\beta \in (0,1)$, is generated by $n+1$ linearly independent
constraints. Accordingly, each feasible basis of $\hBeta(G)$ has $n+1$ basic variables. As the
columns of the constraint matrix $A$ can be identified with the arcs of $G$, characterizing the
feasible bases of $\hBeta(G)$ is equivalent to characterizing the arc sets $B \subseteq \arcset$
with $\lvert B \rvert = n+1$ satisfying the following two conditions:
 
\begin{description}
\item[B1]\label{item:B1} The columns of the $(n+1)\times (n+1)$ matrix $A_B$ are linearly independent, where
  $A_B$ is constructed by choosing those columns of $A$
  corresponding to $B$. 	
\item[B2]\label{item:B2} The inequality $(A_B)^{-1} \vect b \geq \vect 0$ is satisfied,  which is
  equivalent to~(\ref{eq:nonnegativity}).
\end{description}
Henceforth, we use condition \textbf{B1} and `the set $B$ is linearly independent', interchangeably.

We recall some results pertaining to the structure of extreme points of $\hBeta(G)$. For this, we
present some definitions.
\begin{definition}
  Let $\vect{x}$ be an extreme point of $\hBeta(G)$. The \emph{support} of $\vect x$ is defined to
  be the set of its non-zero coordinates. Since the variables $x_{ij}$ correspond to arcs of $G$,
  the support of $\vect x$ can be identified with a subgraph of $G$, which we denote by
  $\supportgraph(G,\vect x)$. More precisely, $\supportgraph (G,\vect x)$ is the graph with node set
  $\nodeset$ and arc set $\{(i,j)\in\arcset\,:\,x_{ij}>0\}$. Since there is no danger of ambiguity,
  we use the term \emph{support of $\vect x$} to refer to the graph $\supportgraph(G,\vect x)$, the
  arc set of this graph, or the set of variables $x_{ij}$ corresponding to these arcs.
\end{definition}
According to Definition~\ref{def:Ham_EP}, an extreme point $\vect{x}$ is a Hamiltonian extreme point
if and only if the support $\supportgraph(G,\vect x)$ is a Hamiltonian cycle in $G$. Furthermore,
\citet{ejov2009refined} showed that, for a non-Hamiltonian extreme point $\vect{x}$,
$\supportgraph(G,\vect x)$ has a specific structure. In order to state their result, which is
summarized in Theorem~\ref{thm:Ejov}, we need to define some important paths and cycles in
$G$.
\begin{definition}[\citet{ejov2009refined}]
  A cycle of the form $1\rightarrow v_1 \rightarrow \dotsb\rightarrow v_k \rightarrow 1$ with
  distinct nodes $v_i$ $(i = 1, \dotsc, k)$ and $k < n-1$ is called a \emph{short cycle}. A path of
  the form $1 \rightarrow v_1 \rightarrow v_2 \rightarrow \dotsb\rightarrow v_k \rightarrow v_j$
  with distinct nodes $v_i$ $(i = 1, \dotsc , k)$ and $1 \leq j < k < n$ is called a \emph{noose
    path}. The cycle $v_j \rightarrow v_{j+1}\rightarrow \dots \rightarrow v_k \rightarrow v_j$ is
  the associated \emph{noose cycle}.
\end{definition}
\begin{example}
  For $n=5$, the arcs $(1,2)$, $(2,3)$ and $(3,1)$ form a short cycle, and the arcs $(1,2)$,
  $(2,3)$, $(3,4)$ and $(4,2)$ form a noose path with the associated noose cycle $\{(2,3),(3,4),(4,2)\}$.
\end{example}
\begin{theorem}[\citet{ejov2009refined}]\label{thm:Ejov}
  If $\vect{x}$ is a non-Hamiltonian extreme point of the polytope $\hBeta(G)$, then the support $\supportgraph(G,\vect{x})$ is the union of a short cycle and a
  noose path. In particular, this implies that the support of a non-Hamiltonian extreme point
  has a unique node of out-degree two, called the ``splitting node'', and all other nodes have
  out-degrees at most one.
\end{theorem}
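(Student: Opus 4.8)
The plan is to work directly with \eqref{eq:flow_conservation_1}--\eqref{eq:nonnegativity}, reading $\vect x$ as a discounted flow on $H:=\supportgraph(G,\vect x)$, and to recover the combinatorial shape of $H$ from three ingredients: the local equations at each node, a rank bound for the constraint matrix restricted to $H$, and the single extra equation \eqref{eq:flow_injection}. Throughout, $\vect x$ is a non-Hamiltonian extreme point, so the columns of $A$ indexed by the arcs of $H$ are linearly independent.

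First I would extract the local consequences. Equation \eqref{eq:flow_injection} gives $\sum_{j\in\outone}x_{1j}=1>0$, and together with \eqref{eq:flow_conservation_1} it yields $\sum_{j\in\inone}x_{j1}=\beta^{n-1}>0$, so node $1$ has both an outgoing and an incoming arc in $H$. For $i\neq 1$, \eqref{eq:flow_conservation_i} says $\sum_{j\in\outi}x_{ij}=\beta\sum_{j\in\ini}x_{ji}$, and since every $x_{ij}$ with $(i,j)\in H$ is strictly positive this means that in $H$ node $i$ has an outgoing arc if and only if it has an incoming arc; hence every non-isolated node of $H$ has in- and out-degree at least one. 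Summing \eqref{eq:flow_conservation_i} over a set $U\subseteq\nodeset\setminus\{1\}$ that no arc of $H$ enters leaves only $(1-\beta)\sum_{(i,j)\in H:\,i,j\in U}x_{ij}+\sum_{(i,j)\in H:\,i\in U,\,j\notin U}x_{ij}=0$, and as $\beta<1$ and $\vect x\geq\vect 0$ this forces $U$ to induce and emit no arc of $H$; applied to the set of nodes unreachable from node $1$ in $H$ it shows that every non-isolated node is reachable from $1$. Consequently the subgraph $D$ of $H$ spanned by its non-isolated nodes is weakly connected, contains node $1$, and has minimum in- and out-degree one, while the remaining nodes of $H$ are isolated and carry no arc.

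The second ingredient is a rank bound. Delete row \eqref{eq:flow_injection}: a left null vector $\vect p$ of the remaining submatrix of $A_H$ satisfies $p_i=\beta p_j$ along every arc $(i,j)\in H$, so traversing any cycle of $H$ of length $\ell$ yields $p_v=\beta^{\ell}p_v$, hence $p_v=0$, and propagating along its weakly connected component shows $\vect p$ vanishes on every component of $H$ that contains a cycle. Therefore the rank of this submatrix is $n$ minus the number of acyclic components of $H$, and the only acyclic components are the isolated nodes, because every non-isolated component contains a cycle (minimum out-degree one). Writing $r$ for the number of non-isolated nodes, this rank equals $r$, so $\rank A_H\leq r+1$; since the columns of $A_H$ are linearly independent, $\lvert E(H)\rvert=\rank A_H\leq r+1$, while $\lvert E(H)\rvert\geq r$ because each of the $r$ nodes of $D$ has out-degree at least one. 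If $\lvert E(H)\rvert=r$ then every node of $D$ has in- and out-degree exactly one, so $D$ is a single cycle through node $1$ of some length $\ell$, and writing flow conservation around this cycle together with \eqref{eq:flow_injection} gives $1-\beta^{\ell}=1-\beta^{n}$, i.e.\ $\ell=n$ and $\vect x$ Hamiltonian, a contradiction. Hence $\lvert E(H)\rvert=r+1$, $D$ has exactly one more arc than nodes, and with minimum in- and out-degree one it follows that $D$ has exactly one node of out-degree two and exactly one of in-degree two; in particular $H$ has a unique splitting node and all other nodes have out-degree at most one, which proves the last sentence of the theorem.

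It remains to show that $D$ is the union of a short cycle and a noose path. A connected digraph with one more arc than nodes and minimum in- and out-degree one has, after deleting one of the two out-arcs at the splitting node and inspecting the resulting connected functional graph, one of two shapes: a cycle together with a directed path (a ``chord-path'') joining two of its nodes, or two cycles, possibly sharing a single node, joined by a directed path. The position of node $1$ in such a shape is then pinned down by the flow equations: propagating the forced values of $\vect x$ along the chains of out-degree-one nodes and imposing that the total flow returning to node $1$ equal $\beta^{n-1}$ (equivalently \eqref{eq:flow_injection}) shows that in every configuration in which no cycle of $D$ passes through node $1$, and in every configuration in which node $1$ lies on the ``outer'' cycle of a chord-path shape or is the shared node of two cycles, some $x_{ij}$ with $(i,j)\in H$ is forced to be nonpositive --- a contradiction. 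In the surviving configurations $D$ is the union of a cycle through node $1$, necessarily of length $<n$ since $\vect x$ is not Hamiltonian (a short cycle), and a $\rho$-shaped walk issued from node $1$ whose cyclic part avoids node $1$ (a noose path), completing the proof. I expect this case analysis to be the main obstacle: one must enumerate the few ways node $1$ can be placed in the two shapes, exhibit the short cycle and the noose path in each surviving case, and carry out the (routine but fiddly) computation of the forced flow and read off the sign contradiction in each excluded case. A cleaner route to this final step is available by noting that \eqref{eq:flow_conservation_1}, \eqref{eq:flow_conservation_i} and \eqref{eq:nonnegativity} define a scaled space of discounted occupational measures for the MDP started at node $1$, whose vertices are the deterministic policies, each supported on a $\rho$-shaped walk from node $1$; intersecting with the hyperplane \eqref{eq:flow_injection} forces $\vect x$, not being a vertex of that polytope (which would be Hamiltonian), onto one of its edges, hence to be a convex combination of two deterministic policies differing in a single state, and \eqref{eq:flow_injection} --- which fixes the discounted number of visits to node $1$ at $1/(1-\beta^{n})$ --- then forces one of the two $\rho$-walks to be a short cycle and the other a noose path.
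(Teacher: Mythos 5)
First, a point of order: the paper does not prove Theorem~\ref{thm:Ejov} at all --- it is imported verbatim from \citet{ejov2009refined} --- so there is no in-paper proof to compare yours against, and your attempt has to be judged on its own. The first two thirds of your argument are correct and essentially complete. The degree and reachability facts extracted from \eqref{eq:flow_conservation_1}--\eqref{eq:flow_injection}, the left-null-space computation giving $\rank A_H\leq r+1$ (your blanket phrase ``acyclic components'' should really be ``components with no unbalanced oriented cycle'', but the application is sound because $D$ contains a directed cycle), the exclusion of $\lvert E(H)\rvert=r$, and the conclusion that $D$ has $r+1$ arcs with exactly one node of out-degree two all check out. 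This rigorously establishes the ``in particular'' clause of the theorem.

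The genuine gap is in the last step, which is where the actual content of the theorem lives. Two specific problems. First, your criterion for which placements of node~$1$ get excluded in the theta shape is wrong as stated: if $C$ is the outer cycle, $Q$ the chord path from $s$ to $t$, and $B$ the segment of $C$ shared by the two directed cycles $C$ and $Q\cup B$, then node~$1$ lying on $C\setminus B$ is a perfectly legitimate Type~3 configuration (short cycle $C$, noose cycle $Q\cup B$), so ``node~$1$ on the outer cycle'' cannot be the thing you rule out; the correct exclusion is ``node~$1$ on the segment $B$ common to both directed cycles'' (together with the figure-eight centre), i.e.\ the configurations in which \emph{every} directed cycle of $D$ passes through node~$1$. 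Organising the case analysis around the wrong dichotomy would break the proof even if each flow computation were done. Second, your claim that the cycle through node~$1$ is ``necessarily of length $<n$ since $\vect x$ is not Hamiltonian'' conflates ``the support is not a Hamiltonian cycle'' with ``the support contains no Hamiltonian cycle'': the combinatorial shape ``Hamiltonian cycle plus one chord, all $n+1$ arcs positive'' survives your classification and must be killed by a separate flow argument (it is --- the forced solution on a Hamiltonian cycle already satisfies \eqref{eq:flow_conservation_1} and linear independence drives the chord variable to zero, cf.\ Proposition~\ref{prop:Hamilton_basis} --- but you never say so). Beyond these two errors, the decisive sign computations for the excluded cases are only asserted, not performed; since everything before this point only yields ``connected, $r+1$ arcs, one splitting node'', the theorem is not yet proved. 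Your closing MDP remark (occupational-measure vertices are deterministic policies, so a non-vertex extreme point of $\fBeta(G)$ sits on an edge and is a mixture of two policies differing in one state) is in fact the route taken in the cited literature and would be the cleaner way to finish, but as written it is a one-sentence gesture rather than an argument.
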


Figure~\ref{fig:supports} illustrates the supports for a Hamiltonian and
non-Hamiltonian extreme point of the polytope $\hBeta(G)$, where $G$ is the graph shown in
Figure~\ref{fig:Hamilton_nonHamilton_graph}(\subref{fig:Hamilton_graph}). More precisely, while the
support shown in Figure~\ref{fig:supports}(\subref{fig:HamiltonSupport}) corresponds to a
Hamiltonian extreme point with positive components $x_{12}= 1$, $x_{23} = \beta$, $x_{34}=\beta^2$, $x_{45}=\beta^3$ and
$x_{51}= \beta^4$, the support in Figure~\ref{fig:supports}(\subref{fig:nonHamilton_support}) is
associated with a non-Hamiltonian extreme point with positive components $x_{12}= 1- \beta^2$, $x_{14} = \beta^2$,
$x_{23}=\beta$, $x_{32}=\beta^2$, $x_{45}=\beta^3$ and $x_{51}= \beta^4$. In the latter graph, the
arc sets $\{(1,4), (4,5), (5,1)\}$, $\{(1,2),(2,3), (3,2)\}$ and $\{(2,3), (3,2)\}$ form the short
cycle, noose path and noose cycle, respectively. Furthermore, node $1$ is the splitting node for
this extreme point.

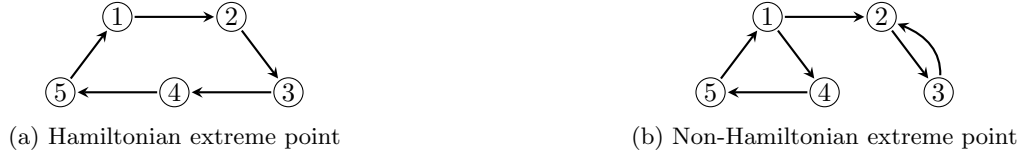
\begin{figure}
	\begin{subfigure}[b]{.49\textwidth}
		\centering
		\begin{tikzpicture}[xscale=1.5,every node/.style={draw,shape=circle,outer sep=1pt,inner sep=1pt,minimum size=.15cm}]
		\node (1) at (0,0) {1};
		\node (2) at (1,0) {2};
		\node (3) at (1.5,-1) {3};
		\node (4) at (.5,-1) {4};
		\node (5) at (-.5,-1) {5};

		\draw[thick,->] (1) -- (2);
		\draw[thick,->] (2) -- (3);
		\draw[thick,->] (3) -- (4);
		\draw[thick,->] (4) -- (5);
		\draw[thick,->] (5) -- (1);
		
		\end{tikzpicture}
		\caption{Hamiltonian extreme point}\label{fig:HamiltonSupport}
	\end{subfigure}
	\hfill
	\begin{subfigure}[b]{.49\textwidth}
		\centering
	\begin{tikzpicture}[xscale=1.5,every node/.style={draw,shape=circle,outer sep=1pt,inner sep=1pt,minimum size=.15cm}]
		\node (1) at (0,0) {1};
		\node (2) at (1,0) {2};
		\node (3) at (1.5,-1) {3};
		\node (4) at (.5,-1) {4};
		\node (5) at (-.5,-1) {5};

		\draw[thick,->] (1) -- (2);
		\draw[thick,->] (2) -- (3);
		\draw[thick,bend right=30,->] (3) to (2);
		\draw[thick,->] (1) -- (4);
		\draw[thick,->] (4) -- (5);
		\draw[thick,->] (5) -- (1);
		\end{tikzpicture}
		\caption{Non-Hamiltonian extreme point}\label{fig:nonHamilton_support}
	\end{subfigure}
	\caption{Supports of a Hamiltonian and a non-Hamiltonian extreme point for the graph in Figure~\ref{fig:Hamilton_nonHamilton_graph}(\subref{fig:Hamilton_graph}).}\label{fig:supports}
\end{figure}

Motivated by Theorem~\ref{thm:Ejov}, \citet{eshragh2011hamiltonian} partitioned the set of
non-Hamiltonian extreme points of the polytope $\hBeta(G)$ into four types, based on their
supports. More precisely, for a given non-Hamiltonian extreme point, if in the corresponding support:
\begin{description}
\item[Type 1] every node has in-degree at least one, the short cycle and the noose cycle are
  node-disjoint, and there is one arc connecting the splitting node (which lies on the short
  cycle) to a node on the noose cycle (Figure~\ref{fig:types}(\subref{fig:nonHamilton_type1})). 
\item[Type 2] every node has in-degree at least one, the short cycle and the noose cycle are
  node-disjoint, and they are linked by a path of length at least two, connecting the splitting node
  with a node on the noose cycle (Figure~\ref{fig:types}(\subref{fig:nonHamilton_type2})).
\item[Type 3] every node has in-degree at least one, and the short cycle and noose cycle have at
  least one node in common (Figure~\ref{fig:types}(\subref{fig:nonHamilton_type3})).
\item[Type 4] at least one node has degree zero (Figure~\ref{fig:types}(\subref{fig:nonHamilton_type4})).
\end{description}

\begin{remark}\label{rem:support_of_type4}
It follows from Theorem~\ref{thm:Ejov} that the support of each non-Hamiltonian extreme point
of Type 4 can be represented as a support of any of types 1--3 on less then $n$ nodes or a
short cycle on less than $n$ nodes with an extra arc $(i,j)$, where nodes $i$ and $j$ are on the short
cycle and node $i$ comes after node $j$.	
\end{remark}
  
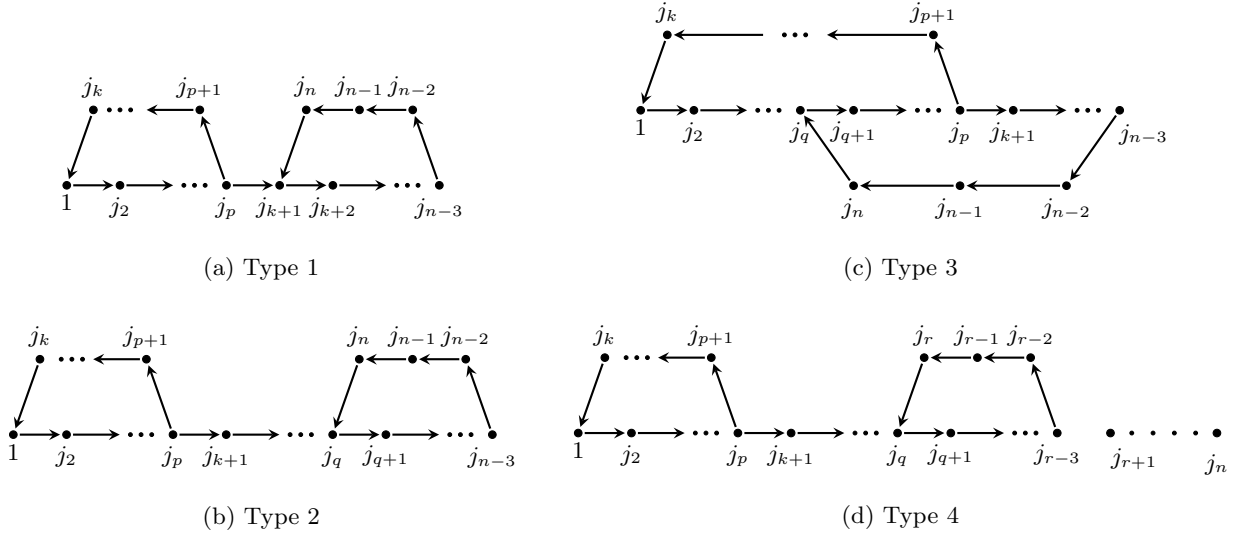
\begin{figure}
  \centering
  {\small
    \begin{minipage}[b]{.45\textwidth}
    \begin{subfigure}{\linewidth}
	\centering
	\begin{tikzpicture}[xscale=1.4,every node/.style={draw,fill=black,shape=circle,outer sep=1pt,inner sep=1pt,minimum size=.1cm}]
	
	\node (1)  at (0,0)    [label={[label distance=-2pt]-90:$1$}]{};
	\node (2)  at (.5,0)   [label={[label distance=-2pt]-90:$j_2$}]{};
	\node (3)  at (1.5,0)  [label={[label distance=-2pt]-90:$j_p$}]{};
	\node (4)  at (2,0)    [label={[label distance=-6pt]-90:$j_{k+1}$}]{};
	\node (5)  at (2.5,0)    [label={[label distance=-6pt]-90:$j_{k+2}$}]{};
	\node (7)  at (3.5,0)  [label={[label distance=-6pt]-90:$j_{n-3}$}]{};
	\node (8)  at (.25,1)  [label={[label distance=-2pt]90:$j_k$}]{};
	\node (9)  at (1.25,1) [label={[label distance=-6pt]90:$j_{p+1}$}]{};
	\node (10) at (2.25,1) [label={[label distance=-2pt]90:$j_n$}]{};
	\node (11) at (2.75,1) [label={[label distance=-6pt]90:$j_{n-1}$}]{};
	\node (12) at (3.25,1) [label={[label distance=-6pt]90:$j_{n-2}$}]{};
	
	\draw[thick,->] (1) -- (2);
	\draw[thick,->] (2) -- (1,0);
	\draw[thick,->] (3) -- (4);
	\draw[thick,->] (4) -- (5);
	\draw[thick,->] (5) -- (3,0);
	\draw[thick,->] (8) -- (1);
	\draw[thick,->] (9) -- (.75,1);
	\draw[thick,->] (7) -- (12);
	\draw[thick,->] (12) -- (11);
	\draw[thick,->] (11) -- (10);
	\draw[thick,->] (10) -- (4);
	\draw[thick,->] (3) -- (9);

	\node[outer sep=.1pt,inner sep=.5pt,minimum size=.01cm] at (1.1,0) {};
	\node[outer sep=.1pt,inner sep=.5pt,minimum size=.01cm] at (1.2,0) {};
	\node[outer sep=.1pt,inner sep=.5pt,minimum size=.01cm] at (1.3,0) {};
	
	\node[outer sep=.1pt,inner sep=.5pt,minimum size=.01cm] at (3.1,0) {};
	\node[outer sep=.1pt,inner sep=.5pt,minimum size=.01cm] at (3.2,0) {};
	\node[outer sep=.1pt,inner sep=.5pt,minimum size=.01cm] at (3.3,0) {};
	\node[outer sep=.1pt,inner sep=.5pt,minimum size=.01cm] at (.4,1) {};
	\node[outer sep=.1pt,inner sep=.5pt,minimum size=.01cm] at (.5,1) {};
	\node[outer sep=.1pt,inner sep=.5pt,minimum size=.01cm] at (.6,1) {};
      \end{tikzpicture}
      \caption{Type 1}\label{fig:nonHamilton_type1}		
    \end{subfigure}
    
      \bigskip
		\begin{subfigure}{\linewidth}
			\centering
		\begin{tikzpicture}[xscale=1.4,every node/.style={draw,fill=black,shape=circle,outer sep=1pt,inner sep=1pt,minimum size=.1cm}]
		
		\node (1)  at (0,0)    [label={[label distance=-2pt]-90:$1$}]{};
		\node (2)  at (.5,0)   [label={[label distance=-2pt]-90:$j_2$}]{};
		\node (3)  at (1.5,0)  [label={[label distance=-2pt]-90:$j_p$}]{};
		\node (4)  at (2,0)    [label={[label distance=-6pt]-90:$j_{k+1}$}]{};
		\node (5)  at (3,0)    [label={[label distance=-2pt]-90:$j_q$}]{};
		\node (6)  at (3.5,0)  [label={[label distance=-6pt]-90:$j_{q+1}$}]{};
		\node (7)  at (4.5,0)  [label={[label distance=-6pt]-90:$j_{n-3}$}]{};
		\node (8)  at (.25,1)  [label={[label distance=-2pt]90:$j_k$}]{};
		\node (9)  at (1.25,1) [label={[label distance=-6pt]90:$j_{p+1}$}]{};
		\node (10) at (3.25,1) [label={[label distance=-2pt]90:$j_n$}]{};
		\node (11) at (3.75,1) [label={[label distance=-6pt]90:$j_{n-1}$}]{};
		\node (12) at (4.25,1) [label={[label distance=-6pt]90:$j_{n-2}$}]{};
		
		\draw[thick,->] (1) -- (2);
		\draw[thick,->] (2) -- (1,0);
		\draw[thick,->] (3) -- (4);
		\draw[thick,->] (4) -- (2.5,0);
		\draw[thick,->] (5) -- (6);
		\draw[thick,->] (6) -- (4,0);
		\draw[thick,->] (8) -- (1);
		\draw[thick,->] (9) -- (.75,1);
		\draw[thick,->] (7) -- (12);
		\draw[thick,->] (12) -- (11);
		\draw[thick,->] (11) -- (10);
		\draw[thick,->] (10) -- (5);
		\draw[thick,->] (3) -- (9);
		
		\node[outer sep=.1pt,inner sep=.5pt,minimum size=.01cm] at (1.1,0) {};
		\node[outer sep=.1pt,inner sep=.5pt,minimum size=.01cm] at (1.2,0) {};
		\node[outer sep=.1pt,inner sep=.5pt,minimum size=.01cm] at (1.3,0) {};
		\node[outer sep=.1pt,inner sep=.5pt,minimum size=.01cm] at (2.6,0) {};
		\node[outer sep=.1pt,inner sep=.5pt,minimum size=.01cm] at (2.7,0) {};
		\node[outer sep=.1pt,inner sep=.5pt,minimum size=.01cm] at (2.8,0) {};
		\node[outer sep=.1pt,inner sep=.5pt,minimum size=.01cm] at (4.1,0) {};
		\node[outer sep=.1pt,inner sep=.5pt,minimum size=.01cm] at (4.2,0) {};
		\node[outer sep=.1pt,inner sep=.5pt,minimum size=.01cm] at (4.3,0) {};
		\node[outer sep=.1pt,inner sep=.5pt,minimum size=.01cm] at (.45,1) {};
		\node[outer sep=.1pt,inner sep=.5pt,minimum size=.01cm] at (.55,1) {};
		\node[outer sep=.1pt,inner sep=.5pt,minimum size=.01cm] at (.65,1) {};
              \end{tikzpicture}
              \caption{Type 2}\label{fig:nonHamilton_type2}	
              \end{subfigure}
            \end{minipage}
            \begin{minipage}[b]{.54\textwidth}
		\begin{subfigure}{\linewidth}
			\centering
			\begin{tikzpicture}
			[xscale=1.4,every node/.style={draw,fill=black,shape=circle,outer sep=1pt,inner sep=1pt,minimum size=.1cm}]
			
			\node (0) at (0,0)     [label={[label distance=-2pt]-90:$1$}]{};
			\node (1) at (.5,0)    [label={[label distance=-2pt]-90:$j_2$}]{};
			\node (2) at (1.5,0)   [label={[label distance=-2pt]-90:$j_q$}]{};
			\node (3) at (2,0)     [label={[label distance=-6pt]-90:$j_{q+1}$}]{};
			\node (4) at (3,0)     [label={[label distance=-2pt]-90:$j_p$}]{};
			\node (5) at (3.5,0)   [label={[label distance=-6pt]-90:$j_{k+1}$}]{};
			\node (6) at (4.5,0)   [label={[label distance=-2pt]-60:$j_{n-3}$}]{};

			\draw[thick,->] (0) -- (1);	
			\draw[thick,->] (1) -- (1,0);
			\draw[thick,->] (2) -- (3);
			\draw[thick,->] (3) -- (2.5,0);	
			\draw[thick,->] (4) -- (5);
			\draw[thick,->] (5) -- (4,0);

			\node (7) at (.25,1)      [label={[label distance=-2pt]90:$j_k$}]{};	
			\node (11) at (2.75,1)    [label={[label distance=-6pt]90:$j_{p+1}$}]{};

			\draw[thick,->] (7) -- (0);
			\draw[thick,->] (4) -- (11);
			\draw[thick,->] (11) -- (1.75,1);
                        \draw[thick,->] (1.15,1) -- (7);

			\node (12) at (2,-1)      [label={[label distance=-2pt]-90:$j_n$}]{};	
			\node (14) at (3,-1)     [label={[label distance=-6pt]-90:$j_{n-1}$}]{};
			\node (16) at (4,-1)    [label={[label distance=-6pt]-90:$j_{n-2}$}]{};

			\draw[thick,->] (6) -- (16);
			\draw[thick,->] (16) -- (14);		
			\draw[thick,->] (14) -- (12);
			\draw[thick,->] (12) -- (2);

			\node[outer sep=.1pt,inner sep=.5pt,minimum size=.01cm] at (1.1,0) {};	
			\node[outer sep=.1pt,inner sep=.5pt,minimum size=.01cm] at (1.2,0) {};
			\node[outer sep=.1pt,inner sep=.5pt,minimum size=.01cm] at (1.3,0) {};

			\node[outer sep=.1pt,inner sep=.5pt,minimum size=.01cm] at (2.6,0) {};
			\node[outer sep=.1pt,inner sep=.5pt,minimum size=.01cm] at (2.7,0) {};
			\node[outer sep=.1pt,inner sep=.5pt,minimum size=.01cm] at (2.8,0) {};

			\node[outer sep=.1pt,inner sep=.5pt,minimum size=.01cm] at (4.1,0) {};
			\node[outer sep=.1pt,inner sep=.5pt,minimum size=.01cm] at (4.2,0) {};
			\node[outer sep=.1pt,inner sep=.5pt,minimum size=.01cm] at (4.3,0) {};

			\node[outer sep=.1pt,inner sep=.5pt,minimum size=.01cm] at (1.35,1) {};
			\node[outer sep=.1pt,inner sep=.5pt,minimum size=.01cm] at (1.45,1) {};
			\node[outer sep=.1pt,inner sep=.5pt,minimum size=.01cm] at (1.55,1) {};
			
                      \end{tikzpicture}
                      \caption{Type 3}\label{fig:nonHamilton_type3}
                    \end{subfigure}

                    \bigskip
			\begin{subfigure}{\linewidth}
				\centering
				\begin{tikzpicture}[xscale=1.4,every node/.style={draw,fill=black,shape=circle,outer sep=1pt,inner sep=1pt,minimum size=.1cm}]
				
				\node (1)  at (0,0)    [label={[label distance=-2pt]-90:$1$}]{};
				\node (2)  at (.5,0)   [label={[label distance=-2pt]-90:$j_2$}]{};
				\node (3)  at (1.5,0)  [label={[label distance=-2pt]-90:$j_p$}]{};
				\node (4)  at (2,0)    [label={[label distance=-6pt]-90:$j_{k+1}$}]{};
				\node (5)  at (3,0)    [label={[label distance=-2pt]-90:$j_q$}]{};
				\node (6)  at (3.5,0)  [label={[label distance=-6pt]-90:$j_{q+1}$}]{};
				\node (7)  at (4.5,0)  [label={[label distance=-6pt]-90:$j_{r-3}$}]{};
				\node (8)  at (.25,1)  [label={[label distance=-2pt]90:$j_k$}]{};
				\node (9)  at (1.25,1) [label={[label distance=-6pt]90:$j_{p+1}$}]{};
				\node (10) at (3.25,1) [label={[label distance=-2pt]90:$j_r$}]{};
				\node (11) at (3.75,1) [label={[label distance=-6pt]90:$j_{r-1}$}]{};
				\node (12) at (4.25,1) [label={[label distance=-6pt]90:$j_{r-2}$}]{};
				\node (13) at (5,0)	[label={ [label distance= -1pt]-70:$j_{r+1}$}]{};
				\node (14) at (6,0)	[label={[label distance= 1pt] -90:$j_{n}$}]{};

				\draw[thick,->] (1) -- (2);
				\draw[thick,->] (2) -- (1,0);
				\draw[thick,->] (3) -- (4);
				\draw[thick,->] (4) -- (2.5,0);
				\draw[thick,->] (5) -- (6);
				\draw[thick,->] (6) -- (4,0);
				\draw[thick,->] (8) -- (1);
				\draw[thick,->] (9) -- (.75,1);
				\draw[thick,->] (7) -- (12);
				\draw[thick,->] (12) -- (11);
				\draw[thick,->] (11) -- (10);
				\draw[thick,->] (10) -- (5);
				\draw[thick,->] (3) -- (9);
				
				\node[outer sep=.1pt,inner sep=.5pt,minimum size=.01cm] at (1.1,0) {};
				\node[outer sep=.1pt,inner sep=.5pt,minimum size=.01cm] at (1.2,0) {};
				\node[outer sep=.1pt,inner sep=.5pt,minimum size=.01cm] at (1.3,0) {};
				\node[outer sep=.1pt,inner sep=.5pt,minimum size=.01cm] at (2.6,0) {};
				\node[outer sep=.1pt,inner sep=.5pt,minimum size=.01cm] at (2.7,0) {};
				\node[outer sep=.1pt,inner sep=.5pt,minimum size=.01cm] at (2.8,0) {};
				\node[outer sep=.1pt,inner sep=.5pt,minimum size=.01cm] at (4.1,0) {};
				\node[outer sep=.1pt,inner sep=.5pt,minimum size=.01cm] at (4.2,0) {};
				\node[outer sep=.1pt,inner sep=.5pt,minimum size=.01cm] at (4.3,0) {};
				\node[outer sep=.1pt,inner sep=.5pt,minimum size=.01cm] at (.65,1) {};
				\node[outer sep=.1pt,inner sep=.5pt,minimum size=.01cm] at (.45,1) {};
				\node[outer sep=.1pt,inner sep=.5pt,minimum size=.01cm] at (.55,1) {};
				\node[outer sep=.1pt,inner sep=.5pt,minimum size=.01cm] at (5.2,0) {};
				\node[outer sep=.1pt,inner sep=.5pt,minimum size=.01cm] at (5.4,0) {};
				\node[outer sep=.1pt,inner sep=.5pt,minimum size=.01cm] at (5.6,0) {};
				\node[outer sep=.1pt,inner sep=.5pt,minimum size=.01cm] at (5.8,0) {};
                              \end{tikzpicture}
                              \caption{Type 4}\label{fig:nonHamilton_type4}
                              \end{subfigure}
                            \end{minipage}
                          }
			\caption{Different types of non-Hamiltonian extreme points of the polytope $\hBeta(G)$}\label{fig:types}
\end{figure}

\begin{definition}\label{def:Ham_basis}
  Let $\vect{x}$ be an extreme point of the polytope $\hBeta(G)$ with corresponding support
  $\supportgraph(G,\vect x)$. A feasible basis that contains the arc set of $\supportgraph(G,\vect x)$ is
  called a \emph{Hamiltonian basis} if $\vect x$ is a Hamiltonian extreme point, and otherwise it is
  called a \emph{non-Hamiltonian basis}.
\end{definition}

\begin{remark}
  Analogously, the set of feasible bases of the polytope $\hBeta(G)$ can be partitioned into five
  types: Hamiltonian bases (namely, feasible bases of Type $0$) and non-Hamiltonian bases of types
  1--4, where a non-Hamiltonian basis is Type $i$ for $i=1,2,3,4$ if its corresponding extreme
  point is Type $i$.
\end{remark}

Since the supports of extreme points of types 1--3 have exactly $n+1$ elements, they are all
non-degenerate extreme points. This implies that each extreme point of these types has exactly one
corresponding feasible basis forming its support. However, as Hamiltonian and
non-Hamiltonian Type 4 extreme points have, respectively, exactly and at most $n$ positive
components, they are all degenerate extreme points and accordingly, they may possess several
corresponding feasible bases. Thus, the support $\supportgraph(G,\vect x)$ associated with a
Hamiltonian or non-Hamiltonian Type 4 extreme point $\vect{x}$ does not reveal complete
information about the feasible bases corresponding to $\vect{x}$.  For example,
Figure~\ref{fig:nonHam_EP}(\subref{fig:nonHamilton_Ext_2}) shows $\supportgraph(K_7,\vect x)$, where
$K_7$ is the complete graph on seven nodes
and $\vect{x}$ is the non-Hamiltonian extreme point of Type 4 with positive coordinates
$x_{12} = 1$, $x_{23} = \beta^3 (1+\beta^2) + \beta$, $x_{32} = \beta^2(1+\beta^2)$,
$x_{31} = \beta^6$. Clearly, in order to construct a feasible basis corresponding to this extreme
point, we should add four more appropriate arcs (not necessarily any four arbitrary arcs) to the
support given in Figure~\ref{fig:nonHam_EP}(\subref{fig:nonHamilton_Ext_2}). If we try the four arcs
$(4,5), (6,5), (7,6)$ and $(7,4)$, as in
Figure~\ref{fig:nonHam_EP}(\subref{fig:nonHamilton_basis_1}), this fails as it would induce linear
dependency. However, if we complete the basis with arcs $(3,4), (5,4), (6,7)$ and $(7,6)$, as in
Figure~\ref{fig:nonHam_EP}(\subref{fig:nonHamilton_basis_2}), this results in a feasible basis of
Type 4.

Thus, an important question raised here is which arcs can be added to the support of a
degenerate extreme point of the polytope $\mathcal{H}_\beta(G)$ to construct a corresponding
feasible basis? This question is addressed in Proposition~\ref{prop:Hamilton_basis} and
Theorem~\ref{thm:characterising_nonHamiltonian_bases} for Hamiltonian and non-Hamiltonian Type
4 extreme points, respectively.
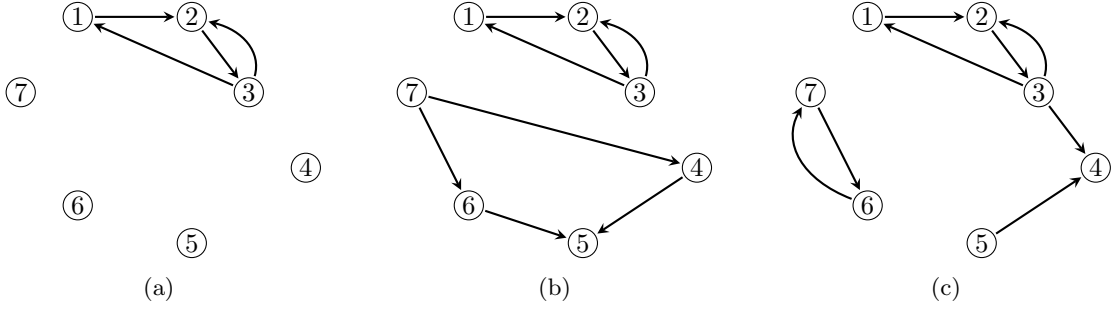
\begin{figure}
		\centering
		\begin{subfigure}{.3\textwidth}
			\centering
		\begin{tikzpicture}[xscale=1.5,every node/.style={draw,shape=circle,outer sep=1pt,inner sep=1pt,minimum size=.15cm}]
		\node (1) at (0,0) {1};
		\node (2) at (1,0) {2};
		\node (3) at (1.5,-1) {3};
		\node (4) at (2,-2) {4};
		\node (5) at (1,-3) {5};
		\node (6) at (0,-2.5) {6};
		\node (7) at (-.5,-1) {7};

		\draw[thick,->] (1) -- (2);
		\draw[thick,->] (2) -- (3);
		\draw[thick,->] (3) -- (1);
		\draw[thick,bend right=40,->] (3) to (2);
		\end{tikzpicture}
		\caption{\small }\label{fig:nonHamilton_Ext_2}
	\end{subfigure}
	\begin{subfigure}{.3\textwidth}
		\centering
		\begin{tikzpicture}[xscale=1.5,every node/.style={draw,shape=circle,outer sep=1pt,inner sep=1pt,minimum size=.15cm}]
				\node (1) at (0,0) {1};
				\node (2) at (1,0) {2};
				\node (3) at (1.5,-1) {3};
				\node (4) at (2,-2) {4};
				\node (5) at (1,-3) {5};
				\node (6) at (0,-2.5) {6};
				\node (7) at (-.5,-1) {7};

				\draw[thick,->] (1) -- (2);
				\draw[thick,->] (2) -- (3);
				\draw[thick,->] (3) -- (1);
				\draw[thick,bend right=40,->] (3) to (2);
				\draw[thick,->] (4) -- (5);
				\draw[thick,->] (6) -- (5);
				\draw[thick,->] (7) -- (6);
				\draw[thick,->] (7) -- (4);
		\end{tikzpicture}\caption{}\label{fig:nonHamilton_basis_1}
	\end{subfigure}
	\begin{subfigure}{.3\textwidth}
		\centering
			\begin{tikzpicture}[xscale=1.5,every node/.style={draw,shape=circle,outer sep=1pt,inner sep=1pt,minimum size=.15cm}]
			\node (1) at (0,0) {1};
			\node (2) at (1,0) {2};
			\node (3) at (1.5,-1) {3};
			\node (4) at (2,-2) {4};
			\node (5) at (1,-3) {5};
			\node (6) at (0,-2.5) {6};
			\node (7) at (-.5,-1) {7};

			\draw[thick,->] (1) -- (2);
			\draw[thick,->] (2) -- (3);
			\draw[thick,->] (3) -- (1);
			\draw[thick,bend right=40,->] (3) to (2);
			\draw[thick,->] (3) -- (4);
			\draw[thick,->] (5) -- (4);
			\draw[thick,bend left=40,->] (6) to (7);
			\draw[thick,->] (7) -- (6);
			\end{tikzpicture}
		\caption{}\label{fig:nonHamilton_basis_2}
	\end{subfigure}
	\caption{(a) Support of a non-Hamiltonian extreme point of Type 4 of $\hBeta(K_7)$, and two
        possible ways to add four arcs leading to a (b) linearly dependent set, and (c) a feasible basis.}\label{fig:nonHam_EP}
\end{figure}
\begin{proposition}\label{prop:Hamilton_basis}
  A set $B\subseteq \arcset$ of size $\lvert B\rvert=n+1$ is a Hamiltonian basis of the polytope
  $\hBeta(G)$ if and only if $B$ contains a Hamiltonian cycle.
\end{proposition}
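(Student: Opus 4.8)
The plan is to prove the two implications separately; the ``only if'' direction is essentially immediate from the definitions, whereas the ``if'' direction carries all the content. For ``only if'', suppose $B$ is a Hamiltonian basis. By Definition~\ref{def:Ham_basis} the basic solution $\vect{x}$ associated with $B$ is a Hamiltonian extreme point, so by Definition~\ref{def:Ham_EP} the support $\supportgraph(G,\vect{x})$ is a Hamiltonian cycle of $G$; since the support of a basic solution is always a subset of its basis, $B$ contains (the arc set of) this Hamiltonian cycle, as required.

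For ``if'', assume $B$ contains the arc set of a Hamiltonian cycle $C\colon 1 = v_0\to v_1\to\dots\to v_{n-1}\to v_0 = 1$. Since $\abs{B}=n+1$ and $\abs{C}=n$, we have $B = C\cup\{e\}$ for a single arc $e\notin C$. I would first exhibit the natural candidate point $\vect{x}^{*}$ with $x^{*}_{v_k v_{k+1}}=\beta^{k}$ for $k=0,\dots,n-1$ (indices modulo $n$, so $v_n=v_0$) and all remaining coordinates, including $x^{*}_e$, equal to zero, and check that $\vect{x}^{*}\in\hBeta(G)$: at each node $v_k$ with $k\neq 0$ the out-flow $\beta^{k}$ exactly cancels $\beta$ times the in-flow $\beta^{k-1}$, at node $1$ the flow balance equals $1-\beta^{n}$, and the injection equation \eqref{eq:flow_injection} reads $\beta^{0}=1$; nonnegativity is clear because $\beta\in(0,1)$. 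Its support is precisely $C$.

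The main step is to show that the $(n+1)\times(n+1)$ matrix $A_B$ is nonsingular; granting this, feasibility of $B$ follows at once, since $\vect{x}^{*}$ restricted to $B$ solves $A_B\vect{x}_B=\vect{b}$ and hence equals $A_B^{-1}\vect{b}$, which is therefore nonnegative, so conditions \textbf{B1} and \textbf{B2} both hold. To prove nonsingularity I would argue that (i) the $n$ columns of $A_B$ indexed by $C$ already have rank $n$ --- the corresponding homogeneous system on the flow-conservation rows forces $x_{v_k v_{k+1}}=\beta\,x_{v_{k-1}v_k}$ all the way around the cycle, whence $(1-\beta^{n})x_{v_0 v_1}=0$ and only the trivial solution remains --- so their span is a hyperplane in $\reals^{n+1}$; and (ii) the (one-dimensional) left null space of this block of $C$-columns, obtained by solving $\phi_{v_k}=\beta\,\phi_{v_{k+1}}$ for $k=1,\dots,n-1$ together with the relation coming from the injection row, is spanned by the covector $\vect{\phi}$ with $\phi_{v_0}=1$, $\phi_{v_k}=\beta^{n-k}$ for $k=1,\dots,n-1$, and injection-coordinate $-(1-\beta^{n})$. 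The crucial point, to be verified by a short case analysis on whether the tail and/or head of an arc equals node $1$, is that $\vect{\phi}$ applied to the column of an arc $(i,j)$ of $G$ vanishes \emph{if and only if} $(i,j)$ is an arc of $C$. Hence the column of $e\notin C$ does not lie in the span of the $C$-columns, so the $n+1$ columns of $A_B$ are linearly independent.

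Putting the pieces together, $B$ is a feasible basis whose basic solution is $\vect{x}^{*}$, and since $\supportgraph(G,\vect{x}^{*})=C$ is a Hamiltonian cycle, $\vect{x}^{*}$ is a Hamiltonian extreme point and $B$ is a Hamiltonian basis by Definition~\ref{def:Ham_basis}. I expect the only genuinely delicate part to be the null-covector computation in the main step, and in particular the ``if and only if'' characterization of which arc-columns $\vect{\phi}$ kills: this is exactly what guarantees that adjoining \emph{any} arc outside $C$ enlarges the column space, and it is where the assumption $\beta\in(0,1)$ (ensuring $1-\beta^{n}\neq 0$ and that the powers $\beta^{0},\dots,\beta^{n-1}$ are distinct and positive) is used essentially.
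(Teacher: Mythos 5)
Your proof is correct, and the key step is carried out by a genuinely different route from the paper's. Both arguments reduce everything to showing that the columns of $A_B$ for $B=C\cup\{e\}$ are linearly independent (condition \textbf{B1}), after which non-negativity (\textbf{B2}) follows from the explicit basic solution supported on $C$ — which you verify directly, whereas the paper imports it from Lemma 3.2 of \citet{eshragh2011hybrid}. For the independence step the paper argues by contradiction: it assumes a non-zero vector in the kernel of $A_B$, normalizes the coordinate of the extra arc $(i,j)$ to $1$, propagates values around the cycle by induction, and splits into four cases according to whether $i=1$, $j=1$, $i<j$ or $i>j$, deriving a sign contradiction in each. You instead produce a dual certificate: you first check that the $n$ cycle columns are independent (via $(1-\beta^n)x_{v_0v_1}=0$), then exhibit the spanning covector $\vect{\phi}$ of their one-dimensional left null space and show $\vect{\phi}^{T}a_{(i,j)}=0$ precisely when $(i,j)\in C$, so adjoining \emph{any} non-cycle arc enlarges the column space. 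This handles all positions of the extra arc uniformly, collapses the paper's four cases into the three short checks on whether the tail or head of the arc is node $1$, and makes transparent exactly where $\beta\in(0,1)$ is used (distinctness of the powers of $\beta$ and $1-\beta^n\neq 0$). The paper's computation is more elementary but longer; your argument is shorter and, as a by-product, identifies the full hyperplane spanned by the cycle columns rather than just the independence of one particular augmentation.
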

\begin{proof}
  If $B$ is a Hamiltonian basis, it directly follows from Definitions~\ref{def:Ham_EP}
  and~\ref{def:Ham_basis} that it contains a Hamiltonian cycle. So, we just need to show that for
  any Hamiltonian cycle in $G$ with the arc set $C\subseteq E$ and any arc $(i,j)\in E\setminus C$,
  the set $B=C\cup \{(i,j)\}$ is a feasible basis for the polytope $\mathcal{H}_\beta(G)$. Without
  loss of generality, assume that
  \[C = \{(1,2),(2,3),\ldots,(n-1,n),(n,1)\}\]
  and fix any such $B$. In order to
  show that $B$ is a feasible basis, we need to show that conditions \textbf{B1} and \textbf{B2}
  hold. Let $A_B$ denote the $(n+1)\times (n+1)$-submatrix of $A$
  corresponding to $B$. If $A_B$ is invertible and $\vect x$ is the vector whose coordinates
  corresponding to $B$ are given by $A_B^{-1}\vect b$ while all other coordinates are zero, then it
   has been proved in \cite[Lemma 3.2]{eshragh2011hybrid} that
  $x_{k\,k+1}=\beta^{k-1}>0$ for $k\in\{1,2,\dotsc,n-1\}$, $x_{n1}=\beta^{n-1}>0$, and $x_{kl}=0$
  for all $(k,l)\in E\setminus C$. So the non-negativity condition \textbf{B2} is satisfied whenever
  the independence condition \textbf{B1} holds. Hence, we only need to prove that condition \textbf{B1}
  holds, that is the set $B$ is linearly independent.

  We prove this by contradiction. Let us assume that there exists a non-zero $(n+1)\times 1$
  vector $\vect x$ satisfying $A_B \vect x = \vect 0$. In this proof, all indices $0$ and $n+1$ that appear in arc-indicating subscripts are equivalent to $n$ and $1$, respectively; for example,
    $x_{n\,n+1} = x_{n1}$ and $x_{01} = x_{n1}$. Following
  constraints~(\ref{eq:flow_conservation_1})--(\ref{eq:flow_injection}), replacing the right-hand
  sides of~(\ref{eq:flow_conservation_1}) and~(\ref{eq:flow_injection}) by 0, this set of linear
  equations is represented as follows:
  \begin{align}
    \medskip x_{k\,k+1} - \beta x_{k-1\,k} & = 0 && \text{for }k\in V\setminus \{i,j\}, \label{eq:flow_con_k_prop1}\\
    \medskip x_{i\,i+1} + x_{i\,j} - \beta x_{i-1\,i} & = 0, \label{eq:flow_con_i_prop1}\\
    \medskip x_{j\,j+1} - \beta \left(x_{j-1\,j} + x_{i\,j}\right) & = 0, \label{eq:flow_con_j_prop1}\\
    \medskip x_{12} + \delta_{i\,1}x_{i\,j} & = 0, \label{eq:flow_injection_prop1}
  \end{align}
  where $\delta_{i\,1}$ is the Kronecker delta, which is equal to one if $i=1$, otherwise zero. If
  $x_{ij}=0$ then Equation~(\ref{eq:flow_injection_prop1}) implies that $x_{12}=0$, and then $x_{k\,k+1}=0$ for
  all $k\in\{1,2,\dots,n\}$ by induction on $k$,
  using~(\ref{eq:flow_con_k_prop1}),~(\ref{eq:flow_con_i_prop1}) and~(\ref{eq:flow_con_j_prop1}). Hence,
  our assumption $\vect x\neq\vect 0$ implies that $x_{i\,j} \neq 0$.  Therefore, without loss of
  generality, we assume $x_{i\,j}=1$. As illustrated in Figure~\ref{fig:cases}, there are four
  possible cases for the arc $(i,j)$ discussed below. We show that in all four cases the non-zero 
  assumption on $\vect x$ is violated, implying that the set $B$ is linearly
  independent.
  \tikzset{every node/.style={ellipse,inner sep=1pt,outer
      sep=2pt,draw,minimum width=1.2cm,minimum height=.45cm}}
	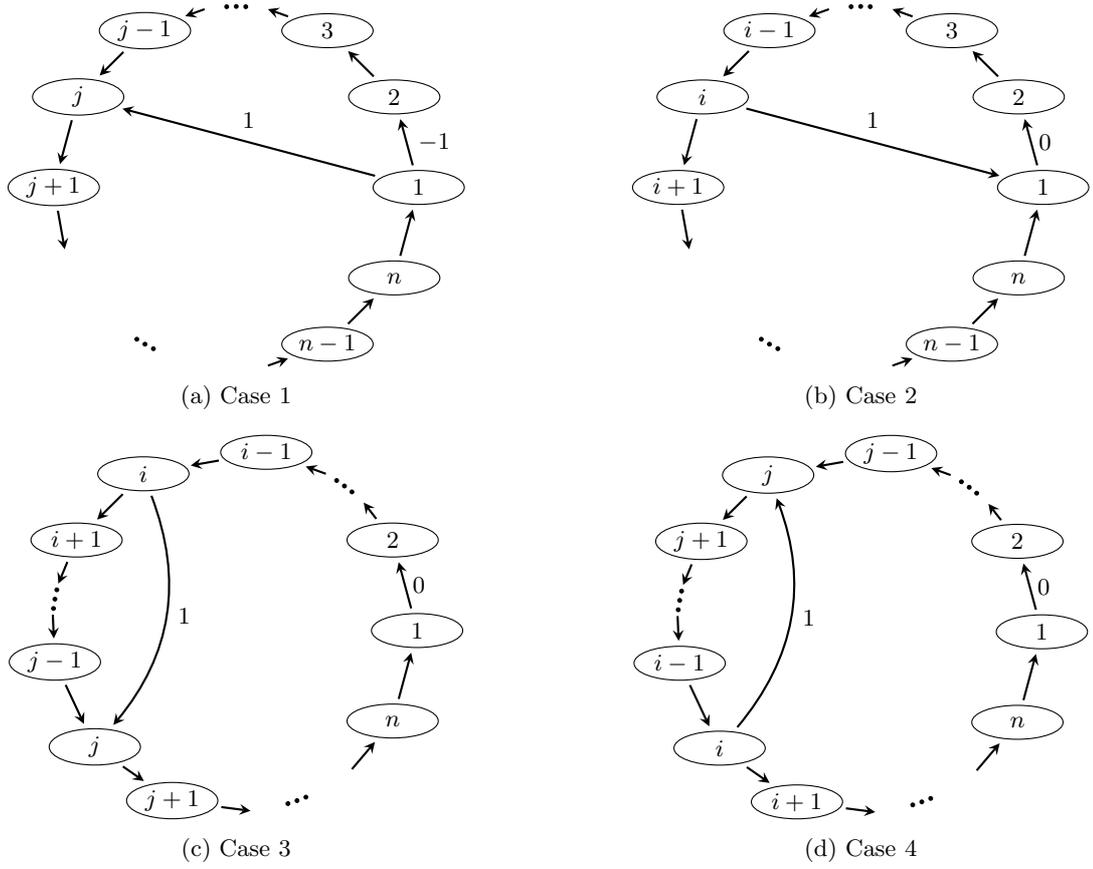
\begin{figure}
		{\small
			\centering
			\begin{subfigure}[b]{0.49\textwidth}
				\begin{center}
					\begin{tikzpicture}[scale=1.2]
					\node (v1) at (0:2) {{\small $1$}};
					\node (v2) at (30:2) {{\small $2$}};
					\node (v3) at (60:2) {{\small $3$}};
					\node (vn1) at (-60:2) {{\small $n-1$}};
					\node (vn) at (-30:2) {{\small $n$}};
					\node (vj1) at (120:2) {{\small $j-1$}};
					\node (vj) at (150:2) {{\small $j$}};
					\node (vj2) at (180:2) {{\small $j+1$}};
					\draw[thick,->] (v1) to node[draw=none,fill=none,above] {$1$} (vj);
					\node[circle,inner sep=0pt,minimum size=.5mm,fill=black,draw] at (87:2) {};
					\node[circle,inner sep=0pt,minimum size=.5mm,fill=black,draw] at (90:2) {};
					\node[circle,inner sep=0pt,minimum size=.5mm,fill=black,draw] at (93:2) {};          
					\draw[thick,->] (v1) to node[draw=none,fill=none,right,minimum size=0pt,inner sep=0pt] {{\small $-1$}} (v2);
					\draw[thick,->] (v2) -- (v3);
					\draw[thick,->] (vj1) -- (vj);
					\draw[thick,->] (vj) -- (vj2);
					\draw[thick,->] (vn1) -- (vn);
					\draw[thick,->] (vn) -- (v1);
					\draw[thick,->] (v3) -- (80:2);
					\draw[thick,->] (100:2) -- (vj1);
					\draw[thick,->] (vj2) -- (200:2);
					\draw[thick,->] (-80:2) -- (vn1);
					\node[circle,inner sep=0pt,minimum size=.5mm,fill=black,draw] at (237:2) {};
					\node[circle,inner sep=0pt,minimum size=.5mm,fill=black,draw] at (240:2) {};
					\node[circle,inner sep=0pt,minimum size=.5mm,fill=black,draw] at (243:2) {};
					\end{tikzpicture}
					\caption{Case 1}
				\end{center}
			\end{subfigure}%
			\begin{subfigure}[b]{0.49\textwidth}
				\centering
				\begin{tikzpicture}[scale=1.2]
				\node (v1) at (0:2) {{\small $1$}};
				\node (v2) at (30:2) {{\small $2$}};
				\node (v3) at (60:2) {{\small $3$}};
				\node (vn1) at (-60:2) {{\small $n-1$}};
				\node (vn) at (-30:2) {{\small $n$}};
				\node (vj1) at (120:2) {{\small $i-1$}};
				\node (vj) at (150:2) {{\small $i$}};
				\node (vj2) at (180:2) {{\small $i+1$}};
				\draw[thick,<-] (v1) to node[draw=none,fill=none,above] {$1$} (vj);           
				\node[circle,inner sep=0pt,minimum size=.5mm,fill=black,draw] at (87:2) {};
				\node[circle,inner sep=0pt,minimum size=.5mm,fill=black,draw] at (90:2) {};
				\node[circle,inner sep=0pt,minimum size=.5mm,fill=black,draw] at (93:2) {};          
				\draw[thick,->] (v1) to node[draw=none,fill=none,right,minimum size=0pt,inner sep=0pt] {{\small $0$}} (v2);
				\draw[thick,->] (v2) -- (v3);
				\draw[thick,->] (vj1) -- (vj);
				\draw[thick,->] (vj) -- (vj2);
				\draw[thick,->] (vn1) -- (vn);
				\draw[thick,->] (vn) -- (v1);
				\draw[thick,->] (v3) -- (80:2);
				\draw[thick,->] (100:2) -- (vj1);
				\draw[thick,->] (vj2) -- (200:2);
				\draw[thick,->] (-80:2) -- (vn1);
				\node[circle,inner sep=0pt,minimum size=.5mm,fill=black,draw] at (237:2) {};
				\node[circle,inner sep=0pt,minimum size=.5mm,fill=black,draw] at (240:2) {};
				\node[circle,inner sep=0pt,minimum size=.5mm,fill=black,draw] at (243:2) {};
				\end{tikzpicture}
				\caption{Case 2}
			\end{subfigure}
			
			\bigskip
			
			\begin{subfigure}[b]{0.49\textwidth}
				\centering
				\begin{tikzpicture}[scale=1.2]
				\node (v1) at (0:2) {{\small $1$}};
				\node (v2) at (30:2) {{\small $2$}};          
				\node[circle,inner sep=0pt,minimum size=.5mm,fill=black,draw] at (50:2) {};
				\node[circle,inner sep=0pt,minimum size=.5mm,fill=black,draw] at (53:2) {};
				\node[circle,inner sep=0pt,minimum size=.5mm,fill=black,draw] at (56:2) {};
				\node (vn) at (-30:2) {{\small $n$}};
				\node (vi1) at (80:2) {{\small $i-1$}};
				\node (vi) at (120:2) {{\small $i$}};
				\node (vi2) at (150:2) {{\small $i+1$}};
				\node[circle,inner sep=0pt,minimum size=.5mm,fill=black,draw] at (167:2) {};
				\node[circle,inner sep=0pt,minimum size=.5mm,fill=black,draw] at (170:2) {};
				\node[circle,inner sep=0pt,minimum size=.5mm,fill=black,draw] at (173:2) {};
				\node (vj1) at (190:2) {{\small $j-1$}};
				\node (vj) at (220:2) {{\small $j$}};
				\node (vj2) at (250:2) {{\small $j+1$}};
				\node[circle,inner sep=0pt,minimum size=.5mm,fill=black,draw] at (287:2) {};
				\node[circle,inner sep=0pt,minimum size=.5mm,fill=black,draw] at (290:2) {};
				\node[circle,inner sep=0pt,minimum size=.5mm,fill=black,draw] at (293:2) {};
				\draw[thick,bend left=30,->] (vi) to node[draw=none,fill=none,right,minimum size=0] {$1$}
				(vj);
				\draw[thick,->] (v1) to node[draw=none,fill=none,right,minimum size=0pt,inner sep=0pt] {{\small $0$}} (v2);
				\draw[thick,->] (v2) -- (45:2);
				\draw[thick,->] (60:2) -- (vi1);
				\draw[thick,->] (vi1) -- (vi);
				\draw[thick,->] (vi) -- (vi2);
				\draw[thick,->] (vi2) -- (165:2);
				\draw[thick,->] (175:2) -- (vj1);
				\draw[thick,->] (vj1) -- (vj);
				\draw[thick,->] (vj) -- (vj2);
				\draw[thick,->] (vj2) -- (275:2);
				\draw[thick,->] (-50:2) -- (vn);
				\draw[thick,->] (vn) -- (v1);
				\end{tikzpicture}
				\caption{Case 3}
			\end{subfigure}%
			\begin{subfigure}[b]{0.49\textwidth}
				\centering
				\begin{tikzpicture}[scale=1.2]
				\node (v1) at (0:2) {{\small $1$}};
				\node (v2) at (30:2) {{\small $2$}};          
				\node[circle,inner sep=0pt,minimum size=.5mm,fill=black,draw] at (50:2) {};
				\node[circle,inner sep=0pt,minimum size=.5mm,fill=black,draw] at (53:2) {};
				\node[circle,inner sep=0pt,minimum size=.5mm,fill=black,draw] at (56:2) {};
				\node (vn) at (-30:2) {{\small $n$}};
				\node (vi1) at (80:2) {{\small $j-1$}};
				\node (vi) at (120:2) {{\small $j$}};
				\node (vi2) at (150:2) {{\small $j+1$}};
				\node[circle,inner sep=0pt,minimum size=.5mm,fill=black,draw] at (167:2) {};
				\node[circle,inner sep=0pt,minimum size=.5mm,fill=black,draw] at (170:2) {};
				\node[circle,inner sep=0pt,minimum size=.5mm,fill=black,draw] at (173:2) {};
				\node (vj1) at (190:2) {{\small $i-1$}};
				\node (vj) at (220:2) {{\small $i$}};
				\node (vj2) at (250:2) {{\small $i+1$}};
				\node[circle,inner sep=0pt,minimum size=.5mm,fill=black,draw] at (287:2) {};
				\node[circle,inner sep=0pt,minimum size=.5mm,fill=black,draw] at (290:2) {};
				\node[circle,inner sep=0pt,minimum size=.5mm,fill=black,draw] at (293:2) {};
				\draw[thick,bend left=30,<-] (vi) to node[draw=none,fill=none,right,minimum size=0] {$1$}
				(vj);
				\draw[thick,->] (v1) to node[draw=none,fill=none,right,minimum size=0pt,inner sep=0pt] {{\small $0$}} (v2);
				\draw[thick,->] (v2) -- (45:2);
				\draw[thick,->] (60:2) -- (vi1);
				\draw[thick,->] (vi1) -- (vi);
				\draw[thick,->] (vi) -- (vi2);
				\draw[thick,->] (vi2) -- (165:2);
				\draw[thick,->] (175:2) -- (vj1);
				\draw[thick,->] (vj1) -- (vj);
				\draw[thick,->] (vj) -- (vj2);
				\draw[thick,->] (vj2) -- (275:2);
				\draw[thick,->] (-50:2) -- (vn);
				\draw[thick,->] (vn) -- (v1);
				\end{tikzpicture}
				\caption{Case 4}
			\end{subfigure}    
			\caption{Illustration for the four cases discussed in the proof of Proposition~\ref{prop:Hamilton_basis}.}
			\label{fig:cases}
		}
\end{figure}
\begin{description}
\item[Case 1] $i=1$, $3\leq j\leq n$. From Equation~(\ref{eq:flow_injection_prop1}) we obtain $x_{12}=-1$, and then
  by Equation~(\ref{eq:flow_con_k_prop1}) and induction on $k$ we have $x_{k\,k+1}=-\beta^{k-1}$ for $k =
  1,\dotsc,j-1$. So, Equation~(\ref{eq:flow_con_j_prop1}) simplifies to
  \[x_{j\,j+1}-\beta\left(-\beta^{j-2}+1\right)=0,\]
  implying that $x_{j\,j+1}=\beta-\beta^{j-1}$. Substituting this value into Equation~(\ref{eq:flow_con_k_prop1}),
  and proceeding by induction on $k$, we have 
  \[x_{k\,k+1}=\beta^{k-j+1}-\beta^{k-1}\quad\text{for } k=j,j+1,\dotsc,n.\]
  Hence, the left-hand side of Equation~(\ref{eq:flow_con_i_prop1}) is 
  \[x_{12}+x_{1j}-\beta x_{n1}=-1+1-\beta\left(\beta^{n-j+1}-\beta^{n-1}\right)< 0.\]
  In particular, $A_B\vect x\neq\vect 0$, which is the required contradiction.
\item[Case 2] $j=1$, $2\leq i\leq n-1$. From Equation~(\ref{eq:flow_injection_prop1}), we obtain
  $x_{12}=0$. As in Case 1, we have 
  \[x_{k\,k+1}=
    \begin{cases}
      0 &\text{for }k=1,2,\dots,i-1,\\
      -\beta^{k-i}&\text{for }k=i,i+1,\dots,n.
    \end{cases}
\]
Hence, the left-hand side of Equation~(\ref{eq:flow_con_j_prop1}) is 
  \[x_{12}-\beta \left(x_{n1}+x_{i1}\right)=0-\beta\left(-\beta^{n-i}+1\right)< 0.\]
  In particular, $A_B\vect x\neq\vect 0$, which is the required contradiction.
\item[Case 3] $2\leq i<j-1\leq n-1$. As in Case 2, we have
  \[x_{k\,k+1}=
    \begin{cases}
      0 & \text{for }k=1,2,\dots,i-1,\\
      -\beta^{k-i} &\text{for }k=i,i+1,\dots,j-1,\\
      \beta^{k-j+1}-\beta^{k-i}&\text{for }k=j,j+1,\dots,n.
    \end{cases}
\]
Hence, the left-hand side of Equation~(\ref{eq:flow_con_k_prop1}) for $k=1$ is 
  \[x_{12}-\beta x_{n1}=0-\beta\left(\beta^{n-j+1}-\beta^{n-i}\right)< 0.\]
In particular, $A_B\vect x\neq\vect 0$, which is the required contradiction.
\item[Case 4] $2\leq j<i\leq n$. As in Case 2, we have
  \[x_{k\,k+1}=
    \begin{cases}
      0 & \text{for }k=1,2,\dots,j-1,\\
      \beta^{k-j+1} &\text{for }k=j,j+1,\dots,i-1,\\
      \beta^{k-j+1}-\beta^{k-i}&\text{for }k=i,i+1,\dots,n.
    \end{cases}
\]
Hence, the left-hand side of Equation~(\ref{eq:flow_con_k_prop1}) for $k=1$ is
  \[x_{12}-\beta x_{n1}=0-\beta\left(\beta^{n-j+1}-\beta^{n-i}\right)> 0.\]
In particular, $A_B\vect x\neq\vect 0$, which is the required contradiction.\qedhere
\end{description}
\end{proof}

\begin{corollary}\label{lem:adj_HExt}
  Every Hamiltonian extreme point of $\hBeta(G)$ corresponds to $\lvert E\rvert-n$ Hamiltonian bases,
  and any two of these bases are connected by a single variable exchange.
\end{corollary}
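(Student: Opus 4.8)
The plan is to read the statement straight off Proposition~\ref{prop:Hamilton_basis}, together with the fact — extracted from its proof — that the basic solution associated with a Hamiltonian basis is supported exactly on the Hamiltonian cycle contained in it. First I would fix a Hamiltonian extreme point $\vect x$ of $\hBeta(G)$. By Definition~\ref{def:Ham_EP} its support $C:=\supportgraph(G,\vect x)$ is the arc set of a Hamiltonian cycle, so $\lvert C\rvert=n$. By Definition~\ref{def:Ham_basis} the feasible bases corresponding to $\vect x$ are precisely the feasible bases $B$ with $\lvert B\rvert=n+1$ and $C\subseteq B$, i.e. the sets of the form $B=C\cup\{e\}$ with $e\in E\setminus C$. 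I would justify this equivalence in both directions: any feasible basis whose basic solution equals $\vect x$ must contain all coordinates in which $\vect x$ is positive (nonbasic variables vanish), hence contains $C$; conversely, the proof of Proposition~\ref{prop:Hamilton_basis} shows that whenever $C\subseteq B$ and $A_B$ is invertible, the resulting basic solution is exactly $\vect x$ (positive on $C$, zero elsewhere).

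Next I would invoke Proposition~\ref{prop:Hamilton_basis}: each $B=C\cup\{e\}$ contains a Hamiltonian cycle and has size $n+1$, so it is a Hamiltonian basis; in particular $A_B$ is invertible and $B$ is genuinely a feasible basis. Therefore the Hamiltonian bases corresponding to $\vect x$ are in bijection with $E\setminus C$ via $e\mapsto C\cup\{e\}$. This map is injective because $e$ is recovered as the unique element of $B\setminus C$, so the number of such bases is $\lvert E\setminus C\rvert=\lvert E\rvert-n$, which is the first assertion.

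Finally, for the adjacency claim, take two distinct such bases $B_1=C\cup\{e_1\}$ and $B_2=C\cup\{e_2\}$ with $e_1\neq e_2$. Their common basic variables are exactly those in $C$, of which there are $n=(n+1)-1=\kappa-1$, where $\kappa=n+1$ is the number of basic variables of $\hBeta(G)$. Hence, by the definition of adjacency of feasible bases, $B_1$ and $B_2$ are adjacent, and $B_2$ is obtained from $B_1$ by the single exchange that removes $e_1$ and adds $e_2$; this proves the second assertion. I do not anticipate a real obstacle here — the only point requiring a little care is the characterization ``feasible basis corresponding to $\vect x$'' $\Longleftrightarrow$ ``$(n+1)$-element arc set containing $C$'', which is why I would argue both inclusions via the support description in the proof of Proposition~\ref{prop:Hamilton_basis} rather than merely quoting its statement.
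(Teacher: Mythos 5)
Your proposal is correct and follows essentially the same route as the paper, which simply observes that by Proposition~\ref{prop:Hamilton_basis} a Hamiltonian basis is exactly a Hamiltonian cycle plus one arbitrary additional arc; your write-up merely makes explicit the counting ($\lvert E\rvert-n$ choices of the extra arc) and the adjacency check (any two such bases share the $n=\kappa-1$ cycle arcs), both of which the paper leaves implicit.
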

\begin{proof}
  Proposition~\ref{prop:Hamilton_basis} implies that a Hamiltonian basis of $\hBeta(G)$ is
  obtained by adding an arbitrary arc to the arc set of any Hamiltonian cycle in $G$.
\end{proof}

For example, Figure~\ref{fig:Hamiltonian basis} illustrates three different Hamiltonian bases of the
polytope $\mathcal{H}_\beta(G)$ for the input graph $G$  given in
Figure~\ref{fig:Hamilton_nonHamilton_graph}(\subref{fig:Hamilton_graph}). All those three
Hamiltonian bases correspond to the Hamiltonian extreme point with the support displayed in Figure~\ref{fig:supports}(\subref{fig:HamiltonSupport}).

\begin{figure}
	\begin{subfigure}{.3\textwidth}
		\centering
		\begin{tikzpicture}[xscale=1.5,every node/.style={draw,shape=circle,outer sep=1pt,inner sep=1pt,minimum size=.15cm}]
		\node (1) at (0,0) {1};
		\node (2) at (1,0) {2};
		\node (3) at (1.5,-1) {3};
		\node (4) at (.5,-1) {4};
		\node (5) at (-.5,-1) {5};

		\draw[thick,->] (1) -- (2);
		\draw[thick,->] (2) -- (3);
		\draw[thick,->] (3) -- (4);
		\draw[thick,->] (4) -- (5);
		\draw[thick,->] (5) -- (1);
		\draw[thick,bend right=60,->,dashed] (2) to (1);
		
		\end{tikzpicture}
		\caption{}
	\end{subfigure}
	\hspace*{\fill}
	\begin{subfigure}{.3\textwidth}
		\centering
		\begin{tikzpicture}[xscale=1.5,every node/.style={draw,shape=circle,outer sep=1pt,inner sep=1pt,minimum size=.15cm}]
		\node (1) at (0,0) {1};
		\node (2) at (1,0) {2};
		\node (3) at (1.5,-1) {3};
		\node (4) at (.5,-1) {4};
		\node (5) at (-.5,-1) {5};

		\draw[thick,->] (1) -- (2);
		\draw[thick,->] (2) -- (3);
		\draw[thick,->] (3) -- (4);
		\draw[thick,->] (4) -- (5);
		\draw[thick,->] (5) -- (1);
		\draw[thick,->,dashed] (1) -- (4);
		\end{tikzpicture}
		\caption{}
	\end{subfigure}
		\hspace*{\fill}
		\begin{subfigure}{.3\textwidth}
			\centering
			\begin{tikzpicture}[xscale=1.5,every node/.style={draw,shape=circle,outer sep=1pt,inner sep=1pt,minimum size=.15cm}]
			\node (1) at (0,0) {1};
			\node (2) at (1,0) {2};
			\node (3) at (1.5,-1) {3};
			\node (4) at (.5,-1) {4};
			\node (5) at (-.5,-1) {5};

			\draw[thick,->] (1) -- (2);
			\draw[thick,->] (2) -- (3);
			\draw[thick,->] (3) -- (4);
			\draw[thick,->] (4) -- (5);
			\draw[thick,->] (5) -- (1);
			\draw[thick,bend right=60,->,dashed] (4) to (3);
			\end{tikzpicture}
			\caption{}
		\end{subfigure}
	\caption{Three Hamiltonian bases for the graph in Figure~\ref{fig:Hamilton_nonHamilton_graph}(\subref{fig:Hamilton_graph})}\label{fig:Hamiltonian basis}
\end{figure}
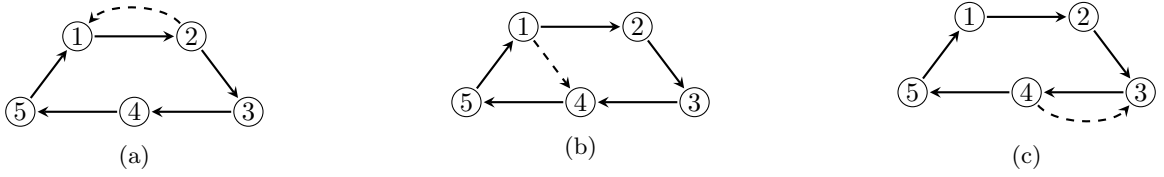

In the remainder of this section, we characterize the non-Hamiltonian bases of the polytope
$\hBeta(G)$. In order to formulate the necessary and sufficient conditions on an arc set $B\subseteq \arcset$ to be a non-Hamiltonian basis, we need to introduce the
following concept, and develop some preliminary results presented in
lemmas~\ref{lem:cycles}--\ref{lem:zero_arcs}.

\begin{definition}\label{def:balanced_cycle}
  Let $P=\{\{v_0,v_1\},\{v_1,v_2\},\dotsc,\{v_{k-1},v_k\}\}$, where $v_i \in \nodeset$ for $i=0,\dots,k$, be an undirected path (or cycle if
  $v_k=v_0$) in the graph $G$. An \emph{oriented} path (or cycle) of $P$ is denoted by $\overrightarrow{P}$ and defined by
\[\overrightarrow{P}=\{(u_1,w_1),(u_2,w_2),\dotsc,(u_k,w_k)\},\]
where $(u_i,w_i)\in\{(v_{i-1},v_i),(v_i,v_{i-1})\}$ for $i=1,\dotsc,k$. We call the arc $(u_i,w_i)$
a \emph{forward arc} if $(u_i,w_i)=(v_{i-1},v_i)$ and a \emph{backward arc} if
$(u_i,w_i)=(v_{i},v_{i-1})$. For the special cases where $\overrightarrow{P}$ is a path of length
one or a cycle of length two, that is $\overrightarrow{P}=\{(u_1,w_1)\}$ or
$\{(u_1,w_1),(w_1,u_1)\}$, we use the convention that all arcs are forward arcs. The \emph{defect}
of an oriented path (or cycle) $\overrightarrow{P}$ is denoted by $\Delta(\overrightarrow{P})$ and
defined to be the number of forward arcs minus the number of backward arcs in $\overrightarrow{P}$. We say that
$\overrightarrow{P}$ is \emph{balanced} if its defect is zero.
 \end{definition}
 \begin{figure}
 	\centering
 	\begin{subfigure}{.4 \textwidth}
 		\centering
 		\begin{tikzpicture}[xscale=1.5,every node/.style={draw,shape=circle,outer sep=1pt,inner sep=1pt,minimum size=.15cm}]
 		\node (1) at (0,0) {$1$};
 		\node (2) at (1,0) {$2$};
 		\node (3) at (1,-1){$3$};
 		\node (4) at (0,-1) {$4$};

 		\draw[thick,->] (1) -- (2);
 		\draw[thick,->,dashed]  (3) -- (2);
 		\draw[thick,->] (3) -- (4);
 		\draw[thick,->,dashed]  (1) -- (4);
 		\end{tikzpicture}\caption{A balanced oriented cycle}\label{fig:balanced_cycle}
 	\end{subfigure}
 	\hspace*{\fill}
 	\begin{subfigure}{.5 \textwidth}
 		\centering
 		\begin{tikzpicture}[xscale=1.5,every node/.style={draw,shape=circle,outer sep=1pt,inner sep=1pt,minimum size=.15cm}]
 		\node (1) at (0,0) {$1$};
 		\node (2) at (1,0) {$2$};
 		\node (3) at (2,0) {$3$};
 		\node (4) at (2.5,-1) {$4$};
 		\node (5) at (1,-1)	{$5$};
 		\node (6) at (0,-1) {$6$};
 		\node (7) at (-1,0) {$7$};
 		
 		\draw[thick,->,dashed ] (2) -- (1);
 		\draw[thick,->,dashed ] (3) -- (2);
 		\draw[thick,->] (3) -- (4);
 		\draw[thick,->] (4) -- (5);
 		\draw[thick,->,dashed ] (6) -- (5);
 		\draw[thick,->] (6) -- (7);		
 		\end{tikzpicture}\caption{A balanced oriented path}\label{fig:balanced_path}
 	\end{subfigure}\caption{Balanced oriented paths}\label{fig:balanced}
 \end{figure}
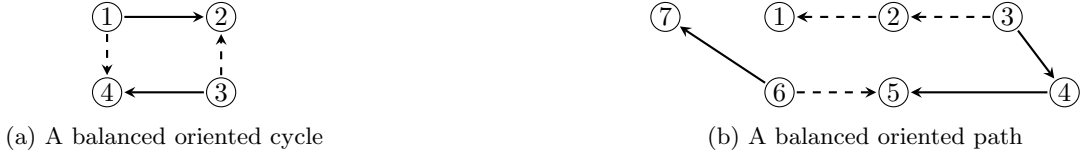

 Figures~\ref{fig:balanced}(\subref{fig:balanced_cycle})
 and~\ref{fig:balanced}(\subref{fig:balanced_path}) depict a balanced oriented cycle and path,
 respectively. While the solid arcs indicate forward arcs, the dashed arcs are backward arcs.
\begin{remark}
  Any balanced oriented path (or cycle) contains an even number of arcs, where half of them are
  forward arcs and half are backward arcs. Obviously, a Hamiltonian cycle, short cycle,
  noose path and noose cycle are non-balanced and all their arcs are forward arcs.
\end{remark}

\begin{lemma}\label{lem:cycles}
  Let $\vect x= (x_{ij})_{\lvert \arcset \rvert \times 1}$, where $(i,j)\in \arcset$, be a
  real-valued vector whose support is an oriented cycle $\overrightarrow C\subseteq E$ that does not contain node $1$. Then
  $A\vect x=\vect 0$ if and only if the following three conditions are satisfied:
	\begin{enumerate}[(i)]
		\item $x_{ij}=-x_{ik}$ if node $i$ has out-degree 2 in $\overrightarrow C$ and $(i,j),(i,k)\in\overrightarrow C$;
		\item $x_{ji}=-x_{ki}$ if node $i$ has in-degree 2 in $\overrightarrow C$ and $(j,i),(k,i)\in\overrightarrow C$;
		\item $x_{ij}=\beta x_{ki}$ if node $i$ has in- and out-degree 1 in
                  $\overrightarrow C$ and $(i,j),(k,i)\in\overrightarrow C$.
	\end{enumerate}
\end{lemma}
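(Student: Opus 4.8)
The plan is to observe that the single matrix equation $A\vect x=\vect 0$ splits into rows, and that all but finitely many of these rows are satisfied for free because the support of $\vect x$ avoids node $1$ and avoids every node not lying on the undirected cycle $C$ underlying $\overrightarrow C$.

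First I would recall the column structure of $A$: the column indexed by an arc $(i,j)$ has a $+1$ in the flow-conservation row of its tail $i$, a $-\beta$ in the flow-conservation row of its head $j$, and (only when $i=1$) an extra $+1$ in the flow-injection row~\eqref{eq:flow_injection}. Since $\overrightarrow C$ does not contain node $1$, no arc of $\overrightarrow C$ is incident with node $1$; hence every column of $A$ indexed by an arc of $\operatorname{supp}(\vect x)$ has a zero in the flow-injection row and in the flow-conservation row of node $1$, so both of those rows of $A\vect x$ vanish identically. Similarly, for a node $i\in\nodeset\setminus\{1\}$ not lying on $C$, no arc incident with $i$ is in the support, so the flow-conservation row at $i$ of $A\vect x$ also vanishes. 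Consequently $A\vect x=\vect 0$ holds if and only if the flow-conservation equation
\[
\sum_{(i,j)\in\overrightarrow C} x_{ij} \;-\; \beta\sum_{(j,i)\in\overrightarrow C} x_{ji} \;=\; 0
\]
holds at every node $i$ lying on $C$.

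Next I would analyze this one equation node by node. Because $C$ is an undirected cycle, each node of $C$ has degree exactly $2$ in $C$, so in $\overrightarrow C$ its (out-degree, in-degree) pair is one of $(2,0)$, $(0,2)$, $(1,1)$, and these are exhaustive. If $i$ has out-degree $2$, with $(i,j),(i,k)\in\overrightarrow C$, the displayed equation reads $x_{ij}+x_{ik}=0$, i.e.\ condition~(i); if $i$ has in-degree $2$, with $(j,i),(k,i)\in\overrightarrow C$, it reads $-\beta(x_{ji}+x_{ki})=0$, i.e.\ condition~(ii); if $i$ has in- and out-degree $1$, with $(i,j),(k,i)\in\overrightarrow C$, it reads $x_{ij}-\beta x_{ki}=0$, i.e.\ condition~(iii). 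Ranging over all nodes of $C$ shows $A\vect x=\vect 0$ is equivalent to the simultaneous validity of (i), (ii) and (iii). I would also note that the degenerate conventions of Definition~\ref{def:balanced_cycle} for a $2$-cycle are consistent with this: both of its nodes have in- and out-degree one, so (iii) applies at each, forcing $\vect x\equiv\vect 0$ since $\beta\neq 1$.

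There is no genuine obstacle here; the argument is bookkeeping about the columns of $A$. The only points needing care are (a) verifying that the flow-injection row and the node-$1$ conservation row are annihilated by any vector supported on $\overrightarrow C$, which is precisely where the hypothesis that $\overrightarrow C$ omits node $1$ enters, and (b) using the degree-$2$ property of cycles to see that the cases (i)--(iii) exhaust the possibilities, so no node of $C$ contributes a constraint outside this list.
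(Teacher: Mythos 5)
Your proposal is correct and is essentially the paper's argument: the paper dismisses the lemma as ``an immediate consequence of constraint~(\ref{eq:flow_conservation_i})'', and your write-up simply spells out that row-by-row bookkeeping (the node-$1$ and injection rows vanish because $\overrightarrow C$ avoids node $1$, and the three degree patterns $(2,0)$, $(0,2)$, $(1,1)$ at nodes of $C$ yield exactly conditions (i)--(iii)). No discrepancy to report.
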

\begin{proof}
	This is an immediate consequence of constraint~(\ref{eq:flow_conservation_i}).
\end{proof}
\begin{lemma}\label{lem:balanced_cycles}
  Let $\overrightarrow C\subseteq E$ be an oriented cycle not containing node $1$. The set of
  columns of $A$ corresponding to the arcs in $\overrightarrow C$ is linearly dependent if and only if
  $\overrightarrow C$ is balanced.
\end{lemma}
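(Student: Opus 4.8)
The plan is to reduce the statement to a one‑dimensional kernel computation. The columns of $A$ indexed by $\overrightarrow{C}$ are linearly dependent precisely when there is a nonzero vector $\vect x\in\reals^{\lvert\arcset\rvert}$ with $x_{ij}=0$ for $(i,j)\notin\overrightarrow{C}$ and $A\vect x=\vect 0$. Write the underlying undirected cycle as $v_0-v_1-\dotsb-v_{k-1}-v_0$, let $a_i\in\{(v_{i-1},v_i),(v_i,v_{i-1})\}$ be the arc of $\overrightarrow{C}$ on the $i$-th edge, and put $x_i:=x_{a_i}$ (indices taken mod $k$). Since $\overrightarrow{C}$ misses node $1$, both the constraint~\eqref{eq:flow_conservation_1} and the injection constraint~\eqref{eq:flow_injection} are automatically satisfied by such an $\vect x$, so $A\vect x=\vect 0$ is equivalent to the node conditions of Lemma~\ref{lem:cycles}.

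First I would turn those conditions into a single first-order recursion around the cycle. At the node $v_i$ shared by $a_i$ and $a_{i+1}$ there are four orientation patterns, and reading Lemma~\ref{lem:cycles}(i)--(iii) off in each case yields $x_{i+1}=c_i x_i$, where $c_i=\beta$ when $a_i,a_{i+1}$ are both forward ($v_i$ a ``through'' node with $a_i$ entering), $c_i=\beta^{-1}$ when both are backward ($v_i$ a through node with $a_i$ leaving), and $c_i=-1$ when exactly one of them is forward ($v_i$ a source or a sink of $\overrightarrow{C}$, so that Lemma~\ref{lem:cycles}(i) or~(ii) applies). The degenerate length-two cycle $\{(u,w),(w,u)\}$ is covered by the same bookkeeping under the convention that both of its arcs count as forward. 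Propagating this recursion once around the cycle forces $x_1=\bigl(\prod_{i=1}^k c_i\bigr)x_1$; since every $c_i\ne 0$, there is a nonzero solution supported on $\overrightarrow{C}$ if and only if $\prod_{i=1}^k c_i=1$, and otherwise $\vect x=\vect 0$ is the only solution. Thus the columns indexed by $\overrightarrow{C}$ are linearly dependent if and only if $\prod_{i=1}^k c_i=1$.

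Finally I would evaluate $\prod_{i=1}^k c_i$. Encode the orientations as a cyclic word $s_1\dotsb s_k$ over $\{F,B\}$ with $s_i=F$ iff $a_i$ is a forward arc; then the number of forward arcs is $f=\lvert\{i:s_i=F\}\rvert$ and $\Delta(\overrightarrow{C})=f-(k-f)$. Letting $\#FF,\#FB,\#BF,\#BB$ count the cyclically consecutive pairs $(s_i,s_{i+1})$ of each type, we have $\prod_{i=1}^k c_i=\beta^{\#FF-\#BB}(-1)^{\#FB+\#BF}$. In any cyclic binary word $\#FB=\#BF$ (the $F\!\to\!B$ and $B\!\to\!F$ transitions alternate around the cycle), so the sign factor is $(-1)^{2\#FB}=1$; and from $f=\#FF+\#FB$ and $k-f=\#BB+\#BF$ together with $\#FB=\#BF$ one gets $\#FF-\#BB=f-(k-f)=\Delta(\overrightarrow{C})$. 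Hence $\prod_{i=1}^k c_i=\beta^{\Delta(\overrightarrow{C})}$, which equals $1$ if and only if $\Delta(\overrightarrow{C})=0$ because $\beta\in(0,1)$. Combined with the previous step, this gives the claim.

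The parts needing the most care will be the four-case identification of the multipliers $c_i$ with the relations of Lemma~\ref{lem:cycles} (one must track which incident arc is the in-arc and which is the out-arc at each node, and treat source and sink nodes separately), and the elementary identity $\#FF-\#BB=\Delta(\overrightarrow{C})$ for cyclic binary words; I do not anticipate a genuine obstacle once the cyclic encoding is set up.
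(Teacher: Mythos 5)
Your proof is correct, and its core is the same as the paper's: reduce to the kernel conditions of Lemma~\ref{lem:cycles} and propagate a multiplicative relation once around the cycle. Where you diverge is in the endgame. The paper fixes a starting node $i^*$, derives by induction an explicit closed form $x_{i_l j_l}=(-1)^{\sigma_l}\beta^{\Delta(\overrightarrow{P}_l)}$ (with an extra $\beta^{-1}$ for forward arcs) for every prefix of the cycle, and then closes the loop by a four-case analysis of the orientations of the first and last arcs at $i^*$, each case separately yielding $\Delta(\overrightarrow{C})=0$. You instead package each node relation as a single multiplier $c_i\in\{\beta,\beta^{-1},-1\}$, observe that a nonzero solution exists iff $\prod_i c_i=1$, and evaluate the product in one stroke via the cyclic-word identities (the number of $F\!\to\!B$ transitions equals the number of $B\!\to\!F$ transitions, and the count of $FF$ pairs minus $BB$ pairs equals the defect), obtaining $\prod_i c_i=\beta^{\Delta(\overrightarrow{C})}$. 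This buys you a cleaner, genuinely two-directional argument (the paper's converse is only asserted to follow ``by reading the proof backward'') and eliminates the case analysis at the closing node; the paper's version, in exchange, records the explicit intermediate values $x_{i_lj_l}$, which is mildly more informative. Your handling of the length-two cycle and of the trivially satisfied rows for node $1$ is also correct.
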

\begin{proof}
  We first prove the `only if' statement by showing that if the set of columns of $A$ corresponding to $\overrightarrow{C}$ is linearly dependent, then $\overrightarrow{C}$ is a balanced oriented
  cycle. As all results in this proof are derived in `if and only if' condition, the converse, that
  is the `if' statement, can easily be shown by following the `only if' proof backward. Let
  $A_{\overrightarrow{C}}$ denote the $(n+1)\times k$ submatrix constructed by choosing the columns of $A$ corresponding to arcs in the oriented cycle $\overrightarrow{C}=\{e_1,e_2,\ldots,e_k\}$, where
  $e_l=(i_l,j_l)\in E$ for $l=1,\ldots,k$ and $k<n$. Define $\overrightarrow{P}_l=\{e_1,e_2,\ldots,e_l\}$ as
  the oriented subpath of the oriented cycle $\overrightarrow{C}$ for $l=1,\ldots,k$. Let $i^*$ be the common
  node of arcs $e_1$ and $e_k$, and $\sigma_l$ be the number of nodes in $\overrightarrow{P}_l$ having in- or
  out-degree 2. For $l=k$, we do not consider $i^*$ in the calculation of $\sigma_k$. From our
  linear dependency assumption, we know that there exists a non-zero $k\times 1$ vector $\vect{x}$
  such that $A_{\overrightarrow{C}}\vect{x} = \vect{0}$. Lemma 1 prescribes that $x_{i_l j_l} \neq 0$ for all
  $(i_l ,j_l)\in\overrightarrow{C}$. So, without loss of generality, we can re-scale $\vect{x}$ and define
	\begin{align}
          \medskip x_{i_1 j_1} & = \begin{cases}
            1\ \ \ \ \ \mbox{if } i_1=i^*\ \mbox{(i.e., $e_1$ is a forward arc)}, \\
            \beta^{-1}\ \ \mbox{if } j_1=i^*\ \mbox{(i.e., $e_1$ is a backward arc)}.
		\end{cases}\label{eq:x_i1j1}
	\end{align}
	By applying Lemma 1 and induction on $l$, we can show the following equality for the
        component of vector $\vect{x}$ corresponding to the last arc in $\overrightarrow{P}_l$, for
        $l=1,\dots,k$: 
	\begin{align}
		\medskip x_{i_l j_l} & = \begin{cases}
			(-1)^{\sigma_l} \beta^{\Delta(\overrightarrow{P}_l)-1}\ \ \mbox{if } e_l=(i_l,j_l) \mbox{ is a forward arc}, \\
			(-1)^{\sigma_l} \beta^{\Delta(\overrightarrow{P}_l)}\ \ \ \ \ \mbox{if } e_l=(i_l,j_l) \mbox{ is a backward arc}.
		\end{cases}\label{eq:x_iljl}
	\end{align}	
	As the number of nodes with in- or out-degree 2 in the oriented cycle $\overrightarrow{C}$ is even,
        $i^*$ has in- or out-degree 2, if and only if $\sigma_k$ is odd. For node $i^*$ in
        $\overrightarrow{P}_k$ (which is identical to $\overrightarrow{C}$), we have the following four possible cases:
	\begin{description}
        \item[Case 1] $i_1=i_k=i^*$. In this case, $e_1=(i^*,j_1)$ is a forward arc and
          $e_k=(i^*,j_k)$ is a backward arc. As the out-degree of node $i^*$ is 2, $\sigma_k$ is
          an odd number. Consequently, from equations~\eqref{eq:x_i1j1} and~\eqref{eq:x_iljl} we have $x_{i^* j_1}=1$ and
          $x_{i^* j_k} = -\beta^{\Delta(\overrightarrow{C})}$, respectively. However, Lemma~\ref{lem:cycles} prescribes that
          $x_{i^* j_1} = -x_{i^* j_k}$. Hence, in this case, we must have $\Delta(\overrightarrow{C})=0$.

        \item[Case 2] $i_1=j_k=i^*$. In this case, both $e_1=(i^*,j_1)$ and $e_k=(i_k,i^*)$ are
          forward arcs. As the in- and out-degree of node $i^*$ are 1, $\sigma_k$ is an even
          number. Consequently, from equations~\eqref{eq:x_i1j1} and~\eqref{eq:x_iljl} we have $x_{i^* j_1}=1$ and
          $x_{i_k i^*} = \beta^{\Delta(\overrightarrow{C})-1}$, respectively. However, Lemma~\ref{lem:cycles} prescribes that
          $x_{i^* j_1} = \beta x_{i_k i^*}$. Hence, in this case, we must have $\Delta(\overrightarrow{C})=0$.

        \item[Case 3] $j_1=i_k=i^*$. In this case, both $e_1=(i_1,i^*)$ and $e_k=(i^*,j_k)$ are
          backward arcs. As the in- and out-degree of node $i^*$ is 1, $\sigma_k$ is an even
          number. Consequently, from equations~\eqref{eq:x_i1j1} and~\eqref{eq:x_iljl} we have $x_{i_1 i^*}=\beta^{-1}$ and
          $x_{i^* j_k} = \beta^{\Delta(\overrightarrow{C})}$, respectively. However, Lemma 1 prescribes that
          $x_{i^* j_k} = \beta x_{i_1 i^*}$. Hence, in this case, we must have $\Delta(\overrightarrow{C})=0$.

        \item[Case 4] $j_1=j_k=i^*$. In this case, $e_1=(i_1, i^*)$ is a backward arc and
          $e_k=(i_k, i^*)$ is a forward arc. As the in-degree of node $i^*$ is 2, $\sigma_k$ is an
          odd number. Consequently, from equations~\eqref{eq:x_i1j1} and~\eqref{eq:x_iljl} we have $x_{i_1 i^*}=\beta^{-1}$ and
          $x_{i_k i^*} = -\beta^{\Delta(\overrightarrow{C})-1}$, respectively. However, Lemma~\ref{lem:cycles} prescribes that
          $x_{i_1 i^*} = -x_{i_k i^*}$. Hence, in this case, we must have $\Delta(\overrightarrow{C})=0$.
	\end{description}
	Thus, $\overrightarrow{C}$ is a balanced oriented cycle.
\end{proof}
\begin{lemma}\label{lem:good_augmented_trees}
  If $H=(W,B)$, where $W\subseteq V\setminus\{1\}$ and $B \subseteq \arcset$, is a connected subgraph of $G$ with
  $\lvert B\rvert=\lvert W\rvert$, then the set of columns of $A$ corresponding to arcs in $B$ is
  linearly dependent if and only if the unique oriented cycle in $B$ is balanced.
\end{lemma}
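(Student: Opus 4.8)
Since $H=(W,B)$ is connected with $\lvert B\rvert=\lvert W\rvert$, its cyclomatic number is $1$ (it is a spanning tree of $H$ plus one extra arc), so $H$ has exactly one undirected cycle; orienting it according to the arc directions in $B$ produces the unique oriented cycle $\overrightarrow{C}\subseteq B$, and since $W\subseteq\nodeset\setminus\{1\}$ this cycle avoids node $1$. The plan is to induct on $\lvert B\setminus\overrightarrow{C}\rvert$, the number of arcs of $B$ lying outside the cycle. When this number is $0$ we have $B=\overrightarrow{C}$, an oriented cycle not containing node $1$, and the statement is exactly Lemma~\ref{lem:balanced_cycles}; this is the base case.

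For the inductive step assume $B\neq\overrightarrow{C}$. Contracting $\overrightarrow{C}$ to a single vertex turns $H$ into a tree on at least two vertices, so $H$ has a leaf $v$ not lying on $\overrightarrow{C}$; let $e$ be the unique arc of $B$ incident with $v$. Consider the row of $A$ indexed by the flow-conservation constraint~\eqref{eq:flow_conservation_i} at node $v$, which exists because $v\neq 1$: its only nonzero entries occur in columns corresponding to arcs incident with $v$, and the only such arc in $B$ is $e$, with entry $+1$ if $e$ leaves $v$ and $-\beta$ if $e$ enters $v$ --- in either case nonzero. Hence, in any relation $\sum_{f\in B}c_f\,(\text{column of }A\text{ for }f)=\vect 0$ among the columns of $A_B$, reading off row $v$ forces $c_e=0$. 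Therefore the columns indexed by $B$ are linearly dependent if and only if those indexed by $B\setminus\{e\}$ are. The subgraph $H'=(W\setminus\{v\},B\setminus\{e\})$ is again connected, has $\lvert B\setminus\{e\}\rvert=\lvert W\setminus\{v\}\rvert$ with $W\setminus\{v\}\subseteq\nodeset\setminus\{1\}$, and its unique oriented cycle is still $\overrightarrow{C}$; applying the induction hypothesis gives the claim.

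Note that the `if' direction is immediate even without the induction: if $\overrightarrow{C}$ is balanced, then Lemma~\ref{lem:balanced_cycles} provides a nonzero vector supported on $\overrightarrow{C}\subseteq B$ in the kernel of $A_B$, so the columns indexed by $B$ are linearly dependent. The content lies in the `only if' direction, which the leaf-peeling reduces to Lemma~\ref{lem:balanced_cycles}. I expect the only delicate points to be the graph-theoretic bookkeeping --- confirming that $H$ is unicyclic, that a leaf off the cycle exists whenever $B\neq\overrightarrow{C}$, and that deleting such a leaf keeps the subgraph connected with matching vertex and arc counts --- together with the elementary but crucial observation that the row of $A$ at a leaf vertex singles out exactly one column of $A_B$, which is precisely what licenses the deletion of $e$ without affecting linear (in)dependence.
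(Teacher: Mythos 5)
Your proof is correct and rests on exactly the same key observation as the paper's: the flow-conservation row at a degree-one node $v\neq 1$ meets only one column of $A_B$, so that column's coefficient in any dependence must vanish, reducing everything to Lemma~\ref{lem:balanced_cycles} on the cycle. The paper packages this as a minimality argument (a minimal dependent subset $B'$ can have no degree-one node, hence equals the cycle) rather than your leaf-peeling induction, but the two are organizationally dual versions of one argument; if anything, your version spells out more explicitly the unicyclic bookkeeping that the paper passes over.
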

\begin{proof}
  If the oriented cycle in $B$ is balanced then $B$ is linearly dependent by
  Lemma~\ref{lem:balanced_cycles}. For the converse, suppose that $B$ is linearly dependent and let
  $B'\subseteq B$ be a minimal dependent subset. This implies that there exists a nonzero vector
  $\vect x$ with $A_{B'}\vect x=\vect 0$, where $A_{B'}$ is a submatrix constructed by the columns
  of $A$ corresponding to $B'$, such that for every arc $(i,j)\in B'$, the
  component $x_{ij}\neq 0$. If the subgraph $H'=(W,B')$ had a node $i$ of degree 1, then
  constraint~(\ref{eq:flow_conservation_i}) for this node would imply that $x_{ij}=0$ (if
  $(i,j)\in B'$) or $x_{ji}=0$ (if $(j,i)\in B'$). Since this contradicts the minimality of $B'$, we
  conclude that $B'$ is exactly the arc set of the oriented cycle in
  $B$. Lemma~\ref{lem:balanced_cycles} prescribes that $B'$ is balanced.
\end{proof}

In Lemma~\ref{lem:zero_arcs} and Theorem~\ref{thm:characterising_nonHamiltonian_bases} we use the notation $H=(V,B)$, where $B\subseteq E$ with $\lvert B \rvert=n+1$, is a subgraph of the graph $G$ with connected components $H_1,H_2,\dotsc,H_m$. Moreover, $H_k=(V_k,B_k)$, where $V_k$ and $B_k$ denote the set of nodes and arcs comprising the connected component $H_k$ for $k=1,\dotsc,m$, respectively. Without loss of generality, we assume that $1\in V_1$. Let $\rho(H_1)$ denote a subgraph of $H_1$ constructed by repeatedly removing all nodes in $H_1$ with degree equal to 1. We use the notation $V_\rho$ and $B_\rho$ to denote the node set and arc set of the graph $\rho(H_1)$. It should be noted that the concept of constructing the subgraph $\rho(H_1)$ is analogous to the 2-core of the graph $H_1$ (see, for example \cite{bollobas1984evolution} or \cite{Luczak1991}).

\begin{lemma}\label{lem:zero_arcs}
  Consider the subgraph $H = (\nodeset,B)$ with connected components $H_1,\dots,H_m$. Let $A_B$ be a submatrix of $A$ corresponding to the arcs of
  $B$. If $A_B$ is an invertible matrix and $\vect{x} = A_B^{-1} \vect{b}$, then
  \begin{enumerate}[(i)]
  \item $\lvert B_k\rvert=
			\begin{cases}
			\lvert V_1 \rvert+1&\mbox{for } k=1,\\
			
			\lvert V_k\rvert  & \text{for }k = 2,\ldots,m;\\
			\end{cases}$\label{lem:condition_1}
  \item $x_{ij} = 0$ for each $(i,j) \in B\setminus B_\rho$.		
  \end{enumerate}
\end{lemma}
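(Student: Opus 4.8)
The plan is to exploit the fact that, after grouping the rows of $A_B$ by the connected components of $H$ and treating node $1$ as ``doubled'' (since it owns both its flow-conservation row~\eqref{eq:flow_conservation_1} and the injection row~\eqref{eq:flow_injection}), the matrix $A_B$ is block diagonal. Concretely, assign to $H_1$ the row block $R_1$ consisting of the $\lvert V_1\rvert$ flow-conservation rows of the nodes in $V_1$ together with the injection row, and to each $H_k$ with $k\ge 2$ the row block $R_k$ consisting of the $\lvert V_k\rvert$ flow-conservation rows of the nodes in $V_k$. These blocks partition the $n+1$ rows of $A$, and $\lvert R_1\rvert=\lvert V_1\rvert+1$, $\lvert R_k\rvert=\lvert V_k\rvert$ for $k\ge 2$. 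Since every arc of $B_k$ has both endpoints in $V_k$, its column of $A$ is supported on $R_k$; hence $A_{R_k,B_{k'}}=\vect 0$ for $k\ne k'$. For part~(i), the $\lvert B_k\rvert$ columns indexed by $B_k$, being supported on the $\lvert R_k\rvert$ rows of $R_k$, can be linearly independent only if $\lvert B_k\rvert\le\lvert R_k\rvert$, so invertibility of $A_B$ forces this inequality for every $k$; summing and using $\sum_k\lvert B_k\rvert=\lvert B\rvert=n+1=\sum_k\lvert R_k\rvert$ gives equality throughout, which is exactly the claim in~(i). Equality makes each diagonal block $A_{R_k,B_k}$ square, and a nonzero kernel vector of such a block would extend by zeros to a nonzero kernel vector of $A_B$ (because the off-diagonal blocks vanish); hence every $A_{R_k,B_k}$ is invertible.

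For part~(ii) I would first dispose of the components $H_k$ with $k\ge 2$: the right-hand side $\vect b$ is zero on every flow-conservation row of $\nodeset\setminus\{1\}$, so $\vect b$ restricted to $R_k$ is $\vect 0$, and restricting $A_B\vect x=\vect b$ to the block $R_k$ gives $\vect x_{B_k}=A_{R_k,B_k}^{-1}\vect b_{R_k}=\vect 0$. It remains to show $x_{ij}=0$ for every arc of $B_1\setminus B_\rho$, i.e. for every arc of $H_1$ deleted during the peeling process that produces $\rho(H_1)$. Here I would induct on the peeling level: assuming every arc deleted at an earlier level carries $x$-value zero, consider a node $i\in V_1$ that has degree $1$ at the current level and let $e$ be its unique remaining incident arc. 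Because the nonbasic coordinates of $\vect x$ vanish and $i\in V_1$, equation~\eqref{eq:flow_conservation_i} at $i$ involves only the basic arcs incident to $i$, all of which lie in $B_1$; all of them except $e$ were deleted at earlier levels and so carry value zero by the inductive hypothesis, whence~\eqref{eq:flow_conservation_i} collapses to $x_e=0$ (up to the harmless factor $-\beta$ when $e$ points into $i$).

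The delicate point is that this last step needs $i\ne 1$, so the argument must also certify that node $1$ is never deleted by the peeling. This is where the injection constraint is essential: subtracting~\eqref{eq:flow_injection} from~\eqref{eq:flow_conservation_1} yields $\sum_{j\in\inone}x_{j1}=\beta^{\,n-1}$, while~\eqref{eq:flow_injection} itself reads $\sum_{j\in\outone}x_{1j}=1$, and both right-hand sides are nonzero. If node $1$ ever became degree $1$ at some level, then by the same inductive hypothesis all but one of its incident basic arcs would already carry value zero, and a single arc (having a single orientation relative to node $1$) cannot keep both of those sums nonzero --- a contradiction. Hence node $1$ survives, $1\in V_\rho$, and the induction applies uniformly, completing~(ii). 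Thus the main obstacle is not a computation but the bookkeeping: interleaving the two claims ``every peeled arc is zero'' and ``node $1$ is never peeled'' into a single induction on the peeling level, while carefully using that the flow-conservation equation at a node sees only the basic arcs incident to it and that those arcs stay within the relevant component.
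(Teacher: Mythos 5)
Your proposal is correct and follows essentially the same route as the paper: the same block decomposition of $A_B$ by components (with node $1$'s two rows attached to $H_1$) gives part~(i) and kills the coordinates on $B_k$ for $k\ge 2$, and the remaining claim rests on the same observation that flow conservation at a degree-one node forces its unique incident arc to carry value zero. The only cosmetic difference is that you run this as an induction along the peeling order (and explicitly verify via the injection constraint that node $1$ is never peeled), whereas the paper argues directly that the support of $\vect x$ has minimum degree two and hence lies in the 2-core $B_\rho$; both are valid.
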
	

\begin{proof}
  \begin{enumerate}[(i)]
  \item Since $A_B$ is an invertible $(\lvert \nodeset \rvert+1) \times \lvert B
    \rvert$ square matrix,  $ \rank(A_B)=\lvert \nodeset \rvert +1= \lvert B
    \rvert=
    n+1$. Moreover, it follows from
    constraints~\eqref{eq:flow_conservation_1}--\eqref{eq:flow_injection} that 
    $A_B$ has a block structure, with non-overlapping blocks
    $A_{B_1},\dots,A_{B_m}$ such that $A_{B_1}$ is a $(\lvert \nodeset_1 \rvert+1) \times \lvert B_1
    \rvert$ matrix corresponding to the nodes and arcs in $V_1$ and
    $B_1$, respectively, and $A_{B_k}$ is a $\lvert \nodeset_k \rvert \times \lvert B_k
    \rvert$ matrix corresponding to the nodes and arcs in $V_k$ and $B_k$, respectively, for $k
    =2,\dots,m$.  Since there are no arcs between distinct components $H_i$ and $H_j$ for $i,j \in
    \{1,\dots,m\}$ and $i\ne j$, and $ \rank(A_B)=\lvert
    \nodeset\rvert+1$, we should have $ \rank(A_{B_1})=\lvert \nodeset_1 \rvert +1
    $ and $ \rank(A_{B_k})=\lvert \nodeset_k \rvert$, for $k= 2,\dots,
    m$. Similarly, as $\rank(A_{B})\lvert
    B\rvert$, we should also have $\rank(A_{B_k})=A_{B_k}$ equal to $ \lvert B_k \rvert$, for $k=
    1,\dots, m$. Thus,
	\[
	\lvert B_k\rvert=
	\begin{cases}
	\lvert\nodeset_1\rvert+1&\mbox{for } k=1,\\	
	\lvert\nodeset_k\rvert  & \text{for }k = 2,\ldots,m.
	\end{cases}\]	
\item Let $B'\subseteq B$ be the support of $\vect{x}$. We define $\vect{x}^k$ as the restriction of $\vect{x}$ to the coordinates in $B_k$ for $k=2,\dotsc,m$. Constraint~(\ref{eq:flow_conservation_i}) implies that $A_{B_k}\vect{x}^k = \vect{0}$, and since  $A_{B_k}$ has full column rank, we obtain $\vect{x}^k = \vect{0}$ for $k=2,\ldots,m$. Hence, $B'\subseteq B_1$. Moreover, if the subgraph $H'=(V_1,B')$ has a node $i$ of degree 1, then constraint~(\ref{eq:flow_conservation_i}) for node $i$ implies that  $x_{ij}=0$ (if $(i,j)\in B'$) or $x_{ji}=0$
(if $(j,i)\in B'$), which contradicts the definition of $B'$. Thus, all nodes in the subgraph $H'$ have degree at least 2 implying that $B'\subseteq B_\rho$. \qedhere
\end{enumerate}	
\end{proof}

\begin{theorem}\label{thm:characterising_nonHamiltonian_bases}
  Consider a subgraph $H = (\nodeset,B)$, with connected components $H_1,\dotsc,H_m$, $m \in \{ 1, \dotsc, \lfloor(n-1)/2\rfloor \}$. The arc set
  $B$ is a non-Hamiltonian basis of the polytope $\hBeta(G)$ if and only if the following three conditions
  are satisfied:
  \begin{enumerate}[(i)]
  \item $\lvert B_k\rvert=
    \begin{cases}
      \lvert\nodeset_1\rvert+1&\mbox{for } k=1,\\
      \lvert\nodeset_k\rvert  & \text{for }k = 2,\ldots,m;\\
    \end{cases}$\label{condition_1}
  \item If $m>1$, $H_k$ does not contain a balanced oriented cycle, for $k=2,\dotsc,m$; \label{condition_2}
  \item The subgraph $\rho(H_1)$ is the union of a short cycle and a noose
    path.\label{condition_3}
  \end{enumerate}
\end{theorem}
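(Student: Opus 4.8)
The plan is to prove the two implications separately on top of the preparatory results. Condition~\ref{condition_1} will be read off directly from Lemma~\ref{lem:zero_arcs}, condition~\ref{condition_2} from Lemma~\ref{lem:good_augmented_trees} applied to the unicyclic components $H_2,\dots,H_m$, and condition~\ref{condition_3} from Theorem~\ref{thm:Ejov} (which describes the support of a non-Hamiltonian extreme point) combined with Lemma~\ref{lem:zero_arcs} (which confines that support to $\rho(H_1)$) and one extra combinatorial fact. That extra fact is a rigidity statement for $2$-cores, which I would isolate as a sublemma: \emph{if $G'$ is a subgraph of $G''$, both are connected, both have minimum degree at least two, and $\lvert E(G'')\rvert-\lvert V(G'')\rvert=\lvert E(G')\rvert-\lvert V(G')\rvert$, then $G'=G''$.} I would prove it by a handshake count: writing $d$ for the number of vertices of $G''$ outside $G'$, the equal-excess hypothesis forces $\lvert E(G'')\rvert-\lvert E(G')\rvert=d$, and then
\[
2\lvert E(G'')\rvert=\sum_{v\in V(G')}\deg_{G''}(v)+\sum_{v\in V(G'')\setminus V(G')}\deg_{G''}(v)\geq 2\lvert E(G')\rvert+2d=2\lvert E(G'')\rvert,
\]
so equality holds throughout: no new edge meets an old vertex and every new vertex has degree exactly two. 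Then the new vertices and edges would form a disjoint union of cycles detached from the nonempty $G'$, contradicting connectivity of $G''$ unless $d=0$, and $d=0$ forces $G'=G''$.

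For the ``only if'' direction, suppose $B$ is a non-Hamiltonian basis, so $A_B$ is invertible and $\vect x=A_B^{-1}\vect b$ is a non-Hamiltonian extreme point of $\hBeta(G)$. Condition~\ref{condition_1} is exactly Lemma~\ref{lem:zero_arcs}(i). For condition~\ref{condition_2}: if some $H_k$ with $k\geq 2$ contained a balanced oriented cycle, then—since condition~\ref{condition_1} makes $H_k$ a connected subgraph on $V_k\subseteq\nodeset\setminus\{1\}$ with $\lvert B_k\rvert=\lvert V_k\rvert$—Lemma~\ref{lem:good_augmented_trees} would make the columns of $A$ indexed by $B_k$, hence those indexed by $B$, linearly dependent, contradicting invertibility of $A_B$. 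For condition~\ref{condition_3}: Theorem~\ref{thm:Ejov} says $\supportgraph(G,\vect x)$ is the union of a short cycle and a noose path, and Lemma~\ref{lem:zero_arcs}(ii) puts its arc set inside $B_\rho$. I then upgrade this inclusion to $\rho(H_1)=\supportgraph(G,\vect x)$ via the sublemma, after checking its hypotheses: $\supportgraph(G,\vect x)$ is connected (both the short cycle and the noose path contain node~$1$), has minimum degree at least two, and satisfies $\lvert E\rvert-\lvert V\rvert=1$ because—by Theorem~\ref{thm:Ejov}—every non-isolated node has out-degree one except the splitting node with out-degree two, so $\lvert E\rvert=\lvert V\rvert+1$; meanwhile $\rho(H_1)$ is the $2$-core of the connected graph $H_1$, hence connected with minimum degree at least two, and condition~\ref{condition_1} together with the identity $\lvert B_1\rvert-\lvert B_\rho\rvert=\lvert V_1\rvert-\lvert V_\rho\rvert$ (each step of the $2$-core reduction deletes one node with one arc) gives $\lvert B_\rho\rvert-\lvert V_\rho\rvert=1$.

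For the ``if'' direction, assume conditions~\ref{condition_1}--\ref{condition_3}, and verify \textbf{B1}, then \textbf{B2}, then non-Hamiltonicity. By condition~\ref{condition_1}, $A_B$ is block diagonal with square blocks $A_{B_1},\dots,A_{B_m}$; for $k\geq 2$, condition~\ref{condition_2} with Lemma~\ref{lem:good_augmented_trees} gives that $A_{B_k}$ is nonsingular. For $A_{B_1}$ I show $A_{B_1}\vect y=\vect 0\Rightarrow\vect y=\vect 0$: node~$1$ lies in $\rho(H_1)$, so it is never a degree-one vertex during leaf-peeling, and each leaf-deletion uses the flow-conservation row of a vertex $\neq 1$ (right-hand side $0$) to force the unique incident coordinate of $\vect y$ to vanish, reducing the system to $\rho(H_1)$; there, using condition~\ref{condition_3}, the flow-conservation equations on the short cycle together with the injection equation~\eqref{eq:flow_injection} force the short-cycle coordinates to vanish and then the noose-path coordinates, the last step using $1-\beta^{\ell}\neq 0$ for $\ell\geq 1$. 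Thus \textbf{B1} holds, and Lemma~\ref{lem:zero_arcs}(ii) shows $\vect x=A_B^{-1}\vect b$ is supported within $B_\rho$; solving the same flow system with the genuine right-hand side (exactly the computation in the proof of Proposition~\ref{prop:Hamilton_basis}) yields strictly positive values on every arc of $\rho(H_1)$, so \textbf{B2} holds. The resulting extreme point has support exactly $\rho(H_1)$; since $\lvert E(\rho(H_1))\rvert=\lvert V(\rho(H_1))\rvert+1$, some node has out-degree at least two, so $\rho(H_1)$ is not a Hamiltonian cycle, and $B$ is a non-Hamiltonian basis. (The range $m\leq\lfloor(n-1)/2\rfloor$ is automatic: $V_1$ carries a short cycle and a noose path, so $\lvert V_1\rvert\geq 3$, and each $V_k$ with $k\geq2$ carries a cycle, so $\lvert V_k\rvert\geq2$.)

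The step I expect to be the main obstacle is promoting the inclusion $\supportgraph(G,\vect x)\subseteq\rho(H_1)$ to equality in the ``only if'' direction: the inclusion is essentially free from Lemma~\ref{lem:zero_arcs}, but excluding the possibility that the $2$-core is strictly larger than the support—that some arc of the $2$-core carries value zero—is the crux, and it forces one to pin both objects down by their degree/Betti-number data, which is precisely what the rigidity sublemma does. A secondary fiddly point is making the leaf-peeling bookkeeping for $A_{B_1}$ exact (each deleted row meets the columns of $B_1$ in a single nonzero, so the rank drops by precisely one) and carrying out the explicit but routine flow computation on a short-cycle-plus-noose-path support for \textbf{B2}, including the case distinction on where the splitting node sits.
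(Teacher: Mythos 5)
Your proposal is correct and follows essentially the same route as the paper's proof: condition (i) from Lemma~\ref{lem:zero_arcs}, condition (ii) from Lemma~\ref{lem:good_augmented_trees}, condition (iii) from Theorem~\ref{thm:Ejov} plus the inclusion of the support in $\rho(H_1)$, and the same block-diagonal and leaf-peeling argument for the converse. The only difference is one of explicitness: your rigidity sublemma for $2$-cores spells out the step the paper dispatches with ``it follows that the support is identical to $\rho(H_1)$'', and you replace the paper's appeals to Theorem~\ref{thm:Ejov} for the linear independence and non-negativity on $B_\rho$ with direct flow computations --- both are sound fillings-in of the same argument.
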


\begin{proof}  Let $B$ be a non-Hamiltonian basis of the polytope
$\hBeta(G)$ corresponding to an extreme point $\vect{x}$, and let $A_B$ be the associated submatrix
of $A$. Since $B$ is a basis, $A_B$ is invertible. Consequently,
conditions~\eqref{condition_1} and \eqref{condition_2} follow from lemmas~\ref{lem:zero_arcs} and
\ref{lem:good_augmented_trees}, respectively. Now, consider the subgraph $\rho(H_1)$. By
Lemma~\ref{lem:zero_arcs}, we know that the support $\supportgraph(G,\vect{x})$ is a subgraph of
$\rho(H_1)$. Moreover, as we remove an equal number of nodes and arcs to construct $\rho(H_1)$,
condition~\eqref{condition_1} implies that $\lvert B_\rho\rvert = \lvert V_\rho\rvert + 1$. It
follows that the support $\supportgraph(G,\vect{x})$ is identical to the graph $\rho(H_1)$. Hence,
by Theorem~\ref{thm:Ejov}, we can conclude condition~\eqref{condition_3}, that is $\rho(H_1)$ is the
union of a short cycle and a noose path.
   	
Conversely, suppose that conditions~\eqref{condition_1}--\eqref{condition_3} are satisfied. In order
to prove that $B$ is a non-Hamiltonian basis, we should only show that conditions \textbf{B1} and
\textbf{B2} hold, implying that $B$ is a feasible basis. Then, condition~\eqref{condition_3} with
Theorem~\ref{thm:Ejov} prescribe that $B$ is a non-Hamiltonian basis. If $A_{B_k}$ denotes a
submatrix of $A$ corresponding to the node and arc sets $V_k$ and $B_k$, respectively, for
$k=1,\ldots,m$, condition~\eqref{condition_1} implies that each $A_{B_k}$ is a square matrix and so
$A_B$ has a block diagonal structure. Hence, we have $\det(A_B) = \Pi_{k=1}^m\det(A_{B_k})$ and
$A_B$ is invertible if $A_{B_k}$ is invertible for all $k=1,\dots,m$. Consequently, to show that
$A_B$ is invertible, we need to show that each submatrix $A_{B_k}$ is
invertible. Lemma~\ref{lem:good_augmented_trees} with
conditions~\eqref{condition_1}--\eqref{condition_2} imply that $A_{B_k}$ is invertible for
$k = 2,\dotsc, m$. It remains to show that $A_{B_1}$ is invertible as
well. Let $\vect{\lambda}\in\reals^{\lvert B_1\rvert}$ be a vector satisfying
$ A_{B_1}\vect{\lambda} = \vect{0}$. As in the proof of Lemma~\ref{lem:good_augmented_trees}, we
show that $\lambda_{ij} = 0$ for all $(i,j) \in B_1 \setminus B_\rho$. From Theorem~\ref{thm:Ejov}
and condition~\eqref{condition_3}, the columns of $A_{B_1}$ corresponding to arcs in $B_\rho$ are
linearly independent; hence $\lambda_{ij} = 0$ for for all $(i,j) \in B_\rho$. Consequently, we have
$\vect{\lambda} = \vect{0} $, implying that the columns of $A_{B_1}$ are linearly independent. So,
condition \textbf{B1} holds. Finally, Theorem~\ref{thm:Ejov} and condition~\eqref{condition_3}
confirm that condition \textbf{B2} holds as well.
\end{proof}
\begin{figure}
  \centering
  \begin{subfigure}{.3\textwidth}
    \centering
    \begin{tikzpicture}[xscale=1.5,every node/.style={draw,shape=circle,outer sep=1pt,inner
        sep=1pt,minimum size=.15cm}]
		\node (1) at (0,0) {1};
		\node (2) at (1,0) {2};
		\node (3) at (1.5,-1) {3};
		\node (4) at (2,-2) {4};
		\node (5) at (1,-3) {5};
		\node (6) at (0,-2.5) {6};
		\node (7) at (-.5,-1) {7};

		\draw[thick,->] (1) -- (2);
		\draw[thick,->] (2) -- (3);
		\draw[thick,->] (3) -- (1);
		\draw[thick,bend right=40,->] (3) to (2);
		\draw[thick,->] (1) -- (7);
		\draw[thick,->] (1) -- (6);
		\draw[thick,->] (5) to (4);
		\draw[thick,->] (3) -- (4);
              \end{tikzpicture}
              \caption{}
            \end{subfigure}
            \hspace*{\fill}
            \begin{subfigure}{.3\textwidth}
		\centering
		\begin{tikzpicture}[xscale=1.5,every node/.style={draw,shape=circle,outer sep=1pt,inner sep=1pt,minimum size=.15cm}]
		\node (1) at (0,0) {1};
		\node (2) at (1,0) {2};
		\node (3) at (1.5,-1) {3};
		\node (4) at (2,-2) {4};
		\node (5) at (1,-3) {5};
		\node (6) at (0,-2.5) {6};
		\node (7) at (-.5,-1) {7};

		\draw[thick,->] (1) -- (2);
		\draw[thick,->] (2) -- (3);
		\draw[thick,->] (3) -- (1);
		\draw[thick,bend right=40,->] (3) to (2);
		\draw[thick,->] (4) -- (5);
		\draw[thick,->] (5) -- (6);
		\draw[thick,->] (7) -- (6);
		\draw[thick,->] (7) -- (4);
		\end{tikzpicture}
		\caption{}
	\end{subfigure}
	\hspace*{\fill}
	\begin{subfigure}{.3\textwidth}
		\centering
		\begin{tikzpicture}[xscale=1.5,every node/.style={draw,shape=circle,outer sep=1pt,inner sep=1pt,minimum size=.15cm}]
		\node (1) at (0,0) {1};
		\node (2) at (1,0) {2};
		\node (3) at (1.5,-1) {3};
		\node (4) at (2,-2) {4};
		\node (5) at (1,-3) {5};
		\node (6) at (0,-2.5) {6};
		\node (7) at (-.5,-1) {7};

		\draw[thick,->] (1) -- (2);
		\draw[thick,->] (2) -- (3);
		\draw[thick,->] (3) -- (1);
		\draw[thick,bend right=40,->] (3) to (2);
		\draw[thick,->] (4) -- (5);
		\draw[thick,bend left=30,->] (5) to (4);
		\draw[thick,bend left=30,->] (6) to (7);
		\draw[thick,->] (7) -- (6);
		\end{tikzpicture}
		\caption{}
	\end{subfigure}
	\caption{Three non-Hamiltonian bases of Type 4 corresponding to the support in Figure~\ref{fig:nonHam_EP}(\subref{fig:nonHamilton_Ext_2})}\label{fig:nonHamilton_basis_3}
      \end{figure}
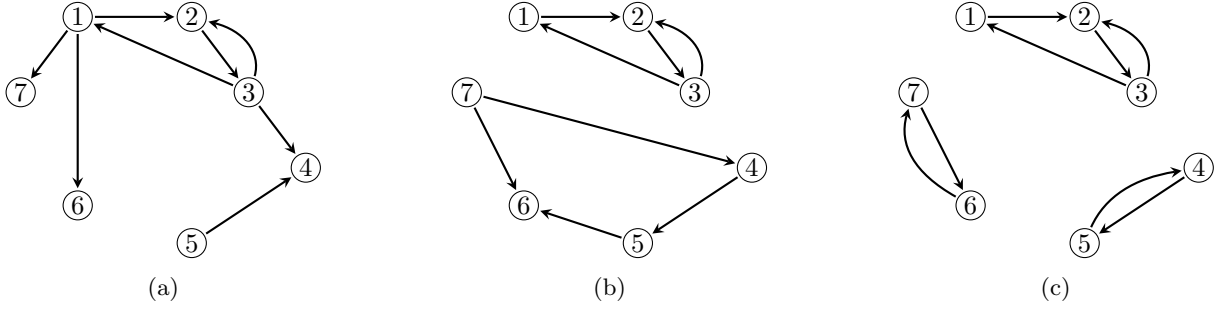

\begin{remark}\label{remark:feasible-bases}
  Theorem~\ref{thm:characterising_nonHamiltonian_bases} implies that the arc set of a spanning
  subgraph $H$ of $G$ is a non-Hamiltonian basis if and only if (i) $H$ has exactly
  $n+ 1$ arcs, (ii) every node in $H$ has positive degree, (iii) $H$ does not contain
  a balanced oriented cycle, and (iv) $H$ contains a short cycle and a noose path.
\end{remark}
\begin{remark}
	Theorem~\ref{thm:characterising_nonHamiltonian_bases} describes the structure of all non-Hamiltonian bases of types 1--4. However, as the supports of non-degenerate non-Hamiltonian extreme points of types 1--3 have exactly $n+1$ arcs, for their corresponding feasible bases, we have $H=H_1$ and the result of Theorem~\ref{thm:characterising_nonHamiltonian_bases} will be consistent with Theorem~\ref{thm:Ejov}. So, the main contribution of Theorem~\ref{thm:characterising_nonHamiltonian_bases} can be regarded for the structure of degenerate non-Hamiltonian bases of Type 4, which is utilized in Section~\ref{section:prevelance} to derive their expected prevalence in the polytope $\hBeta(G)$ when the input graph $G$ is random.
\end{remark}

As Figure~\ref{fig:nonHam_EP}(\subref{fig:nonHamilton_basis_1}) contains
a balanced oriented cycle $\overrightarrow{C}=\{(4,5),(6,5),(7,6),(7,4)\}$, it is now readily seen that it is not associated with a
feasible basis of the polytope $\hBeta(K_7)$. However, as
Figure~\ref{fig:nonHam_EP}(\subref{fig:nonHamilton_basis_2}) satisfies all conditions provided in
Theorem~\ref{thm:characterising_nonHamiltonian_bases}, it corresponds to a non-Hamiltonian basis of
Type 4. Figure~\ref{fig:nonHamilton_basis_3} shows three more non-Hamiltonian bases corresponding to
the support displayed in Figure~\ref{fig:nonHam_EP}(\subref{fig:nonHamilton_Ext_2}).


\section{Expected numbers of feasible bases of $\hBeta$-polytope for random graphs.}\label{section:prevelance}
As discussed in Section~\ref{section:formulation}, the correspondence between the sets of
Hamiltonian cycles in the graph $G$ and feasible bases of the polytope $\hBeta(G)$ can be exploited
to develop an algorithm that searches for Hamiltonian cycles. One key issue influencing the efficiency
of such an algorithm is the existence of a sufficiently large number of Hamiltonian
bases. More precisely, we require the ratio of the number of Hamiltonian bases to
non-Hamiltonian bases to be bounded below by $1/h(n)$, where $h(n)$ is a polynomial in
$n$. Then for a Hamiltonian input graph $G$ the probability of not finding a Hamiltonian
basis in the first $\alpha h(n)$ iterations would be expected to decay rapidly with $\alpha$ if the search algorithm
sampled feasible bases uniformly at random. For instance, if Algorithm~\ref{alg:HC_search} were
modified to search on feasible bases (instead of extreme points), according to a uniform
distribution, then we could expect to find a Hamiltonian cycle, with very high probability, in
polynomial time. Thus it is important to investigate the prevalence of Hamiltonian bases in
$\hBeta (G)$.

In this section, we exploit the structural results from Section~\ref{section:geometric_proerties} to
derive the expected prevalence of each of the five types of feasible bases of the polytope $\hBeta(G)$ where $G$ is a binomial random
graph and show that the majority of these are non-Hamiltonian bases of Type $4$. Thus, the
expected number of Hamiltonian bases of this polytope is exponentially small, when compared
to the expected number of non-Hamiltonian bases of Type $4$. Fortunately, in
\cite{eshragh2011hamiltonian}, it was shown that these undesirable feasible bases can be eliminated
by the addition of $2n-2$ lower and upper bound constraints.

The following definition is needed for the ensuing discussion. For an introduction to the theory
of general random graphs, we refer the interested reader to \citet{bollobas1998random}.

\begin{definition}	
  For a positive integer $n$ and a fixed probability $p\in [0,1]$, the \emph{directed binomial
    random graph} on $n$ nodes, denoted by $G_{n,p}$, is a random directed graph on $n$ nodes where
  each ordered pair of distinct nodes is, independently, connected by an arc with probability $p$.
\end{definition}

As discussed in Section~\ref{section:geometric_proerties}, the feasible bases of the polytope
$\hBeta(G)$ can be identified by certain subgraphs of $G$ on $n$ nodes and $n+1$ arcs. For
$k=0,1,2,3,4$, let $f_k(n)$ denote the number of subgraphs of the complete graph $K_n$ corresponding
to the feasible bases of Type $k$ of the polytope $\hBeta(K_n)$. Recall that the feasible bases
of Type $0$ are referred as Hamiltonian bases.
Label all $f_k(n)$ feasible bases of Type $k$ of the polytope $\hBeta(K_n)$,
$1,\dots,f_k(n)$. Define the binary random variable $I_{i}(k)$ that takes the value of one, if the
$i^{th}$ feasible basis of Type $k$ of the polytope $\hBeta(K_n)$ appears in the random polytope
$\hBeta(G_{n,p})$, and otherwise zero. Clearly,
\[\expect {I_i(k)}= p^{n+1},\]
where $\textbf{E}$ denotes the expected value. Although $I_i(k)$ are dependent random variables, as the expectation is a linear operator, we have  
\begin{align}\label{EquExp}
 \expect{\sum_{i=1}^{f_k(n)} I_i(k)}  = \sum_{i=1}^{f_k(n)}\expect {I_i(k)}=f_k(n) p^{n+1}.
\end{align}
Equation~\eqref{EquExp} implies that the expected number of feasible bases of Type $k$ of the polytope $\hBeta(G_{n,p})$ equals $f_k(n) p^{n+1}$ for $k=0,\dots,4$. In particular, the
ratio between the expected number of Hamiltonian bases and the expected total number of
feasible bases in the random polytope $\hBeta(G_{n,p})$ is independent of $p$ and equals
$f_0(n)/[f_0(n)+\dotsb+f_4(n)]$. Theorem~\ref{thm:Expected_Num_Ext} provides the expected number of
feasible bases of types 0--3 as well as a lower bound on the expected number of feasible bases of
Type 4 by determining the value of $f_k(n)$ for $k=0,\dots,3$ and finding a lower bound on $f_4(n)$.
\begin{theorem}\label{thm:Expected_Num_Ext}
  Consider the binomial random graph $G_{n,p}$ and the corresponding polytope $\hBeta(G_{n,p})$. The expected number of 
  \begin{enumerate}[(i)]
  \item\label{item:expectation_type_0}Hamiltonian bases is  $\displaystyle(n-2)n!p^{n+1}$;\medskip
  \item\label{item:expectation_type_1}non-Hamiltonian bases of Type 1 is $\displaystyle\frac12(n-3)n!p^{n+1}$;\medskip
  \item\label{item:expectation_type_2}non-Hamiltonian bases of Type 2 is  $\displaystyle\frac16(n-4)(n-3)(n+1)(n-1)!p^{n+1}$;\medskip
  \item\label{item:expectation_type_3}non-Hamiltonian bases of Type 3 is $\displaystyle\frac16(n-2)(n-1)n!p^{n+1}$;\medskip
  \item\label{item:expectation_type_4}non-Hamiltonian bases of Type
    $4$ is at least
    $\displaystyle(n-1)(n-2)(n-3)^{n-5}2^{n-4}p^{n+1}$.
  \end{enumerate}
\end{theorem}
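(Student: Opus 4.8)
By \eqref{EquExp} the expected number of Type-$k$ feasible bases equals $f_k(n)\,p^{n+1}$, where $f_k(n)$ is the number of subgraphs of $K_n$ that are feasible bases of Type $k$; hence it suffices to evaluate $f_k(n)$ exactly for $k=0,1,2,3$ and to bound $f_4(n)$ from below. For $k=0$ the plan is to use Proposition~\ref{prop:Hamilton_basis}: a Hamiltonian basis is a Hamiltonian cycle together with one extra arc, and a subgraph with $n+1$ arcs contains at most one Hamiltonian cycle (two distinct ones would share at least $n-1$ arcs, hence a Hamiltonian path, which completes uniquely), so this correspondence is a bijection and $f_0(n)=(n-1)!\bigl(n(n-1)-n\bigr)=(n-2)\,n!$.

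For $k=1,2,3$ the corresponding extreme points are non-degenerate, so each feasible basis coincides with its support, which by Theorem~\ref{thm:Ejov} is a union of a short cycle through node $1$ and a noose path subject to the incidence restriction defining the type. I would count such labelled subgraphs directly: distribute the $n-1$ nodes other than $1$ among the relevant parts (non-$1$ nodes of the short cycle, interior nodes of the connecting path, the ``new'' nodes of the noose path and the nodes of the noose cycle), impose cyclic orders on the cycles and a linear order on the path interior, choose the splitting node and the node at which the noose closes, and sum over the part sizes. In every case the multinomial coefficient cancels the ordering factors, so $f_k(n)$ reduces to $(n-1)!$ times a short polynomial sum (essentially $\sum_s s$, $\sum_s s(n-s-2)$, $\tfrac12\sum_s s(s-1)$ for $k=1,2,3$ over the admissible ranges of $s$), and evaluating the sums yields the stated $\tfrac12(n-3)n!$, $\tfrac16(n-4)(n-3)(n+1)(n-1)!$, and $\tfrac16(n-2)(n-1)n!$.

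For $k=4$ the plan is to exhibit an explicit, easily counted sub-family of Type-$4$ feasible bases. Fix an ordered pair $(a,b)$ of distinct nodes other than $1$ and let the ``core'' be the $3$-cycle $1\to a\to b\to 1$ together with the backward arc $(b,a)$; its $2$-core is the core itself, which is a short cycle plus the noose path $1\to a\to b\to a$. Attach a directed tree on the remaining $n-3$ nodes to the core by a single fixed arc (say, from $a$ to the lowest-labelled remaining node) and orient the $n-4$ tree edges arbitrarily. The resulting spanning subgraph has $n+1$ arcs, every node has positive degree, and its only cycles lie in the core and are non-balanced, so by Theorem~\ref{thm:characterising_nonHamiltonian_bases} (equivalently Remark~\ref{remark:feasible-bases}) it is a non-Hamiltonian basis; since its support is the $3$-node core it is degenerate, i.e.\ of Type $4$. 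Distinct tuples give distinct subgraphs, because the core (hence $(a,b)$) and the oriented tree are recovered from the $2$-core and its complement, so the family has size $(n-1)(n-2)\,(n-3)^{n-5}\,2^{n-4}$, as claimed.

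The most delicate step is the enumeration for $k=1,2,3$: one must pin down the exact ranges of the size parameters and decide, type by type, which boundary configurations are admissible (whether node $1$ may itself be the splitting node—which differs across the three types—whether short cycles of length $2$ occur, whether connecting paths of length $1$ or $2$ are allowed, and so on), and then verify that the parameterization is a genuine bijection onto the Type-$k$ feasible bases so that nothing is over- or under-counted; matching the closed forms is then routine summation. For $k=4$ the only real idea is that hanging freely oriented pendant trees off a tiny fixed core is what produces the $2^{\Theta(n)}$ factor while keeping the count transparent.
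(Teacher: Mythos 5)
Your proposal is correct and follows essentially the same route as the paper: the identical Hamiltonian-cycle-plus-one-arc count for part (i), the same short-cycle/noose-cycle/connecting-path decomposition reducing Types 1--3 to the sums $\sum_k k$, $\sum_k k(n-k-2)$ and $\tfrac12\sum_k k(k-1)$ times $(n-1)!$, and the same Cayley-formula construction for the Type-4 lower bound. The only (immaterial) difference is in part (v), where you attach the oriented tree to the three-node core by a fixed connecting arc, whereas the paper keeps the tree as a separate component and spends the extra arc inside it to create a non-balanced cycle; both families have size $(n-1)(n-2)(n-3)^{n-5}2^{n-4}$.
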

\begin{proof}\hfill
  \begin{enumerate}[(i)]
  \item The complete graph $K_n$ possesses $n(n-1)$ arcs and $(n-1)!$ Hamiltonian cycles. By Proposition~\ref{prop:Hamilton_basis} and Corollary~\ref{lem:adj_HExt}, for each Hamiltonian cycle, any of the $n(n-2)$ remaining arcs can be added to form a feasible basis. It follows that $f_0(n)=(n-2)n!$.
  \item A non-Hamiltonian basis of Type 1 consists of a short cycle of length $k \in \{ 2,3,\dotsc,n-2 \} $,
    a noose cycle of length $n-k$, and an arc joining them. For a  fixed $k$, there are 
    \begin{itemize}
    	\item $\binom{n-1}{k-1}(k-1)!$ ways to construct a short cycle;
    	
    	\item $(n-k-1)!$ ways to construct a noose cycle from the remaining nodes, after fixing the short cycle; 
    	
    	\item $\binom{k}{1}\binom{n-k}{1}$ options to choose a node from the short cycle and the noose cycle to connect them together. 
    \end{itemize}
    Consequently,
    \[ f_1(n)=\sum_{k=2}^{n-2}\frac{(n-1)!}{(n-k)!}(n-k-1)!k(n-k) =(n-1)!\sum_{k=2}^{n-2}k=\frac12(n-3)n!.\]
  \item A non-Hamiltonian basis of Type 2 consists of a short cycle of length $k\in\{2,3,\dotsc,n-3\}$, a
    noose cycle of length $l\in\{2,3,\dotsc,n-k-1\}$, and a path of length $n-(k+l)+1$ joining
    them. For fixed $k$ and $l$, there are 
    \begin{itemize}
    	\item $\binom{n-1}{k-1}(k-1)!$ ways to construct a short cycle;
    	
    	\item $\binom{n-k}{l}(l-1)!$ ways to construct a noose cycle, after fixing the short cycle; 
    	
    	\item $(n-k-l)!$ ways to permute the remaining nodes to construct the connecting path from the short cycle to the noose cycle; 
    	
    	\item $\binom{k}{1}\binom{l}{1}$ options to choose a node from the short cycle and a node
          from the noose cycle to connect them through the path constructed in (iii). 
    \end{itemize}
    Consequently,
    \begin{multline*}
      f_2(n) =
      \sum_{k=2}^{n-3}\sum_{l=2}^{n-k-1}\frac{(n-1)!}{(n-k)!}.\frac{(n-k)!}{l(n-k-l)!}(n-k-l)!kl
      =\sum_{k=2}^{n-3}\sum_{l=2}^{n-k-1}k(n-1)!=(n-1)!\sum_{k=2}^{n-3}(n-k-2)k\\= (n-1)!\left((n-2)\sum_{k=2}^{n-3}k - \sum_{k=2}^{n-3}k^2\right)
      = \frac 16(n-4)(n-3)(n+1)(n-1)!.
   \end{multline*}    
 \item For a non-Hamiltonian basis of Type 3 with a short cycle of length $k\in\{2,3,\dotsc,n-1\}$, there
   are
   \begin{itemize}
   \item $\binom{n-1}{k-1}(k-1)!$ ways to construct a short cycle;
   
   \item $(n-k)!$ ways to permute the remaining nodes, after fixing the short cycle; 
   
   \item $\binom{k-1}{1}$ options to choose a splitting node on the short cycle; 
   
   \item $\binom{l}{1}$ options to choose a node with in-degree 2 on  the short cycle, where $l$ is the location of the splitting node on the short cycle starting from node 1.
   \end{itemize}
   Consequently,
   \[f_3(n)= \sum_{k=2}^{n-1}\sum_{l=1}^{k-1}\frac{(n-1)!}{(n-k)!}(n-k)!l=(n-1)!\sum_{k=2}^{n-1} \frac{k(k-1)}{2}=\frac16(n-2)(n-1)n!.
   \] 
 \item In order to establish a lower bound for $f_4(n)$, following
   Remark~\ref{rem:support_of_type4}, we bound the number of feasible bases of Type 3 corresponding
   to non-Hamiltonian extreme point $\vect x$ such that their supports have only three nodes
   including one short cycle and one noose cycle, each of length 2. If $\vect x$ is
   non-Hamiltonian extreme point of the polytope $\hBeta(K_n)$ with this structure, there are
   $\binom{n-1}{1}\binom{n-2}{1}=(n-1)(n-2)$ ways to construct the support
   $\supportgraph(K_n,\vect{x})$. Any undirected tree spanning the remaining $n-3$ nodes can be
   oriented in $2^{n-4}$ ways, and for each of these orientations there is always an additional arc that can be added without creating a balanced oriented cycle. As a consequence of
   Theorem~\ref{thm:characterising_nonHamiltonian_bases} and Remark~\ref{remark:feasible-bases}, the
   result is a feasible basis of Type 4 corresponding to $\vect x$. By Cayley's formula (see, for
   instance, \cite[Chapter~30]{aigner2010proofs}), the number of spanning trees on $n-3$ labeled
   nodes is equal to $(n-3)^{n-5}$. Thus, we have
   \[ f_4(n)\geq(n-1)(n-2)(n-3)^{n-5}2^{n-4}.\qedhere\]
 \end{enumerate}
\end{proof}
\begin{corollary}\label{cor:ratio_nHExt4_totalExt}
	In the polytope $\hBeta(G_{n,p})$, for sufficiently large $n$, we have
	\begin{enumerate}[(i)]
		\item \[\frac{\expect{\text{Number of feasible bases of Type 4}}}{\expect{\text{Total number of feasible
				bases}}}\geq 1-\frac{n^{11/2}}{e^{n}2^{n-9}};\]
\item
	\[\frac{\expect{\text{Number of Hamiltonian bases}}}{\expect{\text{Total number of feasible
				bases}}}\leq\frac{n^{9/2}}{e^{n-1}2^{n-9}}.\]
	\end{enumerate}
\end{corollary}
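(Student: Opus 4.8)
The plan is to observe first that the factor $p^{n+1}$, which by the identity $\expect{\sum_{i} I_i(k)} = f_k(n)\,p^{n+1}$ multiplies every expected count, cancels in each of the ratios in the statement; so, writing $S = \sum_{k=0}^{4} f_k(n)$, both inequalities reduce to purely combinatorial estimates. Since all $f_k(n) \ge 0$ we have $S \ge f_4(n)$, hence
\[
1 - \frac{f_4(n)}{S} = \frac{f_0(n)+f_1(n)+f_2(n)+f_3(n)}{S} \le \frac{f_0(n)+f_1(n)+f_2(n)+f_3(n)}{f_4(n)}, \qquad \frac{f_0(n)}{S} \le \frac{f_0(n)}{f_4(n)}.
\]
Thus part~(i) follows from an upper bound on the first right-hand side and part~(ii) from one on the second, and in both I would replace $f_4(n)$ in the denominator by the lower bound of Theorem~\ref{thm:Expected_Num_Ext}(v).

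Next I would estimate the pieces. From the closed forms in Theorem~\ref{thm:Expected_Num_Ext}(i)--(iv), each of $f_0(n),\dots,f_3(n)$ is at most a quadratic in $n$ times $n!$; adding the four expressions shows $f_0(n)+f_1(n)+f_2(n)+f_3(n) \le \tfrac12 n^2\, n!$ (in fact the sum is $\sim \tfrac13 n^2\, n!$) for $n$ sufficiently large, while $f_0(n) = (n-2)\,n! \le n\cdot n!$. For the denominator I would combine Theorem~\ref{thm:Expected_Num_Ext}(v) with two elementary facts holding for all large $n$: Stirling's bound $n! \le e\sqrt{n}\,(n/e)^n$, and the inequality $(n-3)^{n-j} \ge e^{-3}\, n^{n-j}$ for any fixed $j$, which follows from $(n-j)\ln(1-3/n) = -3 + O(1/n) > -3$. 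Together with $(n-1)(n-2) \ge (n-3)^2$ and $n-1 \ge n-3$, these yield $f_4(n) \ge (n-3)^{n-3}\,2^{n-4}$ and $(n-1)(n-3)^{n-5} \ge (n-3)^{n-4}$.

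Substituting, the computation for part~(i) would run
\[
\frac{f_0(n)+f_1(n)+f_2(n)+f_3(n)}{f_4(n)} \le \frac{\tfrac12 n^2\, n!}{(n-3)^{n-3}\,2^{n-4}} \le \frac{e^4\, n^{11/2}}{2\, e^{n}\, 2^{n-4}} = \frac{8e^4}{2^9}\cdot\frac{n^{11/2}}{e^{n}\,2^{n-9}},
\]
and since $8e^4 < 2^9 = 512$ the last quantity is at most $n^{11/2}/(e^{n}2^{n-9})$ for $n$ large, which is~(i). Likewise, using $f_0(n)/f_4(n) \le n!/\bigl((n-3)^{n-4}\,2^{n-4}\bigr)$,
\[
\frac{f_0(n)}{f_4(n)} \le \frac{e^4\, n^{9/2}}{e^{n}\, 2^{n-4}} = \frac{16e^4}{2^9 e}\cdot\frac{n^{9/2}}{e^{n-1}\,2^{n-9}},
\]
and $16e^4 < 2^9 e \approx 1392$ gives~(ii).

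The main obstacle I anticipate is simply the bookkeeping of constants: the inequalities $8e^4 < 2^9$ and $16e^4 < 2^9 e$ hold comfortably, but only if one uses a sufficiently sharp form of Stirling's bound and is not wasteful either in summing $f_0(n)+\dots+f_3(n)$ or in the estimate $(n-3)^{n-j}\ge e^{-3}n^{n-j}$. Beyond that the argument is routine, and no probabilistic input is needed past the cancellation of the $p^{n+1}$ factors already recorded before Theorem~\ref{thm:Expected_Num_Ext}.
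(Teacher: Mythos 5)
Your argument is correct and follows essentially the same route as the paper: cancel the $p^{n+1}$ factors, bound each ratio by dropping $f_0(n)+\dots+f_3(n)$ from the denominator, upper-bound $f_0(n)+\dots+f_3(n)$ by a polynomial times $n!$ via Stirling, and lower-bound $f_4(n)$ from Theorem~\ref{thm:Expected_Num_Ext}(v); the paper merely packages the same estimates as $f_0(n)+\dots+f_3(n)\le n^{n+5/2}e^{-n}$ and $f_4(n)\ge n^{n-3}2^{n-9}$ instead of tracking explicit constants. One small caution: your auxiliary claim $(n-3)^{n-j}\ge e^{-3}n^{n-j}$ ``for any fixed $j$'' is false for $j\in\{0,1\}$, but it does hold for large $n$ at the values $j=3$ and $j=4$ that you actually use, so the proof stands.
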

\begin{proof}\hfill
  \begin{enumerate}[(i)]
  	\item The quotient on the left-hand side is
  \begin{multline*}
    \frac{f_4(n)}{f_0(n)+f_1(n)+f_2(n)+f_3(n)+f_4(n)}=1-\frac{f_0(n)+f_1(n)+f_2(n)+f_3(n)}{f_0(n)+f_1(n)+f_2(n)+f_3(n)+f_4(n)}\\
                                                     \geq 1-\frac{f_0(n)+f_1(n)+f_2(n)+f_3(n)}{f_4(n)}.
  \end{multline*}
  Using Stirling's formula to bound the factorials in parts~(\ref{item:expectation_type_0})--(\ref{item:expectation_type_3}) of Theorem~\ref{thm:Expected_Num_Ext}, 
  \[f_0(n)+f_1(n)+f_2(n)+f_3(n)= \frac1 3 (n^3-7n+6)(n-1)! \leq\frac 1 3 n^2n!\leq n^{n+5/2}e^{-n}.\]
  Furthermore, for sufficiently large $n$, we have
  \begin{multline}
    f_4(n)\geq(n-1)(n-2)(n-3)^{n-5}2^{n-4}=n^{n-3}\left(\frac{n-1}{n}\right)\left(\frac{n-2}{n}\right)\left(\frac{n-3}{n}\right)^{n-5}2^{n-4}\\
    =n^{n-3}2^{n-4}\left(1-\frac1n\right)\left(1-\frac2n\right)\left(\left(1-\frac3n\right)^{n/3}\right)^{3(n-5)/n}\\\geq n^{n-3}2^{n-4}\left(1-\frac2n\right)^2 \left(\frac1e\right)^3
    \geq n^{n-3}2^{n-4}\left(\frac{27}{32}\right)\left(\frac13\right)^{3}=
    n^{n-3}2^{n-9}.\label{eq:f4_lower_bound}
  \end{multline}	
  Thus, we obtain
  \[
  \frac{f_4(n)}{f_0(n)+f_1(n)+f_2(n)+f_3(n)+f_4(n)}
  \geq 1-\frac{n^{n+5/2}e^{-n}}{n^{n-3} 2^{n-9}} = 1-\frac{n^{11/2}}{e^{n} 2^{n-9}}.
  \]
\item From part (i) of Theorem~\ref{thm:Expected_Num_Ext} and Inequality~\eqref{eq:f4_lower_bound}
  as well as utilizing Stirling's formula, the left-hand side is bounded above as follows:
 \[ \frac{f_0(n)}{f_0(n)+f_1(n)+f_2(n)+f_3(n)+f_4(n)}  \leq \frac{f_0(n)}{f_4(n)} \leq \frac{(n-2)n!}{n^{n-3}2^{n-9}}\leq \frac{n^{9/2}}{e^{n-1} 2^{n-9}}. \qedhere\] 
\end{enumerate}
\end{proof}
\begin{remark}\label{rem:expected_num_type4}
  Corollary~\ref{cor:ratio_nHExt4_totalExt} shows that even the expected number of feasible
  bases of Type 4 corresponding to a portion of supports on three nodes is exponentially larger than
  the expected number of feasible bases of types 0, 1, 2 and 3 all together. Thus, non-Hamiltonian bases of Type 4 can make up the majority of feasible bases of the polytope $\hBeta(G)$. This result is supported numerically in Section~\ref{section:numerical_result}.
\end{remark}


\section{Reducing the feasible region.}\label{section:reducing_region}
Theorem~\ref{thm:Expected_Num_Ext} and Corollary~\ref{cor:ratio_nHExt4_totalExt} show that the main
obstacle to finding a Hamiltonian basis of the polytope $\hBeta(G)$ may lie in the
abundance of non-Hamiltonian bases of Type $4$. As indicated by
Corollary~\ref{cor:ratio_nHExt4_totalExt}\,, as the number of nodes of the graph $G$, $n$,
increases, the ratio between the numbers of non-Hamiltonian bases of Type $4$ and all
feasible bases of the polytope $\hBeta(G)$ could tend to one exponentially fast. It is therefore of interest to modify the polytope $\hBeta(G)$ in
such a way that all non-Hamiltonian bases of Type $4$ are eliminated, while all Hamiltonian
feasible bases are preserved. To this end, \citet{eshragh2011hamiltonian} added $2(n-1)$ of the
so-called wedge constraints \eqref{wedge_constraints}, to $\hBeta(G)$, thereby defining a modified
polytope $\whBeta(G)$ with the following constraints set:
\begin{align}
\vect{x} \in \hBeta(G),\nonumber\\
 \beta^{n-1} \leq \sum_{j \in \outi} x_{ij} &\leq \beta  && \mbox{for all } i \in \nodeset \setminus \{1\}.  \label{wedge_constraints}
\end{align}
In \cite{eshragh2011hamiltonian} it was shown that, when $\beta$ is sufficiently close to one, the
wedge constraints~\eqref{wedge_constraints} excise what is, typically, the majority of
non-Hamiltonian bases. This result is summarized in Theorem~\ref{thm:cutting_off_nonH_Ext}.

\begin{theorem}[\citet{eshragh2011hamiltonian}]\label{thm:cutting_off_nonH_Ext}
  Consider the graph $G=(\nodeset,\arcset)$ on n nodes and the corresponding polytopes $\hBeta(G)$ and $\whBeta(G)$. For values of $\beta\in\left((1-\frac{1}{n-2})^{\frac{1}{n-2}},1\right)$, the only possible common extreme points of polytopes $\hBeta(G)$ and $\whBeta(G)$ are the Hamiltonian extreme points and the non-Hamiltonian extreme points of Type 1.
\end{theorem}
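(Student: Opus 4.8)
The plan is to establish the two halves of the statement separately: (i) every Hamiltonian extreme point and every Type~1 non-Hamiltonian extreme point of $\hBeta(G)$ already satisfies the wedge constraints~\eqref{wedge_constraints} for \emph{all} $\beta\in(0,1)$, hence survives as an extreme point of $\whBeta(G)$; and (ii) for $\beta$ in the stated range, every non-Hamiltonian extreme point of Type~2, 3 or 4 violates at least one wedge constraint, hence is not even a point of $\whBeta(G)$ and so cannot be a common extreme point. Throughout I would use the elementary observation that, for an extreme point $\vect x$ of $\hBeta(G)$, the out-flow $s_i:=\sum_{j\in\outi}x_{ij}$ is determined by the support $\supportgraph(G,\vect x)$ through flow conservation: constraint~\eqref{eq:flow_injection} gives $s_1=1$, and for $i\neq 1$ constraint~\eqref{eq:flow_conservation_i} gives $s_i=\beta\cdot(\text{in-flow at }i)$.

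For (i): propagating the flow around a Hamiltonian support $1\to v_1\to\dotsb\to v_{n-1}\to 1$ gives $s_{v_\ell}=\beta^{\ell}$, so $\{s_i:i\neq 1\}=\{\beta,\dotsc,\beta^{n-1}\}\subseteq[\beta^{n-1},\beta]$; this is the computation already recorded in \cite[Lemma~3.2]{eshragh2011hybrid} and used in the proof of Proposition~\ref{prop:Hamilton_basis}. For a Type~1 support (a short cycle through node~$1$ on $r$ nodes with splitting node at position~$\sigma$, a node-disjoint noose cycle on the remaining $n-r$ nodes, and one arc from the splitting node onto the noose cycle), flow conservation at node~$1$ forces the diverted flow to be $w=\beta^{\sigma}(1-\beta^{\,n-r})$, and since the noose cycle has exactly $n-r$ nodes the geometric series around it collapses and one again gets $\{s_i:i\neq 1\}=\{\beta,\dotsc,\beta^{n-1}\}$. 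In both cases $\beta^{n-1}\le s_i\le\beta$ for all $i\neq 1$.

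For (ii), Type~4 is immediate: by definition the support of a Type~4 extreme point omits at least one node $i$, so $s_i=0<\beta^{n-1}$ and the lower wedge constraint fails for every $\beta\in(0,1)$. The content lies in Types~2 and~3, where the noose cycle on $Q$ nodes is fed either through a path of length $\ge 2$ (Type~2) or through a segment shared with the short cycle (Type~3). In both situations the (unique) noose-cycle node $t$ of in-degree two carries an out-flow of the form $s_t=\beta^{a}\frac{1-\beta^{b}}{1-\beta^{c}}$ with $b>c$, so the fraction exceeds~$1$ --- precisely the amplification that the coincidence ``noose cycle has $n-r$ nodes'' removes in the Type~1 case --- while the noose-cycle node farthest from $t$ carries a correspondingly small out-flow. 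Running the same flow-conservation bookkeeping as in (i), but now with the geometric series \emph{not} collapsing, produces explicit exponents in terms of the support's shape parameters, and a short estimate shows that for at least one of these two nodes the relevant wedge inequality (the upper bound $s_t\le\beta$ at $t$, or the lower bound $\ge\beta^{n-1}$ at the far node) fails exactly when $1-\beta^{\,n-2}<\tfrac1{n-2}$, that is, when $\beta>\bigl(1-\tfrac1{n-2}\bigr)^{1/(n-2)}$.

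The hard part is this final estimate. One must run the case analysis over \emph{all} admissible Type~2 and Type~3 support shapes --- parametrised by the short-cycle length, the position of the splitting node, the length of the connecting path or of the shared segment, and the noose-cycle length --- keep track of which of the two wedge inequalities is binding in each family, and verify that the single threshold $\bigl(1-\tfrac1{n-2}\bigr)^{1/(n-2)}$ dominates the worst case uniformly in all these parameters. Bounding the parameters by their extreme feasible values and reducing the resulting inequalities to the comparison of $1-\beta^{n-2}$ with $\tfrac1{n-2}$ is where the effort goes; the amplification computations that feed it are routine.
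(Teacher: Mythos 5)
First, a point of reference: the paper does not prove this theorem at all --- it is imported verbatim from \citet{eshragh2011hamiltonian} --- so there is no in-paper proof to compare against, and your proposal has to stand on its own. Its architecture is the right one: a common extreme point of $\hBeta(G)$ and $\whBeta(G)$ must satisfy the wedge constraints, so it suffices to show that every Type~2, 3 or 4 extreme point violates one of them in the stated $\beta$-range (your part~(i), showing Hamiltonian and Type~1 points survive, is correct --- I checked that the out-flows in both cases are exactly $\{1,\beta,\dotsc,\beta^{n-1}\}$ --- but it is not needed for the statement, which only asserts an upper bound on the set of common extreme points). Your Type~4 argument is complete and correct: a degree-zero node has out-flow $0<\beta^{n-1}$.

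The genuine gap is that the entire quantitative content of the theorem --- why the specific threshold $\bigl(1-\tfrac{1}{n-2}\bigr)^{1/(n-2)}$ works uniformly over all Type~2 and Type~3 supports --- is announced but not carried out; you explicitly defer ``the hard part,'' and without it nothing has been proved about Types~2 and~3. Two concrete remarks on what that missing step involves. For Type~2 the cleanest violated constraint is not at the noose-cycle entry node but at the first node of the connecting path, whose out-flow is $\beta^{\sigma+1}(1-\beta^{n-k})$; the lower wedge bound fails iff $1-\beta^{n-k}<\beta^{n-2-\sigma}$, and the crude bound $1-\beta^{n-2}<\beta^{n-2}$, i.e.\ $\beta^{n-2}>\tfrac12$, already suffices and is implied by $\beta^{n-2}>1-\tfrac1{n-2}$. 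For Type~3, however, your described mechanism is off: at the relevant in-degree-two node the amplification factor comes out as $\frac{1-\beta^{n-k+1}}{1-\beta^{Q}}$ with $Q\ge n-k+1$, so the fraction does \emph{not} exceed $1$ in general; the violation instead comes from the fact that the shared node $j_q$ has out-flow $\beta^{q-1}+\beta z$ (a \emph{sum} of the short-cycle flow and the returning noose-cycle flow), which exceeds the upper bound $\beta$ once $z>1-\beta^{q-2}$, or from the far noose-cycle node whose out-flow tends to $\frac{n-k+1}{Q}<1$ while $\beta^{n-1}\to 1$. Pinning down which inequality is binding in each parameter regime and verifying the single threshold dominates all cases is exactly the part that is absent, so as written the proposal is a plausible plan rather than a proof.
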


Our results in Section~\ref{section:prevelance} show that, for a given binomial random
graph, the majority of non-Hamiltonian bases are of Type $4$. Since the result of Theorem~\ref{thm:cutting_off_nonH_Ext} can clearly be extended from extreme
  points to feasible bases, removing
non-Hamiltonian extreme points of Type 4 may be interpreted as the efficiency of the wedge
constraints. However, the wedge constraints may also add new extreme points. In order to fully
investigate the efficiency of the wedge constraints, we need to determine the structure and
prevalence of feasible bases of these new extreme points of the polytope $\whBeta(G)$.
Analogous to \citet{eshragh2011hybrid}, we introduce the concept of quasi-Hamiltonian bases
for $\whBeta(G)$ as follows.

\begin{definition}
  Let $\vect{x} =(x_{ij})\in\reals^{\lvert E\rvert}$ be an extreme point of $\whBeta(G)$. The
  extreme point $\vect x$ is called \emph{quasi-Hamiltonian} if any path
  $i_1 \rightarrow i_2 \rightarrow \dots\rightarrow i_n \rightarrow i_{n+1}$, where
  $i_{k+1} \in \arg\max_{j \in \mathcal{N^+}(i_k)}\{ x_{i_k j} \}$ for $k = 1,\dots, n$ is a
  Hamiltonian cycle in $G$. In this case, any feasible basis corresponding to $\vect x$ is called a
  \emph{quasi-Hamiltonian basis}.
\end{definition}
As an example, consider the following quasi-Hamiltonian extreme point of the polytope $\whBeta(K_6)$ for $\beta=0.999$:
\begin{multline*}
	 x_{12} = 1.00000,\ x_{26} = 0.999000,\ x_{34} = 0.994013;\ x_{36} = 0.000997,\ x_{45} =
         0.995010,\   
	 x_{51} = 0.995010,\\ x_{54} = 0.001993,\ x_{63} = 0.996006,\ x_{65} = 0.00299101, 
\end{multline*}
and all the other $x_{ij}$ are zero. We can easily see that this extreme point 
corresponds to the Hamiltonian cycle
$1\rightarrow 2\rightarrow 6\rightarrow 3\rightarrow 4\rightarrow 5\rightarrow 1$. Any feasible
basis of the polytope $\mathcal{WH}_\beta(K_6)$ corresponding to this particular extreme point is
called a quasi-Hamiltonian basis.

\begin{remark}
  Evidently, every Hamiltonian extreme point in the polytope $\hBeta(G)$ is also a quasi-Hamiltonian
  extreme point in the polytope $\whBeta(G)$. Furthermore, quasi-Hamiltonian bases increase the pool
  of desirable feasible bases.
\end{remark}

In the next section, we develop two algorithms to study the effectiveness of the wedge constraints
in removing non-Hamiltonian bases and the role of the discount parameter $\beta$. We present
promising numerical results regarding the efficiency of the modified polytope $\whBeta(G)$ over
$\hBeta(G)$, in the sense that the prevalence of quasi-Hamiltonian bases among all feasible bases of
$\whBeta(G)$ appears much greater than the prevalence of Hamiltonian bases among all feasible bases
of $\hBeta(G)$. This means that, based on our numerical evidence, the wedge constraints do not add
an excessive number of non-Hamiltonian bases.


\section{Computational results.}\label{section:numerical_result}
The polytopes $\hBeta(G)$ and $\whBeta(G)$ introduced in sections~\ref{section:formulation} and~\ref{section:reducing_region}, respectively, can be used as the basis of a search algorithm for HCP
(Algorithm~\ref{alg:HC_search}). As indicated earlier, one key improvement to enhance the efficiency of such an
algorithm would be to increase the ratio of Hamiltonian bases to all feasible bases in the
polytopes. Corollary~\ref{cor:ratio_nHExt4_totalExt} indicates that, on average, the vast majority of
feasible bases of the polytope $\hBeta(G_{n,p})$ are non-Hamiltonian bases of Type 4 and therefore the number of
Hamiltonian bases is not sufficiently large for our
purposes. Theorem~\ref{thm:cutting_off_nonH_Ext} show
that the wedge constraints cut off three types of non-Hamiltonian extreme points of $\hBeta(G)$. However, they may
also introduce a number of additional extreme points and consequently new feasible bases to the polytope $\whBeta(G)$. In order to compare the polytopes $\hBeta(G)$ and
$\whBeta(G)$, we develop two random walk algorithms on feasible bases of the two polytopes.

A random walk algorithm to find one Hamiltonian cycle is Algorithm~\ref{alg:role_of_beta}. More
precisely, this algorithm runs a simple uniform random walk on the feasible bases of the polytope
$\whBeta(G)$ for an input Hamiltonian graph $G$ and stops when it finds a quasi-Hamiltonian basis
and reports the total number of steps to reach that basis.
\begin{algorithm*}
  \caption{}\label{alg:role_of_beta}
  \begin{algorithmic}[1]    
    \State \textbf{Input:} A graph $G$ and a positive integer MaxStep
    \State Step $\leftarrow$ 0
    \State Let $B$ be an initial feasible basis for the polytope $\whBeta(G)$
    \While{$B$ is not quasi-Hamiltonian}
    \State Step $\leftarrow$ Step+1
    \State $B\ \leftarrow$ a basis chosen uniformly at random from the feasible bases adjacent to $B$ 
    \EndWhile
    \State \textbf{Output:} Step (the number of steps before finding a quasi-Hamiltonian feasible basis)
  \end{algorithmic}
\end{algorithm*}

The random walk step in Line 6 of Algorithm~\ref{alg:role_of_beta} is implemented as follows. The
bases adjacent to $B$ are given by pairs of a leaving variable and an entering variable chosen from the set of basic variables and non-basic variables, respectively. Each of
these pairs is checked for feasibility, and the feasible ones are listed together as the set of feasible bases adjacent to $B$. Finally, one of the
adjacent feasible bases is chosen uniformly at random.

We implemented Algorithm~\ref{alg:role_of_beta} in Matlab R2015b and tested it on a range of Hamiltonian binomial
random graphs $\bar{G}_{n,p}$. The latter are merely binomial random graphs $G_{n,p}$ augmented by 
insertion (if necessary) of arcs corresponding to one randomly generated Hamiltonian cycle, thereby
ensuring the Hamiltonicity of $\bar{G}_{n,p}$. In Table~\ref{table:No.of_iteration_findHC}, we report
the performance of Algorithm~\ref{alg:role_of_beta} on several of these graphs with
$p = 3/n$, $n = 10,\dotsc,70$ and $\beta = 0.999$.

\begin{table}	
\centering
\begin{minipage}{.6\textwidth}
	\centering
\caption{No. of steps required to find one Hamiltonian cycle}\label{table:No.of_iteration_findHC}
  \begin{tabular}{ c r }\toprule  	
    Random Graph &  No. of Steps \\  \midrule
    $\bar{G}_{10,3/10}$  & $6$ \\
    $\bar{G}_{20,3/20}$  &  $454$\\
    $\bar{G}_{30,3/30}$  & $1358$ \\
    $\bar{G}_{40,3/40}$  & $5714$\\
    $\bar{G}_{50,3/50}$  & $4392$\\
    $\bar{G}_{60,3/60}$  & $6230$\\
    $\bar{G}_{70,3/70}$  & $6387$\\ \bottomrule
  \end{tabular} 
	\end{minipage}
\end{table}
It is important to note the slow growth of the required iterations with increases in $n$. For
NP-complete problems, such as HCP, doubling or tripling $n$ would normally dramatically increase
the number of iterations. Furthermore, we emphasize the crucial role of the wedge constraint and the
parameter $\beta$.

We modified Algorithm~\ref{alg:role_of_beta} slightly by replacing $\whBeta(G)$ and
quasi-Hamiltonian with
$\hBeta(G)$ and Hamiltonian, respectively, and running it on the same $\bar{G}_{n,p}$ graphs reported in
Table~\ref{table:No.of_iteration_findHC}. In all cases, we failed to find Hamiltonian cycles in
$1,000,000$ steps. Furthermore, all one million of the non-Hamiltonian basis
identified in the process were of Type 4. This is consistent with
Corollary~\ref{cor:ratio_nHExt4_totalExt} and Remark~\ref{rem:expected_num_type4}.

Another interesting feature of the polytope $\whBeta(G)$ is the role played by the parameter
$\beta$. Theorem~\ref{thm:cutting_off_nonH_Ext} implies that the wedge constraints can be efficient
when $\beta$ is sufficiently close to one. In order to test this result numerically, we implemented
Algorithm~\ref{alg:role_of_beta} on $\bar{G}_{30,3/30}$ for various values of $\beta$. We
were interested in determining how increases in the parameter $\beta$ would affect the search for
quasi-Hamiltonian bases. The results are given in Table~\ref{table:role_of_beta}. The first
column contains the chosen values of $\beta$, while the second column shows the number of steps
that the algorithm took to find a quasi-Hamiltonian basis. When
Algorithm~\ref{alg:role_of_beta} did not find a quasi-Hamiltonian basis in $30,000$
steps, we reported `fail'. Table~\ref{table:role_of_beta} demonstrates the crucial role of $\beta$ in the polytope $\whBeta(G)$ where fewer steps are required to find a quasi-Hamiltonian  basis for larger values of $\beta$. These numerical experiments are aligned with theoretical results stated in Theorem~\ref{thm:cutting_off_nonH_Ext}.
\begin{remark}
  Analogous to the proof of Theorem 4.3 in \cite{eshragh2011hamiltonian}, we can show that if a
  given graph $G$ is Hamiltonian, then Algorithm~\ref{alg:role_of_beta} almost surely converges to a
  quasi-Hamiltonian basis in a finite number of steps.
\end{remark}

Next we wished to demonstrate, numerically, the prevalence of quasi-Hamiltonian bases in the random polytope 
$\whBeta(\bar{G}_{n,p})$. Hence, Algorithm~\ref{alg:role_of_beta} has been modified as described in Algorithm~\ref{alg:wedge_efficiency}.
\begin{algorithm}[htb]
	\caption{}\label{alg:wedge_efficiency}
	\begin{algorithmic}[1]
		\State \textbf{Input:} A graph $G$ and a positive integer MaxStep		
		\State Step $\leftarrow$ 0
		\State Counter $\leftarrow$ 0
		\State $B\ \leftarrow$ an initial feasible basis for the polytope $\whBeta(G)$
		\While{Step $\leq$ MaxStep}
		\State $B\ \leftarrow$ a basis chosen uniformly at random from the feasible bases adjacent to $B$
		\If { the current feasible basis is a quasi-Hamiltonian}
		\State Counter $\leftarrow$ Counter$+1$
		\EndIf
		\State Step $\leftarrow$ Step$+1$
		\EndWhile
                \State \textbf{Output:} Counter (the number of visited quasi-Hamiltonian feasible bases)
	\end{algorithmic}
\end{algorithm}

Thus, Algorithm~\ref{alg:wedge_efficiency} counts the number of quasi-Hamiltonian bases
visited during a prescribed number of steps, say MaxStep, for an input graph $G$. We ran this algorithm for
$100,000$ steps on the same seven $\bar{G}_{n,p}$ graphs as reported above and with
$\beta=0.999$. The results are summarized in Table~\ref{table:prevalence_q-Hfb}, where the second
column lists the number of visited feasible bases (out of $100,000$) that were
quasi-Hamiltonian. These numbers are remarkably high and appear to be a direct consequence of both
the nature of wedge constraints and the high value of $\beta$. This is consistent with
Theorem~\ref{thm:cutting_off_nonH_Ext} and suggests that many of the additional extreme points
created by wedge constraints may be quasi-Hamiltonian.

\begin{table}[htb]
  \begin{minipage}[t]{.4\textwidth}
  \centering  
  \caption{Dependence of Algorithm~\ref{alg:role_of_beta} on $\beta$ for the input graph  
  	$\bar{G}_{30,3/10}$}\label{table:role_of_beta}
  \begin{tabular}{c r} \toprule
    Value of $\beta$ & No. of Steps\\ \midrule
    $0.5$&fail\\
    $0.6$&fail\\
    $0.7$&fail\\
    $0.8$&fail\\
    $0.9$&fail\\
    $0.99$&$1944$\\
    $0.995$&$1139$\\
    $0.999$&$1001$\\
    $0.9995$&$84$\\ 
    $0.9999$&$23$\\ \bottomrule 
  \end{tabular}
\end{minipage} 
\hfill
\begin{minipage}[t]{.4\textwidth}
  \centering
  \caption{Number of quasi-Hamiltonian bases visited by Algorithm~\ref{alg:wedge_efficiency}}\label{table:prevalence_q-Hfb}
  \begin{tabular}{c r}\toprule
    Random Graph &  No. of Q-HBs\\ \midrule    
    $\bar{G}_{10,3/10}$  &  $70,197$\\
    $\bar{G}_{20,3/20}$  &  $47,897$\\
    $\bar{G}_{30,3/30}$  & $6629$ \\
    $\bar{G}_{40,3/40}$  & $34,434$\\
    $\bar{G}_{50,3/50}$  & $19,472$\\
    $\bar{G}_{60,3/60}$  & $1790$\\ 
    $\bar{G}_{70,3/70}$  & $2863$\\ \bottomrule
  \end{tabular}  
\end{minipage}
\hspace*{1cm}
\end{table}


\section{Conclusion}\label{sec:conclusion}
This paper should be seen as a continuation of results reported in \citet{feinberg2000constrained},
\citet{ejov2009refined}, \citet{eshragh2011hybrid} and \citet{eshragh2011hamiltonian}. The original
idea of exploiting the discounted occupational measures in MDP in which a graph $G$ is embedded led
to algorithmic insights only after the structure of the associated polytope $\hBeta(G)$ had been
examined in more detail. In particular, the analysis of extreme points of $\hBeta(G)$ presented in
\cite{ejov2009refined,eshragh2011hamiltonian} necessitated a more precise analysis of their
corresponding feasible bases (Proposition~\ref{prop:Hamilton_basis} and
Theorem~\ref{thm:characterising_nonHamiltonian_bases}). The subsequent analysis of the expected
prevalence of the five types of feasible bases in the random polytope $\hBeta(G_{n,p})$
(Theorem~\ref{thm:Expected_Num_Ext}) exposes the essential difficulty of HCP by demonstrating the
asymptotic dominance of non-Hamiltonian bases of Type 4
(Corollary~\ref{cor:ratio_nHExt4_totalExt}). On the positive side, it follows from
Theorem~\ref{thm:Expected_Num_Ext} that, asymptotically, the expected number of Hamiltonian bases is
twice that of non-Hamiltonian bases of Type 1 in the random polytope $\hBeta(G_{n,p})$. This is
important in view of Theorem~\ref{thm:cutting_off_nonH_Ext} (proved in
\cite{eshragh2011hamiltonian}), which suggests that adding the wedge constraints and thereby
converting $\hBeta(G)$ to $\whBeta(G)$ results in a polytope where searches for quasi-Hamiltonian
bases may be more effective. Recall that in Section~\ref{section:formulation} we noted that the
random walk based approach leads to an efficient algorithm provided (i) there are sufficiently many
feasible bases corresponding to Hamiltonian cycles, and (ii) the convergence of the random walk to
the uniform distribution is sufficiently fast. While Corollary~\ref{cor:ratio_nHExt4_totalExt} is
bad news regarding condition (i) for $\hBeta(G)$, the numerical results of Section~\ref{section:numerical_result} indicate
that the situation is much better for $\whBeta(G)$. More precisely, we make the following
conjecture on the feasible bases of the random polytope $\whBeta(\bar{G}_{n,p})$.
\begin{conjecture}
  There exist positive constants $c$, $\delta$ and $k$, such that for all values $\beta\in (1-e^{-cn},1)$,
  with high probability, the expected proportion of feasible bases of $\whBeta(\bar{G}_{n,p})$ that are
  quasi-Hamiltonian is at least $\delta/n^k$.
\end{conjecture}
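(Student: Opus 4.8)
The plan is to follow the template of Section~\ref{section:prevelance}: write the two relevant quantities --- the expected number of quasi-Hamiltonian feasible bases, and the expected total number of feasible bases, of the random polytope $\whBeta(\bar{G}_{n,p})$ --- as counting quantities over a fixed Hamiltonian host graph, and then bound their ratio. Exactly as in the derivation of Equation~\eqref{EquExp}, linearity of expectation reduces the problem to counting, since every potential feasible basis of $\whBeta$ occurs in $\whBeta(\bar{G}_{n,p})$ with probability equal to the probability that its fixed set of underlying arcs is present; after conditioning on a high-probability event on which $\bar{G}_{n,p}$ has the typical number of arcs, degrees and Hamiltonian cycles, it therefore suffices to estimate the relevant counts over such a graph. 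Throughout, ``feasible basis'' refers to a basis of the standard-form linear program obtained from $\whBeta(G)$ by adjoining a slack variable to each of the $2(n-1)$ wedge inequalities; such a basis is specified by its arc-columns together with the subset of wedge inequalities it makes tight.

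For the lower bound, recall that for $\beta<1$ every Hamiltonian extreme point of $\hBeta(G)$ is also an extreme point of $\whBeta(G)$, with exactly two wedge inequalities active --- at the successor and the predecessor of node~$1$ along the cycle --- and the remaining ones slack. Accounting for the resulting degeneracy among the slack variables, each Hamiltonian cycle of the host graph still yields a polynomial-in-$n$ number of feasible bases of $\whBeta$, all of which are quasi-Hamiltonian; hence the number of quasi-Hamiltonian feasible bases of $\whBeta(\bar{G}_{n,p})$ is at least a polynomial multiple of the number of Hamiltonian cycles of $\bar{G}_{n,p}$ (for $K_n$, of order $n!$). One can enlarge this pool by also counting the genuinely perturbed quasi-Hamiltonian extreme points --- those whose support is a Hamiltonian cycle together with a few low-weight detour arcs, as in the worked $\whBeta(K_6)$ example --- which remain quasi-Hamiltonian because the arg-max out-arc at every node is still the cycle arc; but the surviving Hamiltonian bases alone suffice for the lower bound.

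The upper bound is the crux. By Theorem~\ref{thm:cutting_off_nonH_Ext}, for $\beta$ in the stated window the only extreme points that $\whBeta(G)$ shares with $\hBeta(G)$ are Hamiltonian or of Type~1, and by parts~(\ref{item:expectation_type_0})--(\ref{item:expectation_type_1}) of Theorem~\ref{thm:Expected_Num_Ext} these account for only $n^{O(1)}\cdot n!$ feasible bases; in particular the catastrophic count of order $n^{n}$ from part~(\ref{item:expectation_type_4}) is absent. The reason is visible already from the lower wedge bound: since $\sum_{j\in\outi}x_{ij}\ge\beta^{n-1}>0$ for every node $i\ne 1$, the support of any feasible basis of $\whBeta(G)$ has positive out-degree at every node, and so cannot contain the separate zero-flow components that produced the $2^{n-4}(n-3)^{n-5}$ factor for $\hBeta(G)$; indeed, the support is forced to be a connected spanning subgraph containing node~$1$. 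This alone is not sufficient --- connected spanning subgraphs with $n+1$ arcs still number roughly $n^{n+1}$ --- so the remaining and genuinely difficult step is to bound the \emph{new} feasible bases (those at which some wedge inequality is tight) by $n^{O(1)}\cdot n!$. Here I would exploit that, as $\beta\to1$ along the window $\beta>1-e^{-cn}$, the arc weights $A_B^{-1}\vect b$ of a feasible basis converge to the undiscounted solution, at which the wedge interval $[\beta^{n-1},\beta]$ collapses to the single value~$1$; so a support can carry a wedge-feasible basis for $\beta$ in this window only if its undiscounted arc weights are nonnegative and every out-sum equals~$1$, a rigidity condition which --- via a threshold argument in the spirit of the proof of Theorem~\ref{thm:cutting_off_nonH_Ext}, now applied to bases rather than extreme points --- should pin the support to a Hamiltonian cycle with only $O(1)$ local modifications, leaving at most $n^{O(1)}\cdot n!$ such bases. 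Making this rigidity quantitative, and in particular pinning down the precise rate $e^{-cn}$ at which the window must shrink, is the part I expect to demand the most work.

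Combining the estimates, the lower bound of the second paragraph and the upper bound of the third give a proportion of quasi-Hamiltonian feasible bases of at least $\Omega(n^{-k})$ over the host graph for a suitable constant $k$. Transferring this through the linearity-of-expectation identity and the high-probability conditioning on $\bar{G}_{n,p}$ then yields the claimed lower bound $\delta/n^{k}$ on the expected proportion of quasi-Hamiltonian feasible bases of $\whBeta(\bar{G}_{n,p})$, with high probability.
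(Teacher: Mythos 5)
The statement you are trying to prove is not proved in the paper at all: it is stated as an open conjecture, supported only by the numerical experiments of Section~\ref{section:numerical_result}, and the authors explicitly write that proving it would be a significant further step. So there is no proof of record to compare against, and your proposal must stand on its own. It does not: the decisive step is missing, and it is missing in exactly the place the authors identify as the open problem. Theorem~\ref{thm:cutting_off_nonH_Ext} only controls the extreme points that $\whBeta(G)$ \emph{inherits} from $\hBeta(G)$; it says nothing about the new extreme points and feasible bases created by making wedge inequalities tight, and Section~\ref{section:reducing_region} states explicitly that determining the structure and prevalence of these new bases is precisely what is not yet known. Your third paragraph acknowledges this and offers only a heuristic ``rigidity'' argument in its place.

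That heuristic does not close the gap. In the window $\beta>1-e^{-cn}$ the wedge interval $[\beta^{n-1},\beta]$ has width of order $ne^{-cn}$, which is small but nonzero, and the basic solution $A_B^{-1}\vect b$ also varies with $\beta$ at a comparable scale; nothing in the paper (or in your sketch) shows that the perturbation of the solution dominates, or is dominated by, the shrinking of the interval, so the claimed collapse to the undiscounted rigidity condition is not justified. Even granting it, you would still need to show that among the roughly $n^{n+1}$ connected spanning subgraphs with $n+1$ arcs, only $n^{O(1)}\cdot n!$ admit a wedge-feasible basis; you give no mechanism for this beyond the phrase ``$O(1)$ local modifications of a Hamiltonian cycle,'' which is an assertion of the conjecture's content rather than an argument for it. There is also a secondary issue in the transfer step: the conjecture concerns the proportion of quasi-Hamiltonian bases (with high probability over $\bar{G}_{n,p}$), whereas the Section~\ref{section:prevelance} machinery you invoke computes a ratio of expectations; passing from one to the other requires concentration estimates you do not supply. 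In short, your lower bound (Hamiltonian cycles survive as quasi-Hamiltonian bases) is sound and consistent with the paper, but the upper bound on the total number of feasible bases of $\whBeta(\bar{G}_{n,p})$ is the entire difficulty, and it remains unproved.
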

This conjecture states that there are sufficiently many quasi-Hamiltonian feasible bases in the polytope $\whBeta(G)$ for large values of $\beta$. This statement was supported by numerical results reported in Section~\ref{section:numerical_result}. Proving this conjecture would be a big step towards an efficient variant of
Algorithm~\ref{alg:HC_search} by sampling feasible bases of $\whBeta(G)$ instead of extreme points of
$\hBeta(G)$.

\section*{Acknowledgments.} The authors are grateful to Professor Michael Saunders, an anonymous reviewer and the Area Editor for their invaluable comments that helped improve the previous version of this paper. The second author wishes to acknowledge the support from the Australian Research Council
under grants DP150100618 and DP160101236.

\bibliographystyle{plainnat} 
\bibliography{biblio}

\end{document}